\documentclass[a4paper,10pt]{amsart}

\usepackage[english]{babel}
\usepackage[T1]{fontenc}
\usepackage[utf8]{inputenc}
\usepackage{lmodern}
\usepackage{amsmath,amssymb}
\usepackage{enumitem}
\usepackage{xcolor}
\usepackage{tikz}
\definecolor{DarkRed}{RGB}{173,0,0}
\definecolor{LightRed}{RGB}{201,0,0}
\usepackage[
  colorlinks=true,
  linkcolor=DarkRed,
  urlcolor=LightRed,
  citecolor=LightRed,
  pdfpagemode=UseNone,
  pdfstartview=FitH,
  pdftitle={Unirational Fano threefolds from the CLOP construction},
  pdfauthor={Alex Massarenti},
  pdfkeywords={Fano threefolds, unirationality, del Pezzo fibrations, relative Jacobians, sextic double solids, quartic threefolds, double quadrics}
]{hyperref}

\numberwithin{equation}{section}
\theoremstyle{plain}
\newtheorem{thm}[equation]{Theorem}
\newtheorem{Lemma}[equation]{Lemma}
\newtheorem{Proposition}[equation]{Proposition}
\newtheorem{Construction}[equation]{Construction}

\theoremstyle{definition}
\newtheorem{Definition}[equation]{Definition}
\newtheorem{Remark}[equation]{Remark}

\newtheoremstyle{introthmstyle}
  {5pt}{5pt}{\itshape}{}{\bfseries}{.}{ }{}
\theoremstyle{introthmstyle}
\newtheorem{introthm}{Theorem}

\setlist[enumerate]{label=(\roman*),before=\normalfont,font=\normalfont}
\renewcommand{\P}{\mathbb P}
\newcommand{\C}{\mathbb C}
\newcommand{\QQ}{\mathbb Q}
\newcommand{\ZZ}{\mathbb Z}
\newcommand{\OO}{\mathcal O}
\DeclareMathOperator{\Pic}{Pic}
\DeclareMathOperator{\Bl}{Bl}
\DeclareMathOperator{\Gal}{Gal}
\DeclareMathOperator{\MW}{MW}
\DeclareMathOperator{\Hess}{Hess}
\DeclareMathOperator{\Aut}{Aut}
\DeclareMathOperator{\Bir}{Bir}
\DeclareMathOperator{\Cl}{Cl}
\DeclareMathOperator{\Sing}{Sing}

\DeclareMathOperator{\mult}{mult}
\DeclareMathOperator{\id}{Id}
\DeclareMathOperator{\GL}{GL}
\DeclareMathOperator{\Spec}{Spec}
\DeclareMathOperator{\CLOP}{CLOP}

\begin{document}

\title{Unirational Fano threefolds}

\author[Alex Massarenti]{Alex Massarenti}
\address{\textsc{Alex Massarenti}\\ Dipartimento di Matematica e Informatica, Universit\`a di Ferrara, Via Machiavelli 30, 44121 Ferrara, Italy}
\email{msslxa@unife.it}

\date{\today}
\subjclass[2020]{Primary 14J45, 14E08; Secondary 14J26, 14J30, 14G05}
\keywords{Fano threefolds, unirationality, del Pezzo fibrations, relative Jacobians, sextic double solids, quartic threefolds, double quadrics}

\begin{abstract}
We use the relative Abel--Jacobi construction of Cheltsov--Loginov--Orlov--Prokhorov to study unirational Fano threefolds. Our main result is a $50$-dimensional family of smooth unirational del Pezzo threefolds of degree one. We also construct explicit one-parameter families of terminal unirational sextic double solids and quartic threefolds of Picard rank one. Finally, we reinterpret the Roth--Iskovskikh unirationality construction for smooth double quadric threefolds in terms of a correspondence induced by the relative Abel--Jacobi map.
\end{abstract}

\maketitle
\tableofcontents

\section*{Introduction}\label{sec:intro}

Let $X$ be a smooth Fano threefold over an algebraically closed field of characteristic zero. If $\rho(X)=1$, then $X$ belongs to one of the $17$ deformation families of smooth Fano threefolds of Picard rank one. It is a del Pezzo threefold if $-K_X=2H$ for an ample Cartier divisor $H$, and its degree is $d=H^3$. 

Unirationality is known for the smooth members of these families with only a few exceptions. The remaining cases are quartic threefolds in $\P^4$, sextic double solids, and del Pezzo threefolds of degree one. For quartic threefolds, the unirationality of an arbitrary smooth member is still open, although B. Segre constructed smooth unirational examples \cite{Seg60}. Prior to the present work, no smooth unirational example was known for either sextic double solids or degree-one del Pezzo threefolds. The present paper settles the latter existence problem by producing smooth unirational degree-one del Pezzo threefolds. In particular, as observed in \cite{CTZ25}, among Fano threefolds of index at least two the only remaining case was that of degree-one del Pezzo threefolds.

Every smooth del Pezzo threefold of degree one is a sextic hypersurface
$X_6\subset\P(1,1,1,2,3)$, where the three weight-one coordinates generate
$H^0(X,H)$. The purpose of this paper is to construct smooth unirational
threefolds of this type.

The starting point is the construction of Cheltsov--Loginov--Orlov--Prokhorov \cite{CLOP26}. Let $Z$ be a smooth del Pezzo surface of degree at least two and let $\mathcal P\subset|-K_Z|$ be an admissible pencil in the sense of \cite[Definition~3.1]{CLOP26}. Blowing up its base locus gives a genus-one fibration whose relative Jacobian is a rational elliptic surface. Contracting its zero section produces a smooth del Pezzo surface $S$ of degree one. If $A$ is ample on $Z$ and $n=A\cdot(-K_Z)$, the relative Abel--Jacobi map $Z\dashrightarrow S$ has degree $n^2$ \cite[Lemma~3.4]{CLOP26}.

For a smooth del Pezzo surface $S$ of degree one, the lattice $K_S^\perp$ is $E_8(-1)$. A line $E\subset S_{\bar k}$ determines the root $[E]+K_S$, and its strict transform on the blow-up of the anticanonical base point is a root section of the associated rational elliptic surface. The following result characterizes the degree-two case of the construction above.

\begin{introthm}\label{intro:root}
Let $k$ be a perfect field of characteristic different from $2$ and $3$, and let $S$ be a smooth del Pezzo surface of degree one over $k$. Then $S$ is obtained from the relative Jacobian of an admissible anticanonical pencil on a smooth del Pezzo surface of degree two over $k$ if and only if $S$ carries a quadratic root line in the sense of Definition~\ref{def:rootline}. In this case there is a dominant rational map $Z\dashrightarrow S$ of degree four from a smooth del Pezzo surface $Z$ of degree two.
\end{introthm}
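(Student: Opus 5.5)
The plan is to make both directions explicit through the rational elliptic surface $Y=\Bl_{p}S$, where $p$ is the base point of $|-K_S|$. Throughout I read Definition~\ref{def:rootline} as follows: a quadratic root line is a Galois orbit $\{E,E'\}$ of two lines on $S_{\bar k}$, defined over a quadratic extension, with $E\cdot E'=3$. Equivalently, the roots satisfy $r'=-r$ for $r=[E]+K_S$ and $r'=[E']+K_S$, since $[E]+[E']=-2K_S$. Their strict transforms $\sigma,\sigma'$ on $Y$ are sections, and in the Mordell--Weil group of the generic fibre they are the two opposite points $\pm P$ of a root of $E_8$.

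For the direction from $Z$ to $S$, let $Z$ be a degree-two del Pezzo surface with an admissible pencil $\mathcal P\subset|-K_Z|$. Its base locus consists of two points, defined over $k$ as a degree-two subscheme. Blowing them up gives $\tilde Z\to\P^1$, a genus-one fibration whose two exceptional curves $F_1,F_2$ are Galois-conjugate sections. The relative Jacobian $Y=J(\tilde Z/\P^1)$ is then rational elliptic with zero section $O$, and by \cite{CLOP26} its contraction is $S$. The difference $F_1-F_2$ defines a section $\sigma$ of $Y$, and $F_2-F_1=-\sigma$ is its Galois conjugate. I would show that $\sigma$ is a root section by computing its height, $h(\sigma)=2+2\,\sigma\cdot O-\text{(fibre contributions)}$. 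Admissibility should force the fibres to be irreducible, so the correction term vanishes, and the intersection should come out as $\sigma\cdot O=0$. The way to see this is through the Abel--Jacobi identification: $\sigma$ meets $O$ exactly over those fibres $C$ where $F_1|_C\sim F_2|_C$. That would mean the two base points coincide on a smooth fibre, which is excluded, while nodal fibres need a separate check. Height $2$ makes $\sigma$ and $-\sigma$ minimal sections, and on $S$ these are lines $E,E'$ with $E+E'\sim-2K_S$. Their conjugacy gives the quadratic root line.

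For the converse, start from a quadratic root line $\{E,E'\}$ and form $Y=\Bl_pS$ with sections $\pm\sigma$. Consider the torsor class of a genus-one fibration $\tilde Z\to\P^1$ with Jacobian $Y$. The cocycle $\Gal(K/k)\to\MW(Y_{\bar k})$ should send the nontrivial element to $\sigma$; this satisfies the cocycle condition because $\tau(\sigma)=-\sigma$. The resulting twisted form $\tilde Z$ of $Y$ has no rational section but carries a degree-two multisection, namely the orbit $\{O,\sigma\}$. So I would construct $\tilde Z$ directly by twisting $Y$ through the translation by $\sigma$. In the twisted form, $O$ and the translate of $O$ by $\sigma$ become conjugate disjoint sections $F_1,F_2$, and their disjointness is once again the condition $\sigma\cdot O=0$. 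Next I would check that $F_1$ and $F_2$ are $(-1)$-curves and contract them over $k$ to a surface $Z$. Since $K_Z^2=K_{\tilde Z}^2+2=2$ and $-K_Z$ is ample, $Z$ is a degree-two del Pezzo surface. The fibration descends to a pencil in $|-K_Z|$ whose base points are the images of $F_1,F_2$, and it remains to verify that this pencil is admissible in the sense of \cite[Definition~3.1]{CLOP26} and that its Jacobian gives back $S$.

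The degree statement then follows from \cite[Lemma~3.4]{CLOP26}. Taking $A=-K_Z$ gives $n=K_Z^2=2$, so the Abel--Jacobi map $Z\dashrightarrow S$ has degree $n^2=4$.

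I expect the main obstacle to be the converse construction, namely building $\tilde Z$ over $k$, not merely over $\bar k$, so that it has smooth total space and the contraction yields a del Pezzo surface. This needs a check that the twisted form introduces no $(-2)$-curves, i.e.\ that $-K_Z$ stays ample. The route I would try first is the explicit model: $S$ as a sextic in $\P(1,1,2,3)$, and $Z$ as a double cover of $\P^2$ branched along a quartic. A quadratic root line corresponds to a conjugate pair of bitangent-type data, and one can write the inverse map by hand. The hypothesis $\mathrm{char}\,k\neq2,3$ is needed for the Weierstrass normal forms used in this step.
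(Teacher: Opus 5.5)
Your strategy matches the paper's. In the forward direction, the second base point becomes a section disjoint from the zero section, and Galois acts on it by $-1$. Conversely, you twist $Y=\Bl_pS$ by translation by the root section composed with the Galois involution; your cocycle is exactly the paper's descent datum $t_\sigma\circ\tau$, which is an involution because $\tau(\sigma)=-\sigma$. You then contract the two conjugate disjoint sections and read off the degree from \cite[Lemma~3.4]{CLOP26}.

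The forward direction is correct, but the height formula and the fibrewise analysis of $F_1|_C\sim F_2|_C$ (including nodal fibres) are unnecessary. Over $L$, take $F_2$ as origin. This identifies $\widetilde Z_L$ with $Y_L$, sending $F_2$ to $O$ and $F_1$ to your section $\sigma=F_1-F_2$. Hence $\sigma\cdot O=F_1\cdot F_2=0$ at once. The image $E$ of $\sigma$ in $S_L$ is then a line, because $E^2=\sigma^2=-1$ and $(-K_S)\cdot E=(F+O)\cdot\sigma=1$, where $F$ is the fibre class.

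Your reading of Definition~\ref{def:rootline} also drops the split case $L=k$, which the definition admits with no further condition. In the forward direction, a pencil with $k$-rational base points yields a $k$-rational line. In the converse, the split case is your construction with the trivial cocycle, i.e.\ contract $O$ and $\sigma$ on $Y$ directly.

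The genuine gap is in the converse, at exactly the step you single out. You never show that $-K_Z$ is ample or that the descended pencil is admissible, and the fallback via explicit ``bitangent-type data'' is not an argument. The missing observation is that every member of $|-K_S|$ is integral. Since $(-K_S)^2=1$ and $-K_S$ is ample, any decomposition $\sum m_iD_i$ has a single component of multiplicity one. Moreover, every member has multiplicity one at the base point $p$. Hence every geometric fibre of $Y\to\P^1$ is irreducible, and $Y_{\bar k}$ contains no $(-2)$-curves at all.

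Ampleness can be checked over $\bar k$, where the twist is invisible. Let $q:Y_{\bar k}\to Z_{\bar k}$ contract $O$ and $\sigma$, so that $q^*(-K_Z)=F+O+\sigma$. For every horizontal strict transform $D$ one has $q^*(-K_Z)\cdot D\geq F\cdot D>0$, while $q^*(-K_Z)\cdot F=2$ and $(-K_Z)^2=2$. Nakai--Moishezon then gives ampleness. The same irreducibility gives admissibility: the base scheme is the two reduced points $q(O),q(\sigma)$, the generic member is smooth, and every geometric member is irreducible.

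What remains is routine:
\begin{enumerate}
\item Translation by a section extends to an automorphism of the whole relatively minimal model.
\item The descent is effective since $Y_L$ is projective; the paper exhibits the invariant ample class $2F+O+\sigma$.
\item The twist is smooth because $Y$ is.
\item Its relative Jacobian is $Y$ because translations act trivially on $\Pic^0$, so contracting the zero section gives back $S$.
\end{enumerate}
Your side claim that the twist has no rational section is unjustified, but it is also never needed.
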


Every smooth del Pezzo surface of degree two over $K=\C(t)$ is $K$-unirational. Indeed, weak approximation at places of good reduction gives a rational point outside the ramification curve and the generalized Eckardt locus, and the criterion of Salgado--Testa--V\'arilly-Alvarado then gives unirationality \cite{STVA14}. Thus Theorem~\ref{intro:root} gives a unirationality criterion for degree-one del Pezzo surfaces over $K$ and for threefolds fibered by them.

Our main application is the following family. Here $a_i$ is homogeneous of degree $i$ in $x_0,x_1,x_2$, while $h_6$ is a binary sextic in $x_0,x_1$.

\begin{introthm}\label{intro:family}
Let
\begin{equation}\label{eq:main-family}
X=\{w^2=z^3+a_2(x_0,x_1,x_2)z^2+a_4(x_0,x_1,x_2)z+h_6(x_0,x_1)\}
\subset\P(1,1,1,2,3).
\end{equation}
If $X$ is smooth, then $X$ is unirational.
\end{introthm}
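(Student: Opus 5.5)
The plan is to view $X$ as a del Pezzo fibration of degree one over $\P^1$ and apply Theorem~\ref{intro:root} to the generic fibre. Project from the weight-one coordinates $(x_0,x_1)$: the rational map $X\dashrightarrow\P^1$, $[x_0:x_1:x_2:z:w]\mapsto[x_0:x_1]$, is undefined along the base curve $\{x_0=x_1=0\}$. Blowing this curve up gives a fibration whose generic fibre $S_K$, with $K=\C(t)$ and $t=x_1/x_0$, is the sextic
\[
w^2=z^3+a_2(1,t,x_2)z^2+a_4(1,t,x_2)z+h_6(1,t)\subset\P_K(1,1,2,3)
\]
in the coordinates $x_0,x_2,z,w$. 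Since $X$ is smooth, this is a smooth del Pezzo surface of degree one over $K$. The threefold $X$ is birational to $S_K$, so it suffices to show that $S_K$ is $K$-unirational.

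The key feature of the family is that the constant term $h_6$ depends only on $x_0,x_1$. On the fibre, setting $x_0=1$, the term $h_6(1,t)$ is a constant in $K$. I will use this to exhibit a quadratic root line on $S_K$ in the sense of Definition~\ref{def:rootline}. The natural candidates are the curves $z=\lambda x_0^2$, $w=\mu x_0^3$ with $\lambda,\mu$ in a quadratic extension of $K$. Substituting gives
\[
\mu^2=\lambda^3+a_2\lambda^2+a_4\lambda+h_6 ,
\]
read as an identity in $x_2$ after setting $x_0=1$. Taking $\lambda=0$ makes the $x_2$-dependence disappear. The equation becomes $\mu^2=h_6(1,t)$, so $\mu=\pm\sqrt{h_6(1,t)}$, and this lies in the quadratic extension $K(\sqrt{h_6})$. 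This yields a conjugate pair of lines $\{z=0,\ w=\pm\sqrt{h_6}\,x_0^3\}$, which are exceptional curves, i.e.\ $(-1)$-curves meeting $-K$ once. Their sum is the Galois-stable divisor $\{z=0\}$, and it lies in $|{-2K_S}|$.

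The next step is to check that this pair satisfies the conditions of Definition~\ref{def:rootline}. Presumably these require two lines $E,E'$, conjugate over a quadratic extension, with $E+E'\sim-2K_S$, which is the pair related by the Bertini involution $w\mapsto-w$. This should be precisely the configuration for which the associated roots $\pm([E]+K_S)$ give a Galois-stable pair of root sections. I expect the verification to be routine lattice bookkeeping: $E\cdot E'=3$, $E+E'=-2K$, and the involution acts by $w\mapsto -w$. Once it is done, Theorem~\ref{intro:root} gives a smooth del Pezzo surface $Z_K$ of degree two over $K$ and a dominant map $Z_K\dashrightarrow S_K$ of degree four.

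Finally, as noted after Theorem~\ref{intro:root}, every smooth degree-two del Pezzo surface over $K=\C(t)$ is $K$-unirational. The argument is that Tsen's theorem and weak approximation give a point off the ramification curve and the generalized Eckardt locus, and then \cite{STVA14} applies. Composing the two maps shows that $S_K$ is $K$-unirational, hence $X$ is unirational.

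I expect the main obstacle to be matching the explicit line pair with the precise hypotheses of Definition~\ref{def:rootline}. If these include a genericity condition, for example that $h_6$ is not a square, or some admissibility condition on the resulting pencil, I would need to handle it separately. When $h_6(1,t)$ is a square, the lines are defined over $K$ itself. In that case I would try to show that $S_K$ is unirational directly: two rational $(-1)$-curves can be contracted, giving a del Pezzo surface of degree three with a rational point, which is unirational. Smoothness of $X$ should rule out degenerate cases, such as the pair of lines becoming nonreduced; nonreducedness would need $h_6\equiv0$, which makes $X$ singular.
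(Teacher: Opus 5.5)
Your argument is the paper's: the same pencil $\langle x_0,x_1\rangle$ and the same lines $E_\pm=\{z=0,\ w=\pm\sqrt{h}\,u^3\}$. From there you apply Theorem~\ref{intro:root}, then Proposition~\ref{prop:dp2-unirational}, then spread out over the base, exactly as in Proposition~\ref{prop:generic-fiber} and Theorem~\ref{thm:V1-criterion}. Your proposed check of the root-line condition works. It is equivalent to the paper's check that $(z,w)\mapsto(z,-w)$ is fibrewise inversion. Indeed, $E_++E_-=\{z=0\}\cap S\in|{-2K_S}|$ gives $[E_-]+K_S=-([E_+]+K_S)$. Since $\sigma$ swaps $E_\pm$ and the identification $K_S^\perp\simeq\MW$ is a Galois-compatible group homomorphism, this is precisely $\sigma(P_{E_+})=-P_{E_+}$.

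Two points need fixing.

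First, the inference ``since $X$ is smooth, $S_K$ is smooth'' is not automatic for a pencil. Bertini only gives smoothness away from the base curve $B=X\cap\{x_0=x_1=0\}$. At each singular point of $B$, exactly one member of the pencil is singular. So if $B$ were singular along a curve, the generic member could be singular. You need that $B$ has only finitely many singular points. This holds here because $B$ is $w^2=z^3+\alpha x_2^2z^2+\beta x_2^4z$ in $\P(1,2,3)$. This curve is integral, since the square of a weight-three form in $x_2,z$ has no $z^3$-term. Likewise, $h_6\neq0$ needs its one-line check. If $h_6=0$, then $\{z=w=0\}\subset X$, and $X$ is singular at the points of this plane where $a_4=0$.

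Second, no fallback is needed when $h_6(1,t)$ is a square, because Definition~\ref{def:rootline} imposes no condition when $L=k$. Moreover, the fallback you wrote is wrong. As you yourself computed, $E_+\cdot E_-=3$, so the two lines cannot be contracted simultaneously to a cubic surface. The correct direct argument contracts a single $K$-rational line to obtain a degree-two del Pezzo surface over $K$.
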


Consider the pencil generated by $x_0$ and $x_1$. Set $x_0=u$, $x_1=tu$, $x_2=v$, $K=\C(t)$, $A_i(u,v)=a_i(u,tu,v)$, and $h=h_6(1,t)$. Its generic member is $S= (w^2=z^3+A_2(u,v)z^2+A_4(u,v)z+hu^6)$. Over $K(\sqrt h)$ it contains the lines $E_\pm=\{z=0,\ w=\pm\sqrt h\,u^3\}$. Their strict transforms are mutually inverse sections of the anticanonical elliptic fibration, hence they give the quadratic root line required by Theorem~\ref{intro:root}.

The coefficient spaces have dimensions $6$, $15$, and $7$, so
\eqref{eq:main-family} is a $28$-dimensional affine coefficient family
before quotienting by weighted changes of coordinates. Smoothness is open,
and Section~\ref{sec:example} gives a smooth member. Acting on this family
with $\Aut(\P(1,1,1,2,3))$ produces a $50$-dimensional locus in the space
of sextic hypersurfaces. Since the space of sextic
hypersurfaces in $\P(1,1,1,2,3)$ is $\P^{63}$, this locus has codimension
$13$.

It is useful to compare our construction with the classical unirational quartic threefolds arising from Segre's method \cite{Seg60}. Segre's example is based on the presence of a rational quartic surface with separable asymptotics, and this construction was later developed to produce a $54$-dimensional family of smooth unirational quartic threefolds \cite{Mar06}. In this sense, the $50$-dimensional family constructed here may be viewed as an index-two analogue of the Segre family: in both cases unirationality is forced by imposing on the threefold a special geometric structure which persists in a positive-dimensional locus, while the general member of the ambient deformation family is not known to be unirational.

We next give two index-one applications, both in explicit one-parameter families. After restricting the parameter to a nonempty Zariski open subset, every member of either family is a terminal factorial Fano threefold of Picard rank one and is unirational. In the sextic double solid family the resulting threefolds are birational to degree-one del Pezzo fibrations and are dominated by $\P^3$ by maps of degree nine. For the distinguished member $X_2$ we also determine the singularity basket explicitly.

\begin{introthm}\label{intro:sextic}
For $\lambda\in\mathbb A^1$, let
\begin{equation}\label{eq:sextic-family}
\begin{aligned}
X_\lambda&=\{w^2=g_{6,\lambda}(x_0,x_1,x_2,x_3)\}\subset\P(1_{x_0},1_{x_1},1_{x_2},1_{x_3},3_w),\\
g_{6,\lambda}={}&4x_0(x_0^2-x_1^2)x_3^3
+4(x_0^2-x_1^2)\big((\lambda x_0+x_1)^2x_2-3x_0x_2^2\big)x_3-4x_0x_2^5+(\lambda x_0+x_1)^2x_2^4\\
&-18x_0(x_0+2x_1)(\lambda x_0+x_1)x_2^3
+\big(4(x_0+2x_1)(\lambda x_0+x_1)^3-27x_0^2(x_0+2x_1)^2\big)x_2^2\\
&-4x_0(x_0^2-x_1^2)^2x_2.
\end{aligned}
\end{equation}
There is a nonempty Zariski open subset $U_6\subset\mathbb A^1$ containing $2$ such that, for every $\lambda\in U_6$, the sextic double solid $X_\lambda$ is a terminal factorial Fano threefold with
$\Cl(X_\lambda)=\Pic(X_\lambda)=\ZZ[\OO_{X_\lambda}(1)]$ and $\rho(X_\lambda)=1$. Moreover, $X_\lambda$ is birational to a degree-one del Pezzo fibration and admits a dominant rational map $\P^3\dashrightarrow X_\lambda$ of degree nine. The member $X_2$ has singularity basket $cD_4+2cA_3+13cA_1$, twelve of the $cA_1$ points being ordinary double points.
\end{introthm}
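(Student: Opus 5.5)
The plan is to realize $X_\lambda$ as birational to a degree-one del Pezzo fibration over $\P^1$ whose generic fibre $S$ over $K=\C(t)$ carries a quadratic root line. Theorem~\ref{intro:root} then yields a degree-four map $Z\dashrightarrow S$ from a degree-two del Pezzo surface, and $Z$ is $K$-unirational by \cite{STVA14}. The fibration comes from the pencil of planes through the line $\ell=\{x_0=x_1=0\}$. Set $x_0=s$, $x_1=ts$, and keep $x_2,x_3$. The polynomial $g_{6,\lambda}$ is visibly cubic in $x_3$ with coefficients divisible by $x_0$. Blowing up along $\ell$ and projecting makes each fibre a double cover of the plane $\langle\ell,p\rangle$. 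After the weighted substitution that rewrites $g$ as a Weierstrass-type sextic in $\P(1,1,2,3)$, with $x_3$ becoming $z$ of weight two after dividing by $x_0$, the fibre is a degree-one del Pezzo surface $w^2=z^3+A_2z^2+A_4z+A_6$.

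The specific coefficients ($-27x_0^2(\dots)^2$, $4(\dots)^3$, $-18\dots$) look like a discriminant of a cubic. I expect this arrangement to force the form $A_6=h\cdot u^6$ on a suitable line $z=0$, up to a Tschirnhaus shift. That gives lines $E_\pm$ defined over $K(\sqrt h)$, exactly as in the discussion after Theorem~\ref{intro:family}. This is the quadratic root line.

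For the degree-nine statement, I will take $Z$ to be a degree-two del Pezzo fibration, i.e.\ a quartic double cover over $\P^1$. The total space of the associated degree-two del Pezzo fibration should be rational, and explicitly birational to $\P^3$ through a section of the conic-bundle or rational-point structure. Composing with the Abel--Jacobi correspondence gives a map of degree $n^2$ with $n=A\cdot(-K_Z)$. Degree nine corresponds to $n=3$, so it arises from a different choice of $A$. I would choose $A$ so that $A\cdot(-K_Z)=3$ and track that the relevant map from $\P^3$ is birational onto $Z$.

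Terminality, factoriality and $\rho=1$ for $\lambda$ in a Zariski open set come from semicontinuity. The work is to check these at $\lambda=2$ and then spread out. At $\lambda=2$ I will compute $\Sing(X_2)$ from the branch sextic surface $\{g_{6,2}=0\}$, which I expect to be laborious and computer-assisted. This should give one $D_4$ point, two $A_3$ points and thirteen $A_1$ points, twelve of them nodes. Isolated hypersurface double points are terminal cDV. For factoriality I will use Cheltsov's criterion for double solids: a sextic double solid with at most $\le 14$ nodes... more generally, I will verify $H^4$ defect zero via the count of conditions the singular points impose on degree-$3+\dots$ forms. Concretely, I will show the singular points impose independent conditions on sextics, or use the Clemens–Cynk defect formula. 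Then $\Cl=\Pic=\ZZ$. Openness in $\lambda$ holds because the singularity types and the defect are upper semicontinuous along the family of equisingular-or-better members.

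The main obstacles are the defect computation at $\lambda=2$ and identifying the correct degree-two model $Z$ with a degree-nine map. I would attempt the latter first via explicit Magma or Macaulay2 computations.
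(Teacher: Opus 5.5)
\textbf{The central step fails.} The generic fibre $S/K$ of the pencil $[x_0:x_1]$ carries no quadratic root line. So Theorem~\ref{intro:root} does not apply, and no shift of $z$ will put the constant term in the form $hu^6$.

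To see this, recall that for $\lambda\in U_6$ the surface $S$ is the $\CLOP$ surface of the cubic pencil $\langle f_{0,\lambda},f_{1,\lambda}\rangle$ on $\P^2_K$. Its base scheme is a single Galois orbit of nine points $Q_1,\dots,Q_9$ (Lemma~\ref{baseirr} and its extension to $U_6$). Galois acts on $\MW(J_{\bar K}/\P^1_{\bar K})\simeq E_8$ through a transitive subgroup $G\subset S_9$ that permutes the exceptional classes $Q_i$ and fixes $H$. Modulo $F$, the $240$ roots are $Q_i-Q_j$ and $\pm(H-Q_i-Q_j-Q_k)$. A Galois-stable pair $\{\pm\alpha\}$ would force $G$ to preserve a two- or three-element subset of the nine points, which contradicts transitivity. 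Note that the obstruction is this transitivity, not $\rho(S)=1$: a conjugate root pair is compatible with $\rho(S)=1$. Hence $S$ admits no degree-two $\CLOP$ presentation at all.

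\textbf{The degree-nine assertion points to the missing idea.} Any route through a degree-four map $Z\dashrightarrow S$ gives a composite whose degree is divisible by $4$. Your repair, an ample $A$ on a degree-two del Pezzo surface with $A\cdot(-K_Z)=3$, does not exist. Writing $A=aH-\sum b_iE_i$ on a blow-up of seven points, the conditions $A\cdot E\geq 1$ on the $56$ lines force $A\cdot E=0$ for some line $E$.

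The correct source is $Z=\P^2_K$ itself, with the cubic pencil above and $A=\OO(1)$. Then $n=A\cdot(-K_{\P^2})=3$, and the relative Abel--Jacobi map $\P^2_K\dashrightarrow S$ has degree nine. Since $\P^2_K$ is rational, no auxiliary unirationality theorem is needed. Spreading out over $\P^1$ and composing with the birational map to $X_\lambda$ gives $\P^3\dashrightarrow X_\lambda$ of degree nine.

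That birational map is not a blow-up of $\ell$; note that $g_{6,\lambda}$ vanishes identically on $x_0=x_1=0$. It is the weighted blow-up of the $cD_4$ point $[0:0:0:1:0]$ with weights $(2,2,1,3)$, followed by a flop, forming a Sarkisov link of type I.

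\textbf{Your factoriality plan also needs replacing.} The Clemens-type defect formula for a sextic double solid involves quintics rather than sextics, and it applies to nodes. But $X_2$ also has $cD_4$, $cA_3$ and a non-nodal $cA_1$ point. Moreover, factoriality is not automatically open in $\lambda$ when singularity types may change.

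The paper bypasses defects entirely:
\begin{enumerate}
\item $\rho(S)=1$ by \cite[Corollary~3.8]{CLOP26}.
\item All fibres of the global Weierstrass model $\mathcal Y\to\P^1$ are integral, which gives $\Cl(\mathcal Y)=\ZZ H\oplus\ZZ F$.
\item The flop preserves $\Cl$.
\item The final contraction kills exactly the prime divisor $E_\lambda\sim H$, which is independent of $\lambda$.
\end{enumerate}
Hence $\Cl(X_\lambda)=\Pic(X_\lambda)=\ZZ[\OO_{X_\lambda}(1)]$ on an open set of $\lambda$. Your plan to locate $\Sing(X_2)$ through the branch sextic does match the paper.
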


The quartics $X_{4,\kappa}$ of \eqref{eq:quartic-family} behave in the same way: after restricting to a nonempty Zariski open subset of the parameter line they are terminal factorial Fano threefolds of Picard rank one, unirational and birational to degree-one del Pezzo fibrations. For the distinguished member $X_{4,1}$ we determine the singularity basket explicitly.

\begin{introthm}\label{intro:quartic}
Fix $\iota\in\C$ with $\iota^2=-3$. For $\kappa\in\mathbb A^1$, let
\begin{equation}\label{eq:quartic-family}
\begin{aligned}
X_{4,\kappa}&=\{f_\kappa=0\}\subset\P^4_{[x_0:x_1:x_2:x_3:x_4]},\\
f_\kappa={}&x_0^2x_4^2-36\iota x_1^2x_2x_4-72\iota x_0x_2^2x_4+12\iota x_2^3x_4-x_0x_3^3-9x_1^2x_3^2-54x_0x_2x_3^2-9x_2^2x_3^2\\
&+324x_0x_1^2x_3-270x_1^2x_2x_3-945x_0x_2^2x_3-54x_2^3x_3+2916x_1^4-729\kappa^2x_0^3x_1\\
&+7776x_0x_1^2x_2-2997x_1^2x_2^2-5400x_0x_2^3+243x_2^4.
\end{aligned}
\end{equation}
There is a nonempty Zariski open subset $U_4\subset\mathbb A^1$ containing $1$ such that, for every $\kappa\in U_4$, the quartic $X_{4,\kappa}$ is a terminal factorial Fano threefold with
$\Cl(X_{4,\kappa})=\Pic(X_{4,\kappa})=\ZZ[\OO_{X_{4,\kappa}}(1)]$ and $\rho(X_{4,\kappa})=1$. Moreover, $X_{4,\kappa}$ is unirational and is birational to a fibration over $\P^1$ whose generic fiber is a smooth del Pezzo surface of degree one. The member $X_{4,1}$ has singularity basket $cD_4+2cA_2$.
\end{introthm}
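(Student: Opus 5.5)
Since $\kappa\in\mathbb A^1$ varies in a single family, all the claimed properties will be proved for the distinguished member $X_{4,1}$ and then spread to a nonempty Zariski open subset $U_4\ni 1$. Terminality, a fixed singularity type, factoriality and the existence of the fibration structure all behave well in flat families after shrinking the base.

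\textbf{Step 1: singularities of $X_{4,1}$.} I will compute the singular locus of $X_{4,1}$ from the Jacobian ideal. I expect three isolated points: one $cD_4$ point and two $cA_2$ points. At each point I localize the equation and identify the type. Either the quadratic part has rank $\ge 2$, giving $cA_n$ with $n$ read off from the splitting lemma, or it has rank $1$ and the cubic residual gives $cD_4$. Isolated cDV hypersurface singularities are terminal, so $X_{4,1}$ is a terminal Fano threefold with $-K=\OO(1)$. Isolatedness and the singularity types are open conditions in $\kappa$, provided the singular points move in a flat family. I will check this by writing them as functions of $\kappa$.

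\textbf{Step 2: factoriality and $\rho=1$.} This is the main obstacle. For a nodal quartic one counts defect via the nodes imposing independent conditions on cubics (Cheltsov-type criteria). Here the singularities are worse, so I would use the general formula for the defect of a hypersurface with isolated singularities. Concretely, the rank of $\Cl/\Pic$ is governed by how the local conditions at the singular points fail to impose independent conditions on forms of degree $2\cdot 4-5=3$. As an alternative, I would use the fibration structure from Step 3: exhibit $\Cl(X)$ via the birational model and show that the fibration's fiber class together with sections do not descend to Weil divisors other than multiples of the hyperplane. Practically, I would compute the defect by the Jacobian-ring (Clemens/Dimca) method: the local Milnor and Tjurina algebras at the three points impose independent conditions on $H^0(\OO(3))$. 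This is a finite linear-algebra check at $\kappa=1$, and it is open in $\kappa$. Lefschetz then gives $\Pic=\ZZ[\OO(1)]$, hence $\Cl=\Pic$ and $\rho=1$.

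\textbf{Step 3: del Pezzo fibration and unirationality.} The equation is quadratic in $x_4$ with leading coefficient $x_0^2$, and the $x_4$-linear terms are divisible by $\iota$. Projecting from a suitable singular point, I would complete the square in $x_4$ and change variables to exhibit a birational model of the form $w^2=z^3+\cdots$. This is a degree-one del Pezzo fibration over $\P^1_{t}$, with the pencil given by a ratio such as $x_1/x_0$ (the $\kappa^2x_0^3x_1$ term suggests $t$-dependence of the constant coefficient). On the generic fiber over $K=\C(t)$ I will find the pair of conjugate lines $z=0$, $w=\pm\sqrt h u^3$, giving a quadratic root line. Theorem~\ref{intro:root} then yields a degree-four map from a degree-two del Pezzo surface $Z/K$. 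That surface is $K$-unirational by the Salgado--Testa--V\'arilly-Alvarado argument recalled above, so the generic fiber is $K$-unirational and $X_{4,\kappa}$ is unirational. The construction is uniform in $\kappa$ after excluding finitely many values where the generic fiber or $Z$ degenerates, which shrinks $U_4$ further.
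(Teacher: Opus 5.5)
Step~1 matches Proposition~\ref{quarticBasket}. Steps~2 and~3, however, each rest on an unproved claim, and the paper's forward construction is designed precisely to avoid both.

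\textbf{Step 3.} You run the construction backwards, as in Section~\ref{sec:family}, so everything hinges on the quadratic root line, which you assert only by analogy. In the model you would actually obtain, the assertion fails.
\begin{enumerate}
\item The fibration is the pencil $x_1=tx_0$, not a projection. Set $x_0=1$, $x_1=t$ and give $q=x_2$, $z=x_3$, $w=x_4$ weights $1,2,3$. Then $f_\kappa=0$ becomes \eqref{quarticDP1eq}, with $729t$ replaced by $729\kappa^2t$.
\item After completing the square, the $z$-free term is $L^2$ plus the binary sextic at the end of \eqref{quarticDP1eq}. That sum is not a square even over $\bar K$, so $\{z=0\}$ contains no lines at all.
\item The lines lie instead on $z=3q^2-24pq-9t^2p^2$, where the equation becomes $(w+L)^2=729\kappa^2t\,p^6$. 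The curves $w=-L\pm27\kappa\sqrt t\,p^3$ are exchanged both by $\Gal(K(\sqrt t)/K)$ and by the inversion $w\mapsto-w-2L$.
\end{enumerate}
So your ansatz becomes correct only after this translation, with $h=729\kappa^2t$. You would have to find it, and also check smoothness of the generic fibre, before Theorem~\ref{intro:root} applies.

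The paper never searches for this line. It starts from $Z_\kappa$ and the admissible pencil of lines through $[0:0:1]$, whose base scheme is the conjugate pair $\omega=\pm\sqrt t$. It builds $S_\kappa$ by the forward CLOP construction and reaches $X_{4,\kappa}$ by a two-ray game: two small wall crossings, then the $(3,3,2,1)$ weighted blow-up of the $cD_4$ point. Unirationality then follows from Proposition~\ref{prop:dp2-unirational} and the degree-four map $Z_\kappa\dashrightarrow S_\kappa$.

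\textbf{Step 2.} This describes a method but gives no argument. You do not say which defect formula you would use at the $cD_4$ point. Nor do you check whether its hypotheses hold there; Dimca's formula, for instance, assumes weighted homogeneous singularities. You also do not identify the local conditions, and no computation is carried out.

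Your fallback via the fibration is essentially the paper's argument, but it lacks its key input, $\Pic(S_\kappa)=\ZZ[-K_{S_\kappa}]$. A quadratic root line does not give this. The paper proceeds as follows:
\begin{enumerate}
\item It proves $\rho(S_\kappa)=1$ by toric Lefschetz on $\mathcal V_A$ and the global invariant cycle theorem.
\item It deduces $\Cl(\mathcal V)=\ZZ H\oplus\ZZ F$ from integrality of all fibres.
\item It carries this class group across the two small wall crossings.
\item It contracts the prime divisor $E\sim H$, which gives $\Cl(X_{4,\kappa})=\ZZ[\OO_{X_{4,\kappa}}(1)]$ directly.
\end{enumerate}
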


We finally investigate double covers of quadrics. A classical construction due to L. Roth, reproduced by V. A. Iskovskikh, proves the unirationality of a smooth double cover of $Q^3$ ramified along a quartic section \cite{Roth50,Isk80}. Section~\ref{sec:double-quadric} shows that the point correspondence in that construction is induced by the same relative Abel--Jacobi mechanism, and the relevant fiberwise map has degree nine.
\begin{introthm}\label{intro:double-quadric}
Let $X\to Q^3$ be a smooth double cover ramified along a smooth divisor in $|\OO_Q(4)|$. Then $X$ is unirational. The Roth--Iskovskikh correspondence is induced by a degree-nine relative Abel--Jacobi correspondence.
\end{introthm}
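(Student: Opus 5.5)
The plan is to reduce to a fibration by degree-one del Pezzo surfaces and then apply the unirationality criterion that follows from Theorem~\ref{intro:root}, adapted to the double-quadric setting. Write $X\to Q^3$ for the double cover branched along $B=Q\cap F_4$. Choose a line $\ell\subset Q$ (or, following Roth--Iskovskikh, a suitable conic) and consider the pencil of hyperplane sections of $Q$ containing $\ell$. Each member is a quadric surface $Q_t\cong\P^1\times\P^1$, and its preimage in $X$ is a double cover of $Q_t$ branched along a $(4,4)$-curve. This does not give degree-one del Pezzo fibers, so instead we use the classical trick: take the preimage $\tilde\ell\subset X$ of a line $\ell$ that is bitangent to $B$ (such lines exist by a dimension count on the Fano variety of lines of $Q$). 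Then $\tilde\ell$ splits into two rational curves, and projecting from $\ell$ realizes $X$ birationally as a fibration over $\P^1$ whose generic fiber $S_t$ is a degree-one del Pezzo surface, obtained as a double cover of a quadric cone branched along a cubic section, which is the anticanonical model in $\P(1,1,2,3)$. I will verify the degree by computing $K_{S_t}^2$ after blowing up $\ell$ and contracting. The two components of $\tilde\ell$ then give, over $K=\C(t)$, a pair of conjugate lines $E_\pm$, defined over a quadratic extension and mutually inverse as sections of the anticanonical elliptic fibration. In other words, they form a quadratic root line in the sense of Definition~\ref{def:rootline}.

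Given this structure, Theorem~\ref{intro:root} provides a smooth degree-two del Pezzo surface $Z_t$ over $K$ and a dominant map $Z_t\dashrightarrow S_t$ built from the relative Abel--Jacobi map. As remarked after Theorem~\ref{intro:root}, $Z_t$ is $K$-unirational by weak approximation and \cite{STVA14}, and therefore $S_t$ is $K$-unirational. A $K$-unirational generic fiber over a rational base gives unirationality of the total space, hence of $X$. For the second claim, I would instead take $A$ on the intermediate surface with $n=A\cdot(-K)=3$, so that \cite[Lemma~3.4]{CLOP26} yields degree $n^2=9$. The Roth--Iskovskikh map sends a point $p$ of the fiber to the residual point of a cubic or conic configuration through $p$ and the base curve. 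I would identify it with $p\mapsto$ the divisor class $3p-(\text{fixed degree-3 class})$ on the elliptic curve of the pencil through $p$, and check that this matches the explicit residual-intersection recipe on each genus-one curve.

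I expect the main obstacle to be the geometric setup. I must show that the fibers really are smooth degree-one del Pezzo surfaces, which requires a general choice of bitangent line and a smoothness check of the generic fiber. I must also show that the two components of $\tilde\ell$ are exchanged by the Galois action and land as the root sections $E_\pm$ with $[E_+]+[E_-]=-2K$ in the appropriate sense. My first attempt would be to write local coordinates in which $B$ restricted to the fiber takes the form $hu^6$ plus terms divisible by $z$, mirroring the normal form \eqref{eq:main-family}.
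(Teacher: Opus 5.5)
\textbf{The fibration your argument relies on does not exist.}

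Every member $S\in|H|=|-K_X|$ has $K_S=(K_X+S)|_S=0$. Take the preimage of a tangent cone $Q\cap T_qQ\simeq\P(1,1,2)$. It is a double cover branched along a quartic section of the cone, which is a curve in $|\OO_{\P(1,1,2)}(8)|$. So it is a K3 surface in $\P(1,1,2,4)$. A degree-one del Pezzo surface would instead need a cubic section, in $|\OO(6)|$.

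Projection from a line does not rescue this. Hyperplanes through a line form a net, not a pencil. Projection from a line $\ell\subset Q$ goes to $\P^2$: planes through $\ell$ cut $Q$ in $\ell\cup\ell'$. Hence the fibers of $X\dashrightarrow\P^2$ are the genus-one curves $\pi^{-1}(\ell')$ over the lines $\ell'$ meeting $\ell$. If $\ell$ is bitangent to $B$, the two components of $\pi^{-1}(\ell)$ meet each such fiber once. What you obtain is therefore a genus-one fibration over $\P^2$ with two rational sections. Its generic fiber has genus one and is never unirational, so Lemma~\ref{lem:spread} gives nothing.

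No other choice of line or conic can help either. Smooth double quadrics are birationally rigid (Iskovskikh), so $X$ is not birational to any del Pezzo fibration over $\P^1$. Consequently Theorems~\ref{intro:root} and~\ref{thm:V1-criterion} cannot be applied here.

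Your degree-nine identification is also wrong. On $E_\ell=\pi^{-1}(\ell)$ the only natural class is $H|_{E_\ell}$, of degree two, so there is no fixed degree-three class. Moreover, $z\mapsto 3z-D_3$ lands in $\Pic^0(E_\ell)$, not in the torsor $E_\ell$, so it cannot be iterated as a correspondence on $X$. The Roth--Iskovskikh point is $\mu(z)$ with $3z+\mu(z)\sim2H|_{E_\ell}$. It comes from $A=2H|_{E_\ell}$, of degree four, by letting $-\Phi_A(z)\in\Pic^0(E_\ell)$ act on $z$. The number nine is the degree of multiplication by $-3$, not $n^2$ with $n=3$.

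What is missing is a mechanism that avoids surface fibrations altogether, and this is what the paper uses. Consider $W=\{(z,\ell)\mid\pi(z)\in\ell\}\subset X\times F_1(Q)$. It is a $\P^1$-bundle over $X$ via $p$ and a genus-one fibration over $F_1(Q)\simeq\P^3$ via $f$. The map $\mu$ globalizes to $W\dashrightarrow W$, and $r=p\circ\mu$ is a correspondence on $X$. Lemma~\ref{lem:correspondence-growth} then gives unirationality from the growth $V_1(x)\subset V_2(x)\subset V_3(x)=X$, provided two conditions hold.
\begin{enumerate}
\item No nonconstant $g$ satisfies $p^*g=r^*g$. This follows from $\deg(h\circ\mu)=9\deg h$ on the generic fiber of $f$, together with the fact that two general lines of $Q$ are both met by a third line.
\item $C_x$ is a nonconstant rational curve through $x$.
\end{enumerate}
The second condition is the heart of the proof. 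Consider the K3 double cover $T_x\to\mathbb F_2$ attached to the cone of lines through $\pi(x)$, with sections $O$ and $P$ contracted to $x$ and to its conjugate point. There one has $\mu(O)=2P$. Shioda's height formula gives $\langle P,P\rangle=4$ and $\langle2P,2P\rangle=16$, hence $2P\cdot O=6$. So $2P$ is neither $O$ nor $P$, it is not contracted, and its image passes through $x$.
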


\subsection*{Organization of the paper}
Sections~\ref{sec:roots}--\ref{sec:criterion} develop the inverse degree-two Cheltsov--Loginov--Orlov--Prokhorov construction, CLOP for short, and the criterion over $\C(t)$. Sections~\ref{sec:family}--\ref{sec:computations} treat the smooth degree-one threefolds and the explicit parametrization. Sections~\ref{globalSec} and \ref{quarticSec} give the sextic double solid and quartic families. Section~\ref{sec:double-quadric} gives the relative Abel--Jacobi interpretation of the Roth--Iskovskikh construction. Section~\ref{appendix:magma} describes the accompanying computer checks.
\section{Root lines on degree-one del Pezzo surfaces}\label{sec:roots}
We first recall the relation between lines on a degree-one del Pezzo surface and sections of its anticanonical elliptic fibration. This allows us to formulate the Galois descent condition that will characterize the degree-two CLOP construction.

Throughout Sections~\ref{sec:roots} and \ref{sec:inverse}, let $k$ be a perfect field of characteristic different from $2$ and $3$, and let $S$ be a smooth del Pezzo surface of degree one over $k$. The anticanonical pencil has a unique base point $O\in S(k)$. Set $J=\Bl_O(S)$, let $C$ be the exceptional curve, and let $\vartheta:J\to\P^1_k$ be the induced relatively minimal elliptic fibration with zero section $C$.

\begin{Lemma}\label{lem:irreducible-anticanonical}
Every geometric member of $|-K_S|$ is integral, and every geometric fiber of $\vartheta$ is irreducible.
\end{Lemma}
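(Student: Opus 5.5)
We may pass to $\bar k$, since integrality of a geometric member and irreducibility of a geometric fiber are statements over $\bar k$. The argument is a degree computation with $K_S^2=1$, carried out on $S$ and then transported to $J$.

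\textbf{Members of $|-K_S|$.} Let $D\in|-K_S|$ and suppose $D$ is not integral. Then we can write $D=D_1+D_2$ with $D_1,D_2$ nonzero effective divisors; this covers both the reducible case and the nonreduced case, where we take $D_1$ to be a multiple component. Since $-K_S$ is ample, $-K_S\cdot D_i$ is a positive integer for each $i$. On the other hand,
\[
-K_S\cdot D_1+(-K_S)\cdot D_2=(-K_S)^2=1,
\]
so one of the two terms would have to be at most $0$, a contradiction. Hence every member of $|-K_S|$ is integral.

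\textbf{Fibers of $\vartheta$.} Write $\pi\colon J\to S$ for the blow-up at $O$. Every anticanonical curve on $S$ passes through the base point $O$. Since $(-K_S)^2=1$, two general members meet transversally at $O$, and therefore every member $D$ is smooth at $O$; otherwise its intersection multiplicity at $O$ with another member would be at least $2$. It follows that
\[
\pi^*D=\widetilde D+C,
\]
where $\widetilde D$ is the strict transform of $D$. The fibers of $\vartheta$ are exactly these strict transforms $\widetilde D$, which have class $-K_J=\pi^*(-K_S)-C$. The map $\widetilde D\to D$ is birational, so $\widetilde D$ is irreducible because $D$ is. Moreover, no component of a fiber can be supported on $C$, since $C$ is a section of $\vartheta$ rather than part of a fiber. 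Hence every geometric fiber of $\vartheta$ is irreducible, and in fact integral.

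The only step needing care is the smoothness of each member $D$ at $O$, which is what makes $\pi^*D=\widetilde D+C$ hold with multiplicity one along $C$. The intersection-number argument above gives it directly. Alternatively, it follows from the standard description of $S$ as a sextic in $\P(1,1,2,3)$, in which the anticanonical members are the sections $\{\alpha x_0+\beta x_1=0\}$.
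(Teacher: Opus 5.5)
Your proof is correct and follows essentially the same route as the paper: the degree count against $(-K_S)^2=1$ gives integrality, and multiplicity one of every member at $O$ then makes each fiber of $\vartheta$ the integral strict transform $\pi^*D-C$. The only difference is cosmetic: you get smoothness at $O$ from $1=D\cdot D'\geq\mult_O(D)\,\mult_O(D')$ for two distinct members (which share no component by the first part), whereas the paper reads it off from the length-one base scheme, whose local generators $s_0,s_1$ must then span $\mathfrak m_O/\mathfrak m_O^2$.
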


\begin{proof}
Work over the algebraic closure $\bar k$ of $k$, and let $D\in|-K_S|$. Write
$D=\sum_i m_iD_i$, where the $D_i$ are distinct irreducible curves.
Since $-K_S$ is ample, each $(-K_S)\cdot D_i$ is a positive integer,
while
$
1=(-K_S)^2=(-K_S)\cdot D
=\sum_i m_i(-K_S)\cdot D_i.
$

Hence $D$ has a single irreducible component, occurring with
multiplicity one, so $D$ is integral. It remains to check that blowing up the anticanonical base point does
not introduce an exceptional component in a fiber. The base scheme of
$|-K_S|$ has length $(-K_S)^2=1$, hence it is the reduced point $O$.
If $s_0,s_1$ locally generate the anticanonical pencil at $O$, then
$(s_0,s_1)=\mathfrak m_O$. 

Thus their classes form a basis of
$\mathfrak m_O/\mathfrak m_O^2$, and every nonzero linear combination
of $s_0$ and $s_1$ has order exactly one at $O$. Therefore every member
of $|-K_S|$ has multiplicity one at $O$, and its strict transform on
$J=\Bl_O(S)$ is integral. These strict transforms are precisely the
geometric fibers of $\vartheta$, so every geometric fiber of
$\vartheta$ is irreducible.
\end{proof}

A \emph{line} on $S_{\bar k}$ is a $(-1)$-curve, that is, a smooth rational
curve $E$ such that $E^2=-1$ and $(-K_S)\cdot E=1$. The lattice
$K_{S_{\bar k}}^\perp\subset\Pic(S_{\bar k})$ is isomorphic to $E_8(-1)$,
and $E\mapsto[E]+K_S$ is a Galois-equivariant bijection between the $240$
lines on $S_{\bar k}$ and the roots of $E_8$ \cite{Man86}.

Set
$S_{\bar k}=S\times_{\Spec(k)} \Spec(\bar k)$ and $J_{\bar k}=J\times_{\Spec(k)} \Spec(\bar k)$. We denote by
$\MW(J_{\bar k}/\P^1_{\bar k})$ the Mordell--Weil group of sections of
$\vartheta_{\bar k}:J_{\bar k}\to\P^1_{\bar k}$, with zero section
$C_{\bar k}$. Let $F$ denote the class of a fiber of
$\vartheta_{\bar k}:J_{\bar k}\to\P^1_{\bar k}$. Since all fibers of $\vartheta_{\bar k}$ are irreducible, the
Shioda map identifies the Mordell--Weil lattice with
$\langle C_{\bar k},F\rangle^\perp\subset\Pic(J_{\bar k})$, endowed with the
negative intersection pairing. For a section $P$ it is given by
$
\phi(P)=[P]-[C_{\bar k}]-(P\cdot C_{\bar k}+1)F.
$
Moreover, pull-back by $\pi_{\bar k}:J_{\bar k}\to S_{\bar k}$ identifies
$K_{S_{\bar k}}^\perp$ with $\langle C_{\bar k},F\rangle^\perp$. We use
throughout the resulting identification
$K_{S_{\bar k}}^\perp\simeq\MW(J_{\bar k}/\P^1_{\bar k})$.

\begin{Lemma}\label{lem:line-section}
Let $L/k$ be a field extension and let $E\subset S_L = S \times_{\Spec(k)} \Spec(L)$ be a line. Then $E$
avoids $O$, and its strict transform $P_E\subset J_L = J\times_{\Spec(k)} \Spec(L)$ is a section of
$\vartheta_L$ disjoint from $C_L$. Under the above identification, the root
$[E]+K_S$ corresponds to $P_E$.
\end{Lemma}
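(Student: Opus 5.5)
We may work over $\bar k$ (or over $\bar L$), since all three claims are geometric. The plan has three steps.

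\emph{Step 1: $E$ avoids $O$.} Suppose $O\in E$. By Lemma~\ref{lem:irreducible-anticanonical}, $|-K_S|$ is a base-point-free pencil off $O$, and its members through a general point of $E$ are integral of arithmetic genus one. Choose a point $p\in E\setminus\{O\}$ and let $D\in|-K_S|$ be the unique member through $p$. If $D\neq E$, then $D\cdot E\ge 2$, since $D$ meets $E$ at both $O$ and $p$ (or with multiplicity if $p=O$, which we avoid). This contradicts $D\cdot E=(-K_S)\cdot E=1$. If $D=E$, then $E$ is an anticanonical member, so $E^2=(-K_S)^2=1\neq -1$, again a contradiction. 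Hence $O\notin E$.

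\emph{Step 2: $P_E$ is a section disjoint from $C$.} Since $O\notin E$, we have $P_E=\pi^*E$, so $P_E\cdot C=0$. Also $P_E\cdot F=\pi^*E\cdot(\pi^*(-K_S)-C)=E\cdot(-K_S)=1$. The curve $P_E$ is irreducible and not contained in a fiber (its degree on fibers is $1$). It therefore maps isomorphically onto $\P^1$, i.e.\ it is a section of $\vartheta_L$ disjoint from $C_L$.

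\emph{Step 3: the root corresponds to $P_E$.} The Shioda map gives
\[
\phi(P_E)=[P_E]-[C]-F,
\]
since $P_E\cdot C=0$. Next, $F=\pi^*(-K_S)-C$ and $K_J=\pi^*K_S+C$. Hence
\[
\phi(P_E)=\pi^*[E]-C-\pi^*(-K_S)+C=\pi^*([E]+K_S).
\]
This is precisely the pull-back of the root under the identification $K_S^\perp\simeq\langle C,F\rangle^\perp$. Since the Mordell--Weil identification is induced by $\phi$, the root $[E]+K_S$ corresponds to $P_E$.

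The only step requiring any care is Step 1, in particular the fact that the member through a point of $E$ is distinct from $E$. This is handled by the self-intersection comparison above. The remaining steps are direct intersection computations.
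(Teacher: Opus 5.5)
Your proof is correct and follows the paper's three-step structure. Steps 2 and 3 are the same intersection computations and the same Shioda-map identity $\phi(P_E)=\pi^*([E]+K_S)$. Step 1 is a downstairs variant of the paper's argument: the paper computes $F\cdot\widetilde E=(-K_S)\cdot E-\mult_O(E)=0$ on $J$, concludes that $\widetilde E$ is a fiber by Lemma~\ref{lem:irreducible-anticanonical}, and obtains a contradiction from the genus. You instead show directly on $S$ that $E$ would have to be the pencil member through a second point of $E$, and obtain a contradiction from $E^2=1\neq-1$.
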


\begin{proof}
Suppose first that $E$ contains $O$. Since $E$ is smooth at $O$, its
strict transform $\widetilde E\subset J_L$ satisfies
$
F\cdot\widetilde E=(-K_S)\cdot E-\mult_O(E)=1-1=0.
$
Hence $\widetilde E$ is vertical for $\vartheta_L$. By
Lemma~\ref{lem:irreducible-anticanonical}, every geometric fiber of
$\vartheta_L$ is irreducible, so $\widetilde E$ would have to be a
fiber. This is impossible, since $\widetilde E\simeq E\simeq\P^1_L$,
whereas a fiber of $\vartheta_L$ has arithmetic genus one. Thus $E$
avoids $O$.

Consequently its strict transform $P_E$ is isomorphic to $E$, and
$
F\cdot P_E=(-K_S)\cdot E=1.
$
Therefore $\vartheta_L|_{P_E}:P_E\to\P^1_L$ has degree one and hence is
an isomorphism, so $P_E$ is a section. Since $E$ avoids the center of
the blow-up, its strict transform is disjoint from the exceptional
curve $C_L$.

After base change to an algebraic closure, the Shioda map therefore gives
$
\phi(P_E)=[P_E]-[C_{\bar k}]-F.
$
Moreover,
$
\pi_{\bar k}^*K_S=-F-C_{\bar k},
$
since $K_J=\pi^*K_S+C$ and $F=-K_J$. As $E$ avoids $O$, we also have
$\pi_{\bar k}^*[E]=[P_E]$. Hence
$
\phi(P_E)
=[P_E]-[C_{\bar k}]-F
=\pi_{\bar k}^*([E]+K_S).
$
Thus the root $[E]+K_S$ corresponds to the section $P_E$ under the
identification
$K_{S_{\bar k}}^\perp\simeq\MW(J_{\bar k}/\P^1_{\bar k})$
\cite{Shi90,CLOP26}.
\end{proof}

\begin{Definition}\label{def:rootline}
A \emph{quadratic root line} on $S$ consists of a separable extension $L/k$ of degree at most two and a line $E\subset S_L$. Let $P_E$ be the section of Lemma~\ref{lem:line-section}. No further condition is imposed if $L=k$, while, if $[L:k]=2$ and $\sigma$ is the nontrivial element of $\Gal(L/k)$, then
$
\sigma(P_E)=-P_E
$
in $\MW(J_L/\P^1_L)$.
\end{Definition}

If $[L:k]=2$, condition $\sigma(P_E)=-P_E$ says that $\sigma$ acts by $-1$ on the root $[E]+K_S$. Allowing $L=k$, a quadratic root line is therefore the same as a Galois-stable unordered root pair $\{\pm([E]+K_S)\}\simeq A_1$.

\section{The inverse CLOP construction}\label{sec:inverse}

In this section we characterize the degree-one del Pezzo surfaces arising from the degree-two CLOP construction. We first recall the construction and its relative Abel--Jacobi map, and then prove the root-line criterion.

Let $Z$ be a smooth del Pezzo surface of degree two. An anticanonical pencil $\mathcal P\subset|-K_Z|$ is \emph{admissible} if its base scheme is reduced, its generic member is smooth, and every geometric member is irreducible. Its base scheme has length two. Blowing it up gives a genus-one fibration $Y\to\P^1_k$; the relative Jacobian $J\to\P^1_k$ is a rational elliptic surface with irreducible fibers, and contracting its zero section gives a smooth del Pezzo surface of degree one \cite[Section~3]{CLOP26}.

\begin{Remark}\label{rem:relative-AJ}
We recall explicitly the relative Abel--Jacobi map in the degree-two case.
Let $Z$ be a smooth del Pezzo surface of degree two, let
$\mathcal P\subset|-K_Z|$ be an admissible pencil with geometric base locus
$\{Q_1,Q_2\}$, and let $\pi:Y\to Z$ be its blow-up. Denote by
$\theta:Y\to\P^1$ the induced genus-one fibration and by
$\vartheta:J\to\P^1$ its relative Jacobian. If $\eta$ is the generic point of
$\P^1$, the two exceptional sections determine points, still denoted by
$Q_1,Q_2$, on $Y_\eta$, and
$\pi^*(-K_Z)|_{Y_\eta}\simeq\OO_{Y_\eta}(Q_1+Q_2)$. 

Hence the relative
Abel--Jacobi map associated with $A=-K_Z$ restricts on the generic fiber to
$
Y_\eta\longrightarrow J_\eta=\Pic^0(Y_\eta),\,
P\longmapsto\OO_{Y_\eta}(2P-Q_1-Q_2).
$
If $Q_1$ and $Q_2$ are
defined only over a quadratic extension, the divisor $Q_1+Q_2$ is nevertheless
defined over the ground field, so the map descends. After choosing $Q_1$ as
the origin over a field where the two points are defined, the map becomes
$P\mapsto2P-(Q_2-Q_1)$ on $J_\eta$, that is, multiplication by $2$ followed
by a translation. In particular, it has degree $4$ \cite[Lemma~3.4]{CLOP26}.
\end{Remark}

\begin{proof}[Proof of Theorem~\ref{intro:root}]
Assume first that $S$ carries a quadratic root line $(L,E)$, and set
$P=P_E\subset J_L$. Suppose that $L=k$. By
Lemma~\ref{lem:line-section}, the sections $C$ and $P$ are disjoint.
Moreover, $C^2=-1$, while $P^2=E^2=-1$, since $P$ is the strict
transform of $E$ and $E$ avoids $O$. Thus they can be contracted
simultaneously to two distinct smooth points by a morphism
$q:J\to Z$. Since $K_J^2=0$, we have $K_Z^2=2$. If $F$ denotes the
fiber class, then $-K_J=F$, and the canonical divisor formula for $q$
gives
$
q^*(-K_Z)=F+C+P.
$

We claim that $-K_Z$ is ample. Ampleness may be checked after base
change to $\bar k$. Let $\Gamma\subset Z_{\bar k}$ be an irreducible
curve and let $D\subset J_{\bar k}$ be its strict transform. If $D$ is
horizontal for $\vartheta$, then
$q^*(-K_Z)\cdot D\geq F\cdot D>0$. If $D$ is vertical, then
Lemma~\ref{lem:irreducible-anticanonical} implies that $D$ is a fiber,
and hence
$q^*(-K_Z)\cdot D=(C+P)\cdot D=2$. Thus $(-K_Z)\cdot\Gamma>0$ for
every irreducible curve $\Gamma\subset Z_{\bar k}$. Since
$(-K_Z)^2=2$, the Nakai--Moishezon criterion shows that $-K_Z$ is
ample, so $Z$ is a del Pezzo surface of degree two.

The images of the fibers of $\vartheta$ form an anticanonical pencil
on $Z$. Indeed, if $F_\lambda$ is a fiber, then
$q^*q(F_\lambda)=F_\lambda+C+P$, which has class $q^*(-K_Z)$ by
$q^*(-K_Z)=F+C+P$. Since the strict transforms of the
members form the base-point-free pencil $|F|$ on $J$, its base scheme
on $Z$ consists precisely of the two reduced points $q(C)$ and
$q(P)$. The generic member is smooth, and every geometric member is
irreducible by Lemma~\ref{lem:irreducible-anticanonical}. Thus the
pencil is admissible. Blowing up its base scheme recovers $J$, whose
relative Jacobian is itself, and contracting the zero section $C$
recovers $S$. Hence this pencil gives the required CLOP presentation.

Suppose now that $[L:k]=2$, and let $\sigma$ be the nontrivial element
of $\Gal(L/k)$. Translation by a section of a relatively minimal
elliptic surface extends from the smooth locus to an automorphism of the
whole minimal regular model; hence translation by $P$ defines an
automorphism $t_P$ of $J_L$ over $\P^1_L$ \cite[Section~1]{Shi90}. Set
$
\tau=t_P\circ\sigma.
$
Since $\sigma(P)=-P$, we have
$\sigma\circ t_P\circ\sigma^{-1}=t_{\sigma(P)}=t_{-P}$, and therefore
$\tau^2=\id$. The divisor $2F+C+P$ is $\tau$-invariant and ample: its
intersection with $C$ and $P$ is $1$, its intersection with a fiber is $2$,
and every other irreducible curve has positive intersection with it; moreover
$(2F+C+P)^2=6$. Hence the descent datum is effective in the projective
category. Thus $\tau$ descends $J_L$ to a smooth genus-one fibration
$\theta:Y\to\P^1_k$. The induced descent datum on
$\Pic^0(J_L/\P^1_L)$ is the original one on $J$, since translation
acts trivially on $\Pic^0$. Hence the relative Jacobian of $Y$ is
$\vartheta:J\to\P^1_k$.

Moreover, $\tau(C)=P$ and $\tau(P)=C$. The simultaneous contraction
of the disjoint $(-1)$-curves $C$ and $P$ is therefore
$\tau$-equivariant and descends to a birational morphism
$Y\to Z$ over $k$. After base change to $L$, this is exactly the
contraction considered above. Consequently $Z$ is a smooth del Pezzo
surface of degree two, and the genus-one fibration on $Y$ descends to
an admissible anticanonical pencil on $Z$. Its relative Jacobian is
$J$, so the degree-two CLOP construction contracts the zero section
of $J$ and produces the original surface $S$.

Conversely, suppose that $S$ is obtained from an admissible
anticanonical pencil on a smooth del Pezzo surface $Z$ of degree two.
Let $\Sigma$ be its reduced base scheme. Since $k$ is perfect,
$\Sigma$ is split by a separable extension $L/k$ of degree at most
two. Write its two geometric points as $Q_1,Q_2$. On
$Y_L=\Bl_{\Sigma_L}(Z_L)$ the corresponding exceptional curves are
disjoint sections of the induced genus-one fibration. Taking $Q_1$
as zero identifies this fibration with its relative Jacobian $J_L$;
under this identification $Q_1$ becomes the zero section $C$ and
$Q_2$ becomes a section $P$ disjoint from $C$.

Contracting $C$ gives $S_L$. Let $E\subset S_L$ be the image of $P$.
Since $P$ is an exceptional curve of $Y_L\to Z_L$, we have
$P^2=-1$, and this self-intersection is unchanged when the disjoint
curve $C$ is contracted. Thus $E^2=-1$. Moreover,
$\pi^*(-K_S)=F+C$, where $\pi:J_L\to S_L$ contracts $C$, and hence
$
(-K_S)\cdot E=(F+C)\cdot P=1.
$
Since $P\simeq\P^1_L$, the curve $E$ is a line on $S_L$.

If $L=k$, this gives a $k$-rational root line. Suppose that
$[L:k]=2$. Then $\sigma$ exchanges $Q_1$ and $Q_2$. In the relative
Picard group, the section $P$ corresponds to the degree-zero divisor
class
$
[Q_2-Q_1]\in\Pic^0(Y_L/\P^1_L).
$
Therefore
$
\sigma(P)=[Q_1-Q_2]=-P.
$
By Lemma~\ref{lem:line-section}, $P=P_E$, so $(L,E)$ is a quadratic
root line on $S$.

Finally, set $A=-K_Z$ in \cite[Lemma~3.4]{CLOP26}. Since
$A\cdot(-K_Z)=(-K_Z)^2=2$, the relative Abel--Jacobi construction
gives a dominant rational map
$Z\dashrightarrow S$ of degree $2^2=4$.
\end{proof}

Theorem~\ref{intro:root} says that a degree-two CLOP presentation is the same as a Galois-stable unordered root pair $\{\pm\alpha\}\subset E_8$.

\section{A unirationality criterion for del Pezzo threefolds of degree one}\label{sec:criterion}

We now work over $K=\C(t)$. Combining the inverse CLOP construction with unirationality of degree-two del Pezzo surfaces over $K$, we obtain a unirationality criterion for degree-one del Pezzo threefolds.

\begin{Proposition}\label{prop:dp2-unirational}
Every smooth del Pezzo surface of degree two over $K$ is $K$-unirational.
\end{Proposition}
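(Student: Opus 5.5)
The plan follows the outline sketched in the introduction. We produce a $K$-rational point of $X$ in general position and then invoke the criterion of Salgado--Testa--V\'arilly-Alvarado \cite{STVA14}. That criterion says a del Pezzo surface $X$ of degree two over a field $K$ is $K$-unirational as soon as it has a $K$-point lying neither on the ramification quartic $B$ of the anticanonical double cover $\varphi:X\to\P^2_K$ nor on the generalized Eckardt locus; the latter is a proper closed subset of $X$ (a finite union of curves). So the whole task is to show that $X(K)$ is not contained in this proper closed subset $W\subset X$.

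\textbf{Step 1: spread out and find a point.} Write $X=\{w^2=f_4(x,y,z)\}\subset\P_K(1,1,1,2)$, with $f_4\in\C(t)[x,y,z]$ a smooth quartic. Spread $X$ out to a smooth projective family $\mathcal X\to U$ over a nonempty open $U\subset\mathbb A^1_\C$, so that every fiber $\mathcal X_u$ is a smooth del Pezzo surface of degree two over $\C$. Spread the bad locus out as well, to a closed subset $\mathcal W\subset\mathcal X$ with each $\mathcal W_u$ a proper closed subset of $\mathcal X_u$. The existence of points is free: by Tsen's theorem ($K=\C(t)$ is $C_1$), or more concretely by Graber--Harris--Starr, the rationally connected fibration $\mathcal X\to U$ has sections. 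Hence $X(K)\neq\emptyset$.

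\textbf{Step 2: move the point off $W$.} For this we use weak approximation at places of good reduction, which holds for rationally connected fibrations over curves by Hassett--Tschinkel. Pick $u_0\in U$ and a point $p\in\mathcal X_{u_0}(\C)\setminus\mathcal W_{u_0}$. Weak approximation gives a section $s$ of $\mathcal X\to U$ with $s(u_0)=p$. Since $s(u_0)\notin\mathcal W$, the generic point of $s$ is not in $W$. This produces a $K$-point $x\in X(K)\setminus W$, and the STVA criterion then gives $K$-unirationality.

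\textbf{Alternative route and main obstacle.} We could avoid weak approximation. Since $X_{\bar K}$ is rational and $X(K)\neq\emptyset$, one route is to show that $X(K)$ is Zariski dense, for instance by the standard argument producing infinitely many $K$-points on the conic-like curves $\varphi^{-1}(\ell)$ for lines $\ell$ through $\varphi(x)$. The step I expect to be the main obstacle is verifying that the exceptional locus in \cite{STVA14} is really a proper closed subset of the correct form, so that the specialization argument applies. In particular one must check that the generalized Eckardt curves and the ramification curve spread out compatibly and do not contain whole fibers, possibly after shrinking $U$. Once that is granted, the rest is formal.
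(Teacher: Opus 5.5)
Your proposal is correct and follows essentially the paper's route: spread out, get a $K$-point from Graber--Harris--Starr, use Hassett--Tschinkel weak approximation at a place of good reduction to find a $K$-point outside the ramification curve and the generalized Eckardt locus, and conclude with \cite[Corollary~3.3]{STVA14}. For the approximation you need a formal section through your chosen point of the fiber, which smoothness of $\mathcal X\to U$ provides. The \emph{obstacle} you flag is handled just by shrinking $U$, exactly as in the paper.

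Some of your side remarks are unneeded and partly inaccurate. The $C_1$ property does not apply directly to the weighted equation $w^2=f_4$, and the generalized Eckardt locus is a finite set of points rather than a union of curves. The curves $\varphi^{-1}(\ell)$ have genus one rather than being conic-like, so your alternative density argument would not work as stated.
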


\begin{proof}
Let $Z/K$ be a smooth del Pezzo surface of degree two and spread it out
to a smooth projective family $\mathcal Z\to U$ over a nonempty open
subset $U\subset\P^1$. Since $Z$ is geometrically rationally connected,
the theorem of Graber--Harris--Starr gives $Z(K)\neq\varnothing$
\cite{GHS03}. Moreover, rationally connected varieties over function
fields of complex curves satisfy weak approximation at places of good
reduction \cite{HT06}.

We claim that $Z(K)$ is Zariski dense. Let $W\subsetneq Z$ be a proper
closed subset and let $\mathcal W\subset\mathcal Z$ be its closure.
After shrinking $U$, we may assume that
$\mathcal W_b\subsetneq\mathcal Z_b$ for every $b\in U$. Choose
$b\in U$ and
$q\in\mathcal Z_b(\C)\setminus\mathcal W_b$. Since
$\mathcal Z\to U$ is smooth, formal smoothness gives a local section
through $q$. Weak approximation at the place $b$ then gives a
$K$-point $p\in Z(K)$ whose reduction at $b$ is $q$. If $p$ belonged
to $W$, its specialization would belong to $\mathcal W_b$, contradicting
the choice of $q$. Hence $p\notin W$, and therefore $Z(K)$ is Zariski
dense.

Let $R\subset Z$ be the ramification divisor of the anticanonical
double cover $Z\to\P^2$, and let $\mathcal E\subset Z$ be the
generalized Eckardt locus, that is, the locus of geometric points
contained in four exceptional curves. Both are proper closed subsets
of $Z$. Hence the density proved above gives
$
p\in Z(K)\setminus(R\cup\mathcal E).
$
By the unirationality criterion of
Salgado--Testa--V\'arilly-Alvarado, a degree-two del Pezzo surface
containing such a rational point is unirational
\cite[Corollary~3.3]{STVA14}. Thus $Z$ is $K$-unirational.
\end{proof}

\begin{Lemma}\label{lem:spread}
Let $Y$ be an irreducible complex threefold with a dominant morphism $Y\to\P^1$ whose generic fiber $Y_\eta$ is a geometrically integral surface over $K$. If $Y_\eta$ is $K$-unirational, then $Y$ is unirational.
\end{Lemma}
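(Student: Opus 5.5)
The plan is to spread out the $K$-unirational parametrization of the generic fiber to a dominant rational map onto $Y$ from a rational threefold. Since $Y_\eta$ is $K$-unirational, there is a dominant rational map $\varphi_\eta:\P^2_K\dashrightarrow Y_\eta$ defined over $K=\C(t)$. Here I use that a $K$-unirational surface is dominated by some $K$-rational variety of dimension two, hence, after possibly restricting a map from a higher-dimensional projective space to a general linear subspace, by $\P^2_K$; in fact dimension does not matter for the argument, so I may simply allow $\P^N_K$.

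Next, spread out $\varphi_\eta$. The map $\varphi_\eta$ is given by finitely many rational functions with coefficients in $\C(t)$. Clearing denominators, it extends to a rational map $\Phi:\P^N\times\P^1\dashrightarrow Y$ over $\P^1$, defined on a dense open subset of $\P^N\times U$ for some nonempty open $U\subset\P^1$. Concretely, $Y$ is birational to a model whose fibers over $U$ are the specializations of $Y_\eta$, and the generic point of $\P^N\times\P^1$ maps to a point of $Y_\eta$, i.e. to a point of $Y$ lying over the generic point of $\P^1$. Since $\P^N_K$ is the generic fiber of $\P^N\times\P^1\to\P^1$, the map $\Phi$ restricted to generic fibers is $\varphi_\eta$.

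Dominance is the key point. The image of $\Phi$ contains the image of $\varphi_\eta$, which is dense in $Y_\eta$. The generic fiber $Y_\eta$ is dense in $Y$ because $Y\to\P^1$ is dominant and $Y$ is irreducible. Hence the closure of $\Phi(\P^N\times\P^1)$ contains $Y_\eta$ and therefore equals $Y$, so $\Phi$ is dominant. Since $\P^N\times\P^1$ is rational, $Y$ is unirational. If one insists on a source of dimension three, restrict $\Phi$ to a general linear $\P^2\times\P^1$; this keeps dominance, since dominance is preserved on a general linear section of the right dimension.

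The only mildly delicate step is making the spreading out precise, namely that a $K$-rational map between generic fibers is literally a rational map between the total spaces. This holds because $\C(\P^N\times\P^1)=K(\P^N_K)$ and $\C(Y)=K(Y_\eta)$, the latter using geometric integrality of $Y_\eta$ so that $Y_\eta$ is integral with function field $\C(Y)$. Once this is in place, $\varphi_\eta$ is the same thing as a $K$-embedding $\C(Y)\hookrightarrow\C(\P^N\times\P^1)$ of function fields. Such an embedding gives a dominant rational map of $\C$-varieties, which is exactly the statement. I would phrase the proof in this function-field form to avoid any discussion of open sets.
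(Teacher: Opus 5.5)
Your argument is correct and is essentially the paper's proof: the paper also reads the $K$-unirational parametrization as a $K$-embedding $K(Y_\eta)\hookrightarrow K(u,v)$. It then uses $K(Y_\eta)=\C(Y)$ and $K(u,v)=\C(t,u,v)=\C(\P^2\times\P^1)$ to obtain a dominant rational map $\P^2\times\P^1\dashrightarrow Y$, exactly as in your function-field formulation. Your extra density argument and the passage through $\P^N$ are harmless but unnecessary, and the identification $K(Y_\eta)=\C(Y)$ only needs $Y$ to be integral, not geometric integrality of $Y_\eta$.
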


\begin{proof}
Since $Y_\eta$ is $K$-unirational, there is a dominant rational map
$\P^2_K\dashrightarrow Y_\eta$. That is, there is an inclusion of
$K$-fields
$
K(Y_\eta)\hookrightarrow K(u,v).
$
Since $K=\C(t)$ is the function field of the base and $Y_\eta$ is the
generic fiber of $Y\to\P^1$, we have
$K(Y_\eta)=\C(Y)$. Hence
$
\C(Y)\hookrightarrow\C(t,u,v).
$
The latter is the function field of $\P^2\times\P^1$, so this inclusion
defines a dominant rational map
$\P^2\times\P^1\dashrightarrow Y$. Since
$\P^2\times\P^1$ is rational, $Y$ is unirational.
\end{proof}

Let $X$ be a smooth del Pezzo threefold of degree one, so $-K_X=2H$ and $H^3=1$. A pencil in $|H|$ has generic member a degree-one del Pezzo surface. Indeed, by adjunction, $-K_{X_\eta}=H|_{X_\eta}$ and $(-K_{X_\eta})^2=1$.

\begin{thm}\label{thm:V1-criterion}
Let $X$ be a smooth complex del Pezzo threefold of degree one and let $\Lambda\subset|H|$ be a pencil with smooth generic member $S/K$. If $S$ carries a quadratic root line, then $X$ is unirational.
\end{thm}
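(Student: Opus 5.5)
The plan is to chain together Theorem~\ref{intro:root}, Proposition~\ref{prop:dp2-unirational} and Lemma~\ref{lem:spread}. The first step is geometric. The pencil $\Lambda\subset|H|$ has a base curve $B$, the intersection of two members of $|H|$, of degree $H^3=1$. Blowing up $B$ gives a threefold $\widetilde X=\Bl_B X$ with a morphism $\widetilde X\to\P^1$. Its generic fiber is the generic member $S$ of $\Lambda$, regarded over $K=\C(t)$; this is a smooth del Pezzo surface of degree one by the adjunction remark preceding the statement. Since $\widetilde X$ is birational to $X$, it suffices to prove that $\widetilde X$ is unirational. Alternatively, one can work directly with the rational map $X\dashrightarrow\P^1$ and any resolution of it.

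The second step is over $K$. The field $K=\C(t)$ is perfect of characteristic zero, so Theorem~\ref{intro:root} applies. Because $S$ carries a quadratic root line, there is a smooth del Pezzo surface $Z$ of degree two over $K$ and a dominant rational map $Z\dashrightarrow S$ of degree four, defined over $K$. By Proposition~\ref{prop:dp2-unirational}, $Z$ is $K$-unirational, so there is a dominant rational map $\P^2_K\dashrightarrow Z$. Composing, we obtain a dominant $K$-rational map $\P^2_K\dashrightarrow S$, so $S$ is $K$-unirational.

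The third step spreads out. Apply Lemma~\ref{lem:spread} to $Y=\widetilde X$ with its morphism to $\P^1$. The generic fiber $S$ is smooth and geometrically connected, being an ample divisor section of a connected threefold, and hence geometrically integral. So the lemma gives that $\widetilde X$ is unirational, and therefore $X$ is unirational.

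I expect the main obstacle to be bookkeeping rather than depth. One point is that composing dominant rational maps stays dominant: the map $Z\dashrightarrow S$ is dominant, and the image of $\P^2_K$ is dense in $Z$, so the composition is dense in $S$. The other point is to check that the relative Abel--Jacobi map $Z\dashrightarrow S$ supplied by Theorem~\ref{intro:root} is genuinely defined over $K$ and not only over the quadratic extension $L$. This is handled in Remark~\ref{rem:relative-AJ} by the descent of $Q_1+Q_2$, so I would cite it there explicitly.
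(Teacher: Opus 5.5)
Your proposal is correct and follows the paper's proof essentially step for step. Theorem~\ref{intro:root} and Proposition~\ref{prop:dp2-unirational} make $S$ $K$-unirational, and Lemma~\ref{lem:spread} applied to a resolution of $X\dashrightarrow\P^1$ (your blow-up of the base curve) gives the conclusion; your extra checks, namely geometric integrality of the generic fiber and descent of the degree-four map to $K$ via Remark~\ref{rem:relative-AJ}, are details the paper leaves implicit.
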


\begin{proof}
By Theorem~\ref{intro:root}, there are a smooth degree-two del
Pezzo surface $Z/K$ and a dominant rational map
$Z\dashrightarrow S$ of degree four. By
Proposition~\ref{prop:dp2-unirational}, the surface $Z$ is
$K$-unirational. Composing a dominant rational map
$\P^2_K\dashrightarrow Z$ with $Z\dashrightarrow S$ shows that $S$ is
$K$-unirational.

Resolve the rational map $X\dashrightarrow\P^1$ defined by the pencil
$\Lambda$. This gives an irreducible threefold $\widetilde X$
birational to $X$ together with a dominant morphism
$\widetilde X\to\P^1$ whose generic fiber is $S$. Hence
Lemma~\ref{lem:spread} shows that $\widetilde X$ is unirational.
Since unirationality is a birational invariant, $X$ is unirational.
\end{proof}

\section{The explicit \texorpdfstring{$28$}{28}-parameter family}\label{sec:family}

We apply the criterion of the previous section to an explicit $28$-parameter family of sextic hypersurfaces. The key point is that the generic fiber of a natural pencil carries a quadratic root line.

Set $\P=\P(1_{x_0},1_{x_1},1_{x_2},2_z,3_w)$ and $\mathcal A=H^0(\P^2,\OO(2))\oplus H^0(\P^2,\OO(4))\oplus H^0(\P^1,\OO(6))$. Then $\dim\mathcal A=6+15+7=28$. A point $(a_2,a_4,h_6)\in\mathcal A$ determines
\begin{equation}\label{eq:family}
X_{a_2,a_4,h_6}=\{w^2=z^3+a_2z^2+a_4z+h_6\}\subset\P,
\end{equation}
where the variables of the coefficients are as in Theorem~\ref{intro:family}.

\begin{Proposition}\label{prop:generic-fiber}
If $X=X_{a_2,a_4,h_6}$ is smooth, the pencil generated by $x_0$ and $x_1$ has smooth generic member $S = (w^2=z^3+A_2(u,v)z^2+A_4(u,v)z+hu^6)\subset\P_K(1_u,1_v,2_z,3_w)$, where $A_i(u,v)=a_i(u,tu,v)$ and $h=h_6(1,t)$. Moreover, $S$ carries a quadratic root line.
\end{Proposition}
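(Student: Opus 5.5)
Two things have to be proved: that the generic member $S$ is smooth, and that it carries a quadratic root line. For smoothness, I would identify the pencil $\{\lambda x_0+\mu x_1=0\}$ and its generic member. Restricting $X$ to $x_1=tx_0$ and renaming $x_0=u$, $x_2=v$ gives exactly the stated equation of $S$ inside $\P_K(1,1,2,3)$, with $A_i(u,v)=a_i(u,tu,v)$ and $h_6(u,tu)=h\,u^6$. Since $X$ is smooth over $\C$, generic smoothness applied to the rational map $X\dashrightarrow\P^1$ shows that the general member $X\cap\{x_1=tx_0\}$ is smooth away from the base locus $\{x_0=x_1=0\}$. I would check the base curve directly. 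It is $\{w^2=z^3+a_2(0,0,x_2)z^2+a_4(0,0,x_2)z\}\subset\P(1,2,3)$, and smoothness of a member at these points follows from smoothness of $X$ there, because at a point of $X$ on the base locus with $x_0=x_1=0$, the members of the pencil through it are cut out by one linear form. The only exception would be the point where the tangent space of $X$ contains the whole span, which happens for at most finitely many $t$. It is simpler to argue that a hyperplane section $\{x_1=tx_0\}$ is singular at a smooth point $p$ of $X$ only if $dx_1-t\,dx_0$ vanishes on $T_pX$, and this condition cuts out finitely many $t$ for each $p$ on the base curve, giving a closed condition; dimension counting then gives that the generic $t$ works. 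I would also handle the quasi-smoothness issue at the singular points of $\P$: the point $[0:0:0:1:1]$ lies on $X$, and there smoothness of $X$ in the orbifold sense passes to the member. Then $S$ is a smooth degree-one del Pezzo surface over $K$, with anticanonical base point $O=[0:0:1:1]$ (that is, $u=v=0$, $w^2=z^3$).

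For the root line, put $L=K(\sqrt h)$ and $E_\pm=\{z=0,\ w=\pm\sqrt h\,u^3\}\subset S_L$. These curves lie on $S$ because setting $z=0$ leaves $w^2=hu^6$. Each is parametrized by $[u:v]$, so it is a smooth rational curve, and it has degree $1$ with respect to $\OO(1)=-K_S$, hence is a line. If $h$ is a square in $K$, which happens only for special $h_6$, then $E_+$ is defined over $K$ and we take $L=K$, so that no condition is needed. Otherwise $[L:K]=2$ and $\sigma$ swaps $E_+$ and $E_-$. What remains is to show $P_{E_-}=-P_{E_+}$ in the Mordell--Weil group.

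I expect this last identity to be the main obstacle, and I would prove it with the involution $\iota:w\mapsto -w$. On each fiber of the anticanonical pencil this involution is the elliptic inversion with origin $O$. To see this, note that the fibers $\{v=su\}$ are Weierstrass cubics $w^2=z^3+\dots$ in $\P(1,2,3)$ whose flex at infinity is $O$, and $w\mapsto -w$ is negation on such a cubic. Hence $\iota$ acts on $J$ as $-1$ on $\MW$. Since $\iota(E_+)=E_-$, we get $P_{E_-}=\iota(P_{E_+})=-P_{E_+}$. Combining this with $\sigma(E_+)=E_-$ gives $\sigma(P_{E_+})=-P_{E_+}$, as Definition~\ref{def:rootline} requires. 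Equivalently, in the lattice language, $[E_+]+[E_-]=-2K_S$ because $E_+ + E_-$ is the divisor $\{z=0\}\in|\OO(2)|$, so $([E_+]+K_S)+([E_-]+K_S)=0$.
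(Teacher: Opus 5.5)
Your root-line argument is correct and is the paper's argument: the involution $w\mapsto -w$ acts as inversion on the Weierstrass fibers, and Galois swaps $E_\pm$. Your lattice check is also valid and a bit more direct: $\{z=0\}\cap S=E_++E_-\in|\OO_S(2)|=|-2K_S|$, so $([E_+]+K_S)+([E_-]+K_S)=0$.

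The gap is in the smoothness of $S$ along the base curve $B=X\cap\{x_0=x_1=0\}$, and your dimension count does not close it. Consider the incidence $I=\{(p,t): p\in B,\ X\cap\{x_1=tx_0\}\text{ singular at }p\}$. It has at most one point over each $p\in B$, so $\dim I\le 1$. That bound does not stop $I\to\mathbb{P}^1$ from being dominant, and if it is dominant, every member is singular somewhere on $B$. What you actually need is that only finitely many $p\in B$ admit any bad $t$. The bad points are exactly those with $dF\in\langle dx_0,dx_1\rangle$, that is, the singular points of the scheme $B$. At every other point of $B$ all members are smooth, and at a bad point exactly one member is singular. So the missing input is that $B$ is generically reduced.

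This depends on the specific family and can fail in general. Take the smooth quadric $x_0x_3+x_1x_4+x_2^2=0$ in $\mathbb{P}^4$ with the pencil $x_1=tx_0$. The base scheme is the double line $x_0=x_1=x_2^2=0$, and every member $x_0(x_3+tx_4)+x_2^2=0$ is a quadric cone whose vertex lies on that line. Each point of the line kills exactly one member, exactly as in your count, yet every member is singular.

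The paper fills this gap by writing $B=\{w^2=z^3+\alpha x_2^2z^2+\beta x_2^4z\}\subset\mathbb{P}(1,2,3)$. The right-hand side is not a square: a form of weight three is $\gamma x_2^3+\delta x_2z$, and its square has no $z^3$ term. Hence $B$ is integral, it has finitely many singular points, and only finitely many members of the pencil are singular along $B$.

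A minor correction: $[0:0:0:1:1]$ is not a singular point of $\mathbb{P}(1,1,1,2,3)$, since its stabilizer is trivial because $\gcd(2,3)=1$. The actual singular points, the $z$-point and the $w$-point, do not lie on $X$, so no orbifold issue arises.
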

\begin{proof}
The equation of the generic member is obtained by setting
$x_0=u$, $x_1=tu$, and $x_2=v$. We first prove that it is smooth.
Bertini's theorem gives smoothness away from the base curve
$B=X\cap\{x_0=x_1=0\}$, so it is enough to consider points of $B$.

The curve $B$ is integral. Indeed, writing
$a_2(0,0,x_2)=\alpha x_2^2$ and
$a_4(0,0,x_2)=\beta x_2^4$, its equation is
$
w^2=z^3+\alpha x_2^2z^2+\beta x_2^4z
$
in $\P(1_{x_2},2_z,3_w)$. The polynomial on the right is not a square:
a form of weight three in $x_2,z$ has the form
$\gamma x_2^3+\delta x_2z$, and its square has no $z^3$-term.
Hence the defining equation of $B$ is irreducible. In particular,
$B$ has only finitely many singular points.

Let $p\in B$ be smooth. Since $B$ is the complete intersection of
$X$, $\{x_0=0\}$, and $\{x_1=0\}$ near $p$, the differentials
$dF,dx_0,dx_1$ are linearly independent at $p$. Hence
$dF$ and $d(\lambda_0x_0+\lambda_1x_1)$ are linearly independent for
every $[\lambda_0:\lambda_1]\in\P^1$, so every member of the pencil is
smooth at $p$. If $p$ is singular on $B$, then
$dF\in\langle dx_0,dx_1\rangle$. Since $X$ is smooth, $dF\neq0$, and
there is therefore a unique
$[\lambda_0:\lambda_1]\in\P^1$ such that
$d(\lambda_0x_0+\lambda_1x_1)$ is proportional to $dF$. Thus exactly
one member of the pencil is singular at $p$. Since $B$ has only
finitely many singular points, only finitely many members can be
singular along the base curve. Therefore the generic member $S/K$ is
smooth.

We next claim that $h_6\neq0$. If $h_6=0$, then the plane
$\Pi=\{z=w=0\}\simeq\P^2$ is contained in $X$. At a point of $\Pi$
where $a_4=0$, all partial derivatives of the defining equation of
$X$ vanish: the derivative with respect to $z$ is $-a_4$, while all
the others vanish on $\Pi$. Since a homogeneous quartic on $\P^2$
has a zero, this contradicts the smoothness of $X$. Hence $h_6\neq0$,
and consequently
$
h=h_6(1,t)\in K^*.
$

Set $L=K(\sqrt h)$, with $L=K$ if $h$ is already a square. Over $L$,
the surface $S$ contains the two curves
$
E_\pm=\{z=0,\ w=\pm\sqrt h\,u^3\}.
$
The map
$
[u:v]\longmapsto[u:v:0:\pm\sqrt h\,u^3]
$
identifies $E_\pm$ with $\P^1_L$. By adjunction,
$-K_S=\OO_S(1)$, and
$\OO_S(1)|_{E_\pm}\simeq\OO_{\P^1}(1)$. Hence
$(-K_S)\cdot E_\pm=1$, and adjunction gives
$
-2=(K_S+E_\pm)\cdot E_\pm=-1+E_\pm^2.
$
Thus $E_\pm^2=-1$, so both $E_\pm$ are lines.

Consider now the anticanonical elliptic fibration obtained by blowing
up the base point of $|-K_S|$. On its smooth fibers the involution
$(z,w)\mapsto(z,-w)$ is the inversion map with respect to the zero
section. It exchanges the strict transforms of $E_+$ and $E_-$, and
therefore
$
P_{E_-}=-P_{E_+}.
$
If $[L:K]=2$, the nontrivial element $\sigma\in\Gal(L/K)$ exchanges
$E_+$ and $E_-$, so
$
\sigma(P_{E_+})=P_{E_-}=-P_{E_+}.
$
Thus $E_+$ is a quadratic root line. If $L=K$, it is a rational root
line, and the conclusion follows as well.
\end{proof}

\begin{proof}[Proof of Theorem~\ref{intro:family}]
Let $X$ be a smooth member of \eqref{eq:family}. By adjunction,
$-K_X=2H$ and $H^3=1$, so $X$ is a del Pezzo threefold of degree one.
By Proposition~\ref{prop:generic-fiber}, the pencil generated by
$x_0$ and $x_1$ has smooth generic member over $K=\C(t)$, and this
generic member carries a quadratic root line. Therefore
Theorem~\ref{thm:V1-criterion} applies and shows that $X$ is
unirational.
\end{proof}

\begin{Proposition}\label{prop:family-dimension}
Let $\mathcal U\subset\mathcal A$ be the nonempty open subset parametrizing
smooth hypersurfaces. The closure of
$\Aut(\P(1,1,1,2,3))\cdot\mathcal U$ in
$\P H^0(\P(1,1,1,2,3),\OO(6))$ has dimension $50$. Consequently it has
codimension $13$ in $\P^{63}$, and its image in the moduli space of smooth
degree-one del Pezzo threefolds has dimension $21$.
\end{Proposition}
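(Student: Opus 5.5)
Compute the dimension as $\dim\mathcal A$ plus the dimension of $G=\Aut(\P(1,1,1,2,3))$, minus the dimension of the stabilizer of the family, via an incidence/orbit-map count. The group $G$ consists of substitutions $x_i\mapsto \sum g_{ij}x_j$ with $(g_{ij})\in\GL_3$ (dimension $9$), $z\mapsto c\,z+q_2(x)$ with $c\in\C^*$ and $q_2$ a quadric ($1+6=7$), and $w\mapsto e\,w+r_1(x)z+r_3(x)$ with $e\in\C^*$, $r_1$ linear, $r_3$ cubic ($1+3+10=14$). Modulo the one-dimensional weighted scaling acting trivially on $\P$, this gives $\dim G=9+7+14-1=29$. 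The space of sextics is $\P H^0(\OO(6))$, of dimension $63$; the family $\mathcal A$ maps to it via $(a_2,a_4,h_6)\mapsto[F]$, injectively since the coefficients are read off from $F=-w^2+z^3+a_2z^2+a_4z+h_6$.

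Consider the action map $\mu\colon G\times\mathcal U\to\P^{63}$, whose image closure is the locus in question. Its dimension is $29+28$ minus the dimension of a generic fiber. I would compute the fiber by determining which $(g,X)$ give $g\cdot X$ again in the family. Elements of $G$ preserving the normal form are: the subgroup of $\GL_3$ preserving the pencil $\langle x_0,x_1\rangle$, i.e.\ the substitutions $x_0,x_1\mapsto$ linear combinations of $x_0,x_1$ and $x_2\mapsto \gamma x_2+\ell(x_0,x_1)$ (dimension $4+3=7$), together with the scalings $z\mapsto cz$, $w\mapsto ew$ constrained by keeping the coefficients of $w^2$ and $z^3$ proportional ($e^2=c^3$, so one parameter, which mod the weighted $\C^*$ is absorbed); the shifts $q_2,r_1,r_3$ must vanish, since a shift $z\mapsto z+q_2$ creates a nonzero $q_2$-dependent term in $h_6$ involving $x_2$ unless compensated, and $w\mapsto w+r_3$ creates odd $w$-terms. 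So the fiber dimension is $7$, giving $29+28-7=50$, hence codimension $63-50=13$. For the moduli image, the moduli of smooth degree-one del Pezzo threefolds is $\P^{63}/G$ of dimension $63-29=34$ (generic stabilizers finite, since $\Aut$ of a smooth such $X$ is finite); the image of our locus has dimension $50-29=21$.

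The main obstacle is the stabilizer computation: showing that the generic fiber of $\mu$ has dimension exactly $7$ and not more, i.e.\ that a generic member of the family has no further unexpected normal-form-preserving transformations and that $G$-orbits of generic sextics in our locus are $29$-dimensional. I would establish the upper bound by the linearization argument above: the tangent space of the family at a point plus the tangent to the orbit, computed at the explicit smooth member of Section~\ref{sec:example}, spans a $50$-dimensional subspace of $T\P^{63}$. This rank computation is a finite linear-algebra check (suitable for the Magma verification of Section~\ref{appendix:magma}), and semicontinuity then gives the generic dimension $50$ on the irreducible image.
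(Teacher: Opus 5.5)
Your proposal is correct and follows essentially the paper's proof: the $7$-dimensional subgroup of $\Aut(\P(1,1,1,2,3))$ preserving $\mathcal A$ gives $\dim\le 28+29-7=50$, and the rank of the differential of the action map at one point, together with lower semicontinuity of the rank, gives the reverse bound. The differences are minor. You evaluate the rank at the member of Section~\ref{sec:example} rather than at a random integral point, and this also works: there the infinitesimal graded automorphisms $\xi$ with $\xi\cdot F\in T\mathcal A$ are exactly the $7$ directions of the parabolic subalgebra fixing $\langle x_0,x_1\rangle$, so the affine rank is again $28+(30-7)=51$. You also deduce finiteness of generic stabilizers from finiteness of $\Aut(X)$, instead of from the rank-$30$ check on the orbit columns.
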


\begin{proof}
Set $\P=\P(1,1,1,2,3)$. A graded automorphism of its homogeneous
coordinate ring has the form
$
x\mapsto Ax,\, z\mapsto az+q_2(x),\,
w\mapsto bw+\ell_1(x)z+q_3(x),
$
where $A\in\GL_3$, $q_2\in H^0(\P^2,\OO(2))$,
$\ell_1\in H^0(\P^2,\OO(1))$, and
$q_3\in H^0(\P^2,\OO(3))$. Thus the group of graded automorphisms has
dimension $9+7+14=30$, and quotienting by the one-dimensional subgroup
defining the weighted projective action gives $\dim\Aut(\P)=29$.

The subgroup preserving $\mathcal A$ has dimension $7$: the parabolic
subgroup of $\GL_3$ preserving $\langle x_0,x_1\rangle$ has dimension
$7$, there is one further compatible relative rescaling of $z$ and $w$,
and the weighted scalar subgroup removes one parameter. Therefore
$
\dim\bigl(\Aut(\P)\cdot\mathcal A\bigr)\leq 28+29-7=50.
$

For the reverse inequality, consider the integral coefficient point
$$
\begin{aligned}
a_2={}&x_0^2+2x_0x_1+3x_0x_2+5x_1^2+7x_1x_2+11x_2^2,\\
a_4={}&x_0^4+2x_0^3x_1+3x_0^3x_2+4x_0^2x_1^2
 +5x_0^2x_1x_2+6x_0^2x_2^2+7x_0x_1^3+8x_0x_1^2x_2\\
&+9x_0x_1x_2^2+10x_0x_2^3+11x_1^4+12x_1^3x_2
+13x_1^2x_2^2+14x_1x_2^3+15x_2^4,\\
h_6={}&x_0^6+2x_0^5x_1+3x_0^4x_1^2+4x_0^3x_1^3
 +5x_0^2x_1^4+6x_0x_1^5+7x_1^6,
\end{aligned}
$$
and write $F=w^2-z^3-a_2z^2-a_4z-h_6$. Let $M$ be the matrix whose
rows are indexed by the $64$ monomials of weighted degree six and whose
columns are the coefficient vectors of the following $58$ polynomials:
$
m_2z^2,\, m_4z,\, m_6,\,
x_jF_{x_i},\, zF_z,\, q_2F_z,\, wF_w,
\, x_i zF_w,\, q_3F_w.
$
Here $m_d$ runs through the monomials of degree $d$ in $x_0,x_1,x_2$,
$m_6$ runs through the binary sextic monomials in $x_0,x_1$,
$1\leq i,j\leq3$, and $q_2,q_3$ run through the quadratic and cubic
monomials. The first $28$ columns are the tangent directions to
$\mathcal A$, while the last $30$ are the infinitesimal graded
automorphism directions.

The exact computation gives
$
\operatorname{rank}M=51.
$
Since the infinitesimal weighted scalar action belongs to the last $30$
columns and acts on $F$ by a nonzero scalar multiple of $F$, one of these
$51$ affine tangent directions is the radial direction. Hence the image of
the differential in the projective space has rank $50$ at this point.
The rank of a matrix of regular functions is lower semicontinuous, so the
generic rank is at least $50$; consequently the swept locus has dimension
at least $50$. Combined with the upper bound above, its dimension is
exactly $50$.

The locus $\mathcal U$ is nonempty by Proposition~\ref{prop:explicit} and
is dense in $\mathcal A$, so its saturation has the same closure. The last
$30$ columns of $M$, corresponding to the affine graded automorphism
directions, have rank $30$. After quotienting by the radial weighted scalar
direction, the corresponding projective orbit therefore has dimension
$29$; equivalently, the stabilizer of a general member in $\Aut(\P)$ is
finite. Since $h^0(\P,\OO_{\P}(6))=64$, the codimension is $63-50=13$,
and the moduli dimension is $50-29=21$.
\end{proof}

\begin{Remark}
Every smooth sextic $X\subset\P(1,1,1,2,3)$ considered above has Picard
rank one, with $\Pic(X)=\ZZ[H]$ and $-K_X=2H$. Nevertheless, these
threefolds are not birationally rigid. Indeed, the pencil generated by
$x_0$ and $x_1$ has base curve $B=X\cap\{x_0=x_1=0\}$ of arithmetic
genus one and $H$-degree one. Resolving this pencil and running the
relative minimal model program over $\P^1$ gives a Mori fiber space whose
generic fiber is a del Pezzo surface of degree one. 

This agrees with Grinenko's
description of the Mori structures on a smooth Fano threefold of index
two and degree one: besides $X\to\operatorname{Spec}k$, they are
precisely the degree-one del Pezzo fibrations obtained by blowing up
curves of arithmetic genus one and degree one \cite{Gri04}. On the
other hand, Grinenko also proved that every birational self-map of $X$
is regular, that is, $\Bir(X)=\Aut(X)$. Thus these threefolds provide an example where the absence of
non-regular birational self-maps coexists with the failure of
birational rigidity. In particular, they are not rational.
\end{Remark}

\section{A smooth explicit example}\label{sec:example}

We now exhibit a concrete smooth member of the family and describe explicitly the degree-two del Pezzo surface giving its inverse CLOP presentation. Consider
\begin{equation}\label{eq:explicit-X}
X=(w^2=z^3-(x_0^2x_1^2+x_2^4)z+x_0^6+x_1^6)
\subset\P(1,1,1,2,3).
\end{equation}

\begin{Proposition}\label{prop:explicit}
The threefold $X$ in \eqref{eq:explicit-X} is smooth and unirational.
\end{Proposition}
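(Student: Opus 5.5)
Unirationality is immediate once smoothness is known. The threefold \eqref{eq:explicit-X} is the member of \eqref{eq:family} with $a_2=0$, $a_4=-(x_0^2x_1^2+x_2^4)$ and $h_6=x_0^6+x_1^6$. The last is indeed a binary sextic in $x_0,x_1$. Theorem~\ref{intro:family} (equivalently Proposition~\ref{prop:generic-fiber} combined with Theorem~\ref{thm:V1-criterion}) then gives unirationality. So the whole content is the smoothness check, and I would do it by a direct Jacobian computation in the weighted projective space.

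Set $F=w^2-z^3+a z-g$ with $a=x_0^2x_1^2+x_2^4$ and $g=x_0^6+x_1^6$. First I would check that $X$ avoids the singular points of $\P(1,1,1,2,3)$, which are $[0:0:0:1:0]$ and $[0:0:0:0:1]$.
\begin{itemize}
\item At $[0:0:0:1:0]$ we get $F=-z^3\neq0$.
\item At $[0:0:0:0:1]$ we get $F=w^2\neq0$.
\end{itemize}
Hence $X$ lies in the smooth locus, and quasi-smoothness (vanishing of all partials only at the origin of the affine cone) is equivalent to smoothness.

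The partial derivatives are:
\begin{itemize}
\item $F_w=2w$, forcing $w=0$;
\item $F_z=-3z^2+a$, forcing $a=3z^2$;
\item $F_{x_2}=4x_2^3z$;
\item $F_{x_0}=2x_0x_1^2z-6x_0^5$;
\item $F_{x_1}=2x_0^2x_1z-6x_1^5$.
\end{itemize}
Together with $F=0$ and $w=0$, this gives $z^3-az+g=0$, that is, $g=2z^3$ after substituting $a=3z^2$. I would split into cases according to $F_{x_2}=0$.

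\textbf{Case $z=0$.} Then $a=0$ and $g=0$. Here $x_0^6+x_1^6=0$ and $x_0^2x_1^2+x_2^4=0$, while $F_{x_0}=F_{x_1}=0$ give $x_0^5=x_1^5=0$. So $x_0=x_1=0$, hence $x_2=0$, and the point is the origin.

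\textbf{Case $x_2=0$, $z\neq0$.} Then $a=x_0^2x_1^2=3z^2$ and $g=2z^3$. The relations $x_0x_1^2z=3x_0^5$ and $x_0^2x_1z=3x_1^5$ hold.
\begin{itemize}
\item If $x_0=0$, then $a=0$ forces $z=0$, a contradiction. By symmetry $x_1\neq0$ as well.
\item Otherwise $x_1^2z=3x_0^4$ and $x_0^2z=3x_1^4$. Multiplying gives $x_0^2x_1^2z^2=9x_0^4x_1^4$, so $z^2=9x_0^2x_1^2=3z^2$ using $a=3z^2$. This forces $z=0$, a contradiction.
\end{itemize}

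The algebra here is short, so I expect no genuine obstacle. The only care needed is to handle the weighted singular points and to track the case analysis correctly. As a safeguard, one could confirm the result by a Gröbner basis computation showing that the Jacobian ideal together with $F$ is irrelevant, in the spirit of Section~\ref{appendix:magma}. This smooth member also supplies the nonemptiness of $\mathcal U$ used in Proposition~\ref{prop:family-dimension}.
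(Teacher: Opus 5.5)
Your proof is correct and follows the paper's argument essentially verbatim. Both check that $X$ misses the two singular points of $\P(1,1,1,2,3)$, run the Jacobian computation split into $z=0$ and $x_2=0,\ z\neq0$, and deduce unirationality from Theorem~\ref{intro:family}. One harmless arithmetic slip: from $z^2=9x_0^2x_1^2$ and $x_0^2x_1^2=3z^2$ you get $z^2=27z^2$, not $3z^2$, but this still forces $z=0$ (the paper instead derives the incompatible relations $(x_1/x_0)^6=1$ and $(x_1/x_0)^6=27$).
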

\begin{proof}
Set
$
F=w^2-z^3+(x_0^2x_1^2+x_2^4)z-x_0^6-x_1^6.
$
The singular locus of $\P(1,1,1,2,3)$ consists of the $z$-point and
the $w$-point. Neither belongs to $X$, since
$F(0,0,0,1,0)=-1$ and $F(0,0,0,0,1)=1$. It remains to prove that
$X$ is quasismooth.

The partial derivatives are $F_w=2w$, $F_z=-3z^2+x_0^2x_1^2+x_2^4$, $F_{x_0}=2x_0(x_1^2z-3x_0^4)$, $F_{x_1}=2x_1(x_0^2z-3x_1^4)$, $F_{x_2}=4x_2^3z$. Suppose that they vanish at a nonzero point of the affine cone. Then
$w=0$. If $z=0$, the equations
$F_{x_0}=F_{x_1}=F_z=0$ give
$x_0=x_1=x_2=0$, a contradiction. Hence $z\neq0$. From
$F_{x_2}=0$ we obtain $x_2=0$, and then $F_z=0$ gives
$
x_0^2x_1^2=3z^2.
$
In particular, $x_0x_1\neq0$. The equations
$F_{x_0}=F_{x_1}=0$ become
$
x_1^2z=3x_0^4,\, x_0^2z=3x_1^4.
$
Dividing them gives $(x_1/x_0)^6=1$. On the other hand, the first
equation gives $z=3x_0^4/x_1^2$, and substitution into
$x_0^2x_1^2=3z^2$ yields
$(x_1/x_0)^6=27$, a contradiction. Thus the affine cone over $X$ is
smooth away from the origin, so $X$ is quasismooth. Since $X$ does
not meet the singular locus of the ambient weighted projective space,
it is smooth.

Finally, \eqref{eq:explicit-X} is a member of the family
\eqref{eq:family}, with
$a_2=0$, $a_4=-(x_0^2x_1^2+x_2^4)$, and
$h_6=x_0^6+x_1^6$. Hence $X$ is unirational by
Theorem~\ref{intro:family}.
\end{proof}

For the generic member of the pencil $[x_0:x_1]$, set $t=x_1/x_0$ and $h=1+t^6$. The generic member is
\begin{equation}\label{eq:explicit-generic}
S_t = (w^2=z^3-(t^2u^4+v^4)z+hu^6)
\subset\P_{\C(t)}(1_u,1_v,2_z,3_w).
\end{equation}
The following proposition identifies its degree-two source.

\begin{Proposition}\label{prop:auxiliary}
The surface
\begin{equation}\label{eq:auxiliary}
Z_t = (\Omega^2=hc^4+4h^3(t^2u^4+v^4-2cu^3))
\subset\P_{\C(t)}(1_u,1_v,1_c,2_\Omega)
\end{equation}
is a smooth del Pezzo surface of degree two. The pencil generated by $u$ and $v$ is admissible, and its relative Jacobian contracts to the surface \eqref{eq:explicit-generic}.
\end{Proposition}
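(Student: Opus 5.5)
The plan is to verify the three assertions in turn: the geometry of $Z_t$, the admissibility of the pencil $\langle u,v\rangle$, and the identification of the relative Jacobian. The last one is done with the classical invariant-theoretic description of the Jacobian of a genus-one double cover of $\P^1$.

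\emph{Smoothness of $Z_t$.} The surface $Z_t$ is the double cover of $\P^2_{[u:v:c]}$ branched along the quartic
\[
B=\{hc^4+4h^3(q-2cu^3)=0\},\qquad q=t^2u^4+v^4.
\]
So it is a smooth del Pezzo surface of degree two exactly when $B$ is smooth over $\overline{\C(t)}$, and I would check this with the Jacobian criterion. The partial derivatives are
\[
\partial_c=4hc^3-8h^3u^3,\qquad \partial_v=16h^3v^3,\qquad \partial_u=4h^3(4t^2u^3-6cu^2).
\]
From $\partial_v=0$ we get $v=0$. Then $\partial_u=0$ forces $u=0$ or $c=\tfrac23t^2u$.
\begin{itemize}
\item If $u=0$, then $\partial_c=0$ gives $c=0$, so the point is the origin.
\item Otherwise substitute $c=\tfrac23t^2u$ into $\partial_c=0$ and into the equation of $B$. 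This yields two independent conditions on $t$: an identity of the form $h^2=\text{const}\cdot t^6$ together with a second polynomial relation. These are incompatible for transcendental $t$, using $h=1+t^6$.
\end{itemize}
Since $h\neq0$, the points at $u=v=0$ are harmless as well.

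\emph{Admissibility.} The base locus is $u=v=0$, that is, $\Omega^2=hc^4$. This is the pair of points $[0:0:1:\pm\sqrt h]$: reduced, of length two, and conjugate over $K(\sqrt h)$, as Theorem~\ref{intro:root} predicts. A member $\{\mu u=\lambda v\}$ is the double cover of a line branched in the restriction of $B$, which is a binary quartic in $(c,s)$. Such a member is reducible precisely when that quartic is a square, up to a scalar. The leading term is $hc^4$ and the $c^2$- and $c$-coefficients vanish, so the quartic has the shape $hc^4-8h^3c\,s^3\ell^3+\dots$. A square $(\alpha c^2+\beta cs+\gamma s^2)^2$ with vanishing $c^3$, $c^2$ and $c$ terms would have to be $\alpha^2c^4+\gamma^2s^4$, which has no $cs^3$ term. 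That term is nonzero unless the member has $u=0$, i.e. the member is $\{u=0\}$. On $u=0$ the quartic is $h(c^4+4h^2v^4)$, which has distinct roots and is not a square. The generic member is smooth by Bertini away from the base points; at the two base points the tangent cone of $\Omega^2-hc^4$ is not contained in the plane spanned by the pencil. Hence the pencil is admissible.

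\emph{Relative Jacobian.} For $y^2=a c^4+bc^3s+\gamma c^2s^2+dcs^3+es^4$, the Jacobian is the Weierstrass curve $y^2=x^3-27Ix-27J$, with
\[
I=12ae-3bd+\gamma^2,\qquad J=72a\gamma e+9b\gamma d-27ad^2-27eb^2-2\gamma^3.
\]
Working over the function field of the base, with $[u:v]$ treated as weight-one coordinates, our quartic has $\gamma=d=0$, $a=h$, $b=-8h^3u^3$ and $e=4h^3q$. This gives
\[
I=48h^4q,\qquad J=-6912\,h^9qu^6,
\]
so the Jacobian is $y^2=x^3-1296h^4qx+186624h^9u^6$. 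Since $I$ and $J$ are homogeneous forms of degrees $4$ and $6$ in $(u,v)$, these models glue to a sextic in $\P(1_u,1_v,2_x,3_y)$; blowing up its base point recovers the rational elliptic surface. Contracting the zero section then gives exactly this weighted sextic, by the discussion at the start of Section~\ref{sec:roots}.

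Finally, a weighted change of variables $x=\lambda^2z$, $y=\lambda^3w$ with $\lambda^2=36h^2$ brings the equation to the normal form $w^2=z^3-qz+c_0u^6$. \textbf{The main obstacle} is pinning down the quadratic twist, i.e.\ the precise constant and power of $h$ in front of $u^6$. A naive use of the normalization above produces $4h^3u^6$ rather than $hu^6$; these differ by the square $(2h)^2$ only after also rescaling $u$, which disturbs $q$. I would therefore recompute the Jacobian directly: take $Q_1=[0:0:1:\sqrt h]$ as origin on the generic member, write down the Weierstrass model via the Riemann--Roch spaces of $2Q_1$ and $3Q_1$, and read off the constant and the class of the twist in $K^*/K^{*2}$. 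The expected outcome, consistent with Proposition~\ref{prop:generic-fiber}, is that the twist is by $h$ and the result is $S_t$ of~\eqref{eq:explicit-generic}.
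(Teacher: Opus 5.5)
\textbf{Smoothness and admissibility.} These parts follow the paper's approach. You apply the Jacobian criterion and show the restricted binary quartics are not squares. For smoothness of the generic member you use Bertini plus a check at the base points, where the paper uses the discriminant $4096h^{12}(4M^3-27h^2)\neq0$. That substitution works, but your justification at the base points is garbled. The correct reason is that $\partial_\Omega G=2\Omega\neq0$ at $[0:0:1:\pm\sqrt h]$, so $u,v$ are local coordinates there and every member is smooth at those points. Two small slips:
\begin{enumerate}
\item In the smoothness step, the equation $\partial_c=0$ alone gives $4t^6=27h^2$, which already fails in $\C(t)$; no second condition is needed.
\item In the irreducibility step, the vanishing coefficients are those of $c^3s$ and $c^2s^2$, not the ``$c^2$ and $c$'' ones.
\end{enumerate}

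\textbf{The gap: the Jacobian identification is not proved.} You end with an unresolved discrepancy and a plan. The discrepancy comes from an arithmetic slip, not from a twist. In $hc^4-8h^3u^3c+4h^3q$, the term $-8h^3u^3$ multiplies $c^1$, so it is $d$, and $b=0$; you placed it in $b$. That gave you $J=-27eb^2=-6912h^9qu^6$. This has degree $10$ in $(u,v)$, which contradicts your own statement that $J$ is a sextic form and should have flagged the error.

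The correct values are $I=48h^4q$ and $J=-27ad^2=-1728h^7u^6$. The Jacobian is then $y^2=x^3-1296h^4qx+46656h^7u^6$. The substitution $x=36h^2z$, $y=216h^3w$ turns this into exactly $w^2=z^3-qz+hu^6$, which is \eqref{eq:explicit-generic}. This is the paper's computation, written there on the chart $v=\lambda u$; your homogeneous version is equivalent once corrected.

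Your framing of the ``main obstacle'' is also off.
\begin{enumerate}
\item The model $Y^2=X^3-27IX-27J$ gives the Jacobian over the ground field itself, so there is no quadratic twist to pin down.
\item The mismatch you found ($4h^3$ versus $h$, with the same coefficient $-q$) is not a twist. A twist by $\delta$ rescales the two Weierstrass coefficients by $\delta^2$ and $\delta^3$, whereas your mismatch changes only the constant term. That changes the $j$-invariant, so it would describe a genuinely different elliptic surface.
\item Your fallback, a Riemann--Roch computation with origin $Q_1=[0:0:1:\sqrt h]$, only yields a model over $K(\sqrt h)(\lambda)$. Recovering the $K$-form would require a separate descent argument tracking translation by $Q_1-Q_2$. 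As stated, that step is both unnecessary and incomplete.
\end{enumerate}
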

\begin{proof}
Set
$
G=\Omega^2-hc^4-4h^3(t^2u^4+v^4-2cu^3).
$
The singular locus of $\P_K(1,1,1,2)$ consists of the point
$[0:0:0:1]$, which does not lie on $Z_t$. We first prove that $Z_t$
is quasismooth. At a singular point of its affine cone, the equations
$G_\Omega=G_v=0$ give $\Omega=v=0$, while
$G_c=G_u=0$ give
$
c^3=2h^2u^3,\, u^2(2t^2u-3c)=0.
$
If $u=0$, then $c=0$, so the point is the origin. Otherwise, after
scaling, set $u=1$. Then $c=2t^2/3$, and substitution into
$c^3=2h^2$ gives
$
4t^6=27h^2=27(1+t^6)^2,
$
which is not an identity in $K=\C(t)$. Thus the affine cone is smooth
away from the origin. Since $Z_t$ avoids the singular point of the
ambient weighted projective space, it is smooth. By adjunction,
$
-K_{Z_t}=\OO_{Z_t}(1).
$
Moreover, if $H=\OO_{Z_t}(1)$, then
$H^2=4/(1\cdot1\cdot1\cdot2)=2$. Hence $K_{Z_t}^2=2$, and since
$H$ is ample, $Z_t$ is a del Pezzo surface of degree two.

The base scheme of the pencil generated by $u$ and $v$ is obtained by
setting $u=v=0$, and is therefore given by
$\Omega^2=hc^4$. Since $h\neq0$, it consists of two reduced geometric
points. Moreover, $h=1+t^6$ has simple zeros and hence is not a square
in $K$, so these two points are conjugate over $K(\sqrt h)$.

For a finite parameter $\lambda$, the member $v=\lambda u$ is
\begin{equation}\label{eq:binary-quartic}
\omega^2=q_{t,\lambda}(c,u)
=hc^4-8h^3cu^3+4h^3(t^2+\lambda^4)u^4.
\end{equation}
It is geometrically irreducible. Indeed, if $q_{t,\lambda}$ were the
square of
$\alpha c^2+\beta cu+\gamma u^2$, then the vanishing of the
$c^3u$-coefficient, together with the nonzero $c^4$-coefficient, would
give $\beta=0$. The vanishing of the $c^2u^2$-coefficient would then
give $\gamma=0$, contradicting the nonzero $cu^3$-coefficient. The
member at infinity is
$
\omega^2=hc^4+4h^3v^4
$
and is geometrically irreducible by the same argument. Thus every
geometric member of the pencil is irreducible.

It remains to check that the generic member is smooth and identify its
Jacobian. For a binary quartic
$ac^4+bc^3u+\gamma c^2u^2+dcu^3+eu^4$, set
$
\mathcal I=12ae-3bd+\gamma^2,\,
\mathcal J=72a\gamma e+9b\gamma d-27ad^2-27b^2e-2\gamma^3.
$
For \eqref{eq:binary-quartic}, setting $M=t^2+\lambda^4$ gives
$
\mathcal I=48h^4M,\,
\mathcal J=-1728h^7,
$
and
$
\Delta=\frac{4\mathcal I^3-\mathcal J^2}{27}
=4096h^{12}(4M^3-27h^2)\neq0.
$
Hence the generic member is smooth. Together with the preceding
paragraphs, this proves that the pencil is admissible.

Its generic Jacobian has Weierstrass model
$
Y^2=X^3-27\mathcal I X-27\mathcal J.
$
After the change of variables $X=36h^2x$ and $Y=216h^3y$, this becomes
$
y^2=x^3-Mx+h
=x^3-(t^2+\lambda^4)x+h
$
\cite{Fis08}. On the other hand, the anticanonical pencil of the
degree-one del Pezzo surface \eqref{eq:explicit-generic}, restricted
to $v=\lambda u$, has exactly the same Weierstrass equation. Therefore
the relatively minimal Jacobian of the pencil on $Z_t$ is isomorphic
to the elliptic surface obtained by blowing up the anticanonical base
point of \eqref{eq:explicit-generic}. Contracting its zero section
therefore gives \eqref{eq:explicit-generic}, as claimed.
\end{proof}

\section{Explicit parametrization}\label{sec:computations}

This section is independent of the proof of Theorem~\ref{intro:family}. We first obtain an explicit unirational parametrization of the auxiliary degree-two del Pezzo surface by two successive applications of Manin's construction, and then compose it with the relative Abel--Jacobi map to obtain exact formulas for a dominant rational map to \eqref{eq:explicit-X}.

\subsection*{The auxiliary degree-two surface}

Work on the chart $u=1$ of \eqref{eq:auxiliary}. Let $t,m,r$ be independent parameters, set $h=1+t^6$, and write $f(v,c)=hc^4+4h^3(t^2+v^4-2c)$. The surface contains $p_0=(t^2,0,2h^2t)$. Along $(v,c)=(t^2+s,ms)$, the quadratic Taylor approximation to $\sqrt f$ at $p_0$ is $T_0(s)=2h^2t+2h(2t^6-m)s/t+(6ht^6-(2t^6-m)^2)s^2/t^3$. Set
$$
\begin{aligned}
A_0={}&m^4(1-t^6-t^{12})-8t^6m^3+12(t^{12}-t^6)m^2+16(t^{18}+3t^{12})m+12t^{18}+24t^{12}-4t^6,\\
B_0={}&4t^2h\bigl(m^3-6t^6m^2+6(t^{12}-t^6)m+4(t^{12}-t^6)\bigr).
\end{aligned}
$$
Direct expansion gives $T_0(s)^2-f(t^2+s,ms)=t^{-6}s^3(B_0+A_0s)$. Thus the first Manin step gives $s_0=-B_0/A_0$, $v_0=t^2+s_0$, and $c_0=ms_0$. Setting
$$
Q_0=c_0^2-4t^6v_0c_0+2t^2(3t^6+1)c_0-2t^6(t^6+3)v_0^2+8t^8v_0-2t^4(3t^6+1),\qquad
\omega_0=-\frac{Q_0}{t^3},
$$
we have $\omega_0=T_0(s_0)$ and $\omega_0^2=f(v_0,c_0)$. All formulas below are understood on the open set where their denominators are nonzero.

For the second Manin step, set
$$
\begin{array}{lllll}
f_v=16h^3v_0^3, & f_c=4hc_0^3-8h^3, & f_{vv}=48h^3v_0^2, & f_{cc}=12hc_0^2, & \\ 
\omega_v=\frac{f_v}{2\omega_0}, & \omega_c=\frac{f_c}{2\omega_0}, & \omega_{vv}=\frac{f_{vv}}{2\omega_0}-\frac{f_v^2}{4\omega_0^3}, & \omega_{vc}=-\frac{f_vf_c}{4\omega_0^3}, & \omega_{cc}=\frac{f_{cc}}{2\omega_0}-\frac{f_c^2}{4\omega_0^3}.
\end{array} 
$$
then
$$
\begin{array}{lll}
a =\omega_v+r\omega_c, & b =\frac{\omega_{vv}+2r\omega_{vc}+r^2\omega_{cc}}2, &  \\ 
F_3 = 24hc_0r^3+96h^3v_0, & F_4=24hr^4+96h^3, &  \\ 
C_3 = 2ab-\frac{F_3}{6}, & C_4=b^2-\frac{F_4}{24}, & s_1=-\frac{C_3}{C_4}, \\ 
v_1 = v_0+s_1, & c_1=c_0+rs_1, & \omega_1=\omega_0+as_1+bs_1^2. 
\end{array} 
$$
Hence
\begin{equation}\label{eq:second-identity}
\omega_1^2=hc_1^4+4h^3(t^2+v_1^4-2c_1).
\end{equation}
Indeed, for $T(s)=\omega_0+as+bs^2$, the difference between $T(s)^2$ and the right-hand side of \eqref{eq:second-identity} along $(v,c)=(v_0,c_0)+s(1,r)$ is $s^3(C_3+C_4s)$.

\subsection*{The $2$-covering map}

Set $M=t^2+v_1^4$ and $U(c,u)=hc^4-8h^3cu^3+4h^3Mu^4$. Let $H_U=\frac13\det\Hess(U)$ and $G_U=\frac1{12}\frac{\partial(U,H_U)}{\partial(c,u)}$. With the invariant normalization above, the covariant $2$-covering map is $[\xi:\eta:\zeta]=[-3\omega H_U:27G_U:\omega U]$ and its target is $\eta^2\zeta=\xi^3-432\mathcal I(U)\xi\zeta^2-1728\mathcal J(U)\zeta^3$ \cite{Fis08,Fis12}. This is obtained from the Jacobian model in Proposition~\ref{prop:auxiliary} by $\xi=4X$ and $\eta=8Y$. Since $\mathcal I(U)=48h^4M$ and $\mathcal J(U)=-1728h^7$, the scaling $\xi=144h^2x$ and $\eta=1728h^3y$ gives $y^2=x^3-Mx+h$. Evaluating at $(c,u,\omega)=(c_1,1,\omega_1)$ gives
$$
\begin{array}{lll}
x_E & = & \frac{4h^2(c_1^3+h^2-Mc_1^2)}{\omega_1^2}, \\ 
G_{\mathrm{red}} & = & -8M^2c_1h^2+2Mc_1^5+20Mc_1^2h^2-c_1^6-20c_1^3h^2+8h^4, \\ 
y_E & = & \frac{h^2G_{\mathrm{red}}}{\omega_1^3}.
\end{array} 
$$
Direct expansion yields $y_E^2=x_E^3-Mx_E+h$. Hence
\begin{equation}\label{eq:parametrization}
\phi:\mathbb A^3_{t,m,r}\dashrightarrow X,\qquad
(t,m,r)\longmapsto[1:t:v_1:x_E:y_E]
\end{equation}
is a rational map. The map \eqref{eq:parametrization} is dominant. It is enough to prove that $(t,m,r)\mapsto(t,v_1,c_1)$ is dominant, since the projection of the total auxiliary surface to the coordinates $(t,v,c)$ is generically finite and the $2$-covering map is finite and dominant. This can be checked by one exact specialization. After reduction modulo $101$, at $(t,m,r)=(1,1,0)$ all denominators occurring above are nonzero, $(A_0,\omega_0,C_4)=(87,74,70)$, $(v_1,c_1,\omega_1)=(56,26,79)$, and $\det\frac{\partial(v_1,c_1)}{\partial(m,r)}=45\neq0$. Thus the determinant is not the zero rational function over $\C$.

\section{Sextic double solids}\label{globalSec}
In this section we apply the relative Abel--Jacobi construction to an
explicit pencil of plane cubics over $\C(t)$. After globalizing its
relative Jacobian and running the corresponding two-ray game, we obtain
a one-parameter family of sextic double solids. We study in detail a
distinguished member, proving that it is terminal, factorial, and
unirational, and then extend the construction to a nonempty open subset
of the family.

For $\lambda\in\mathbb A^1$ set
$
e=t,\, d=t^2-1,\, c_\lambda=\lambda t+1,\, h=t+2
$
and consider on $\P^2_{\C(t)}$ the cubic pencil
$$
f_{0,\lambda}=tyz^2-(t^2-1)x^3,
\qquad
f_{1,\lambda}=y^3+(\lambda t+1)y^2z-t(t^2-1)xz^2-t^2(t+2)z^3.
$$
Its relative Jacobian globalizes exactly as below, with $c$ replaced by $c_\lambda$, and the final contraction gives the family $X_\lambda$ of \eqref{eq:sextic-family}.

We verify the distinguished member $\lambda=2$ in detail. At the end of the section we explain why, after shrinking the parameter line around $2$, the construction and the integral class-group computation extend to every member of the resulting open family.

We first record the degree-one del Pezzo surface over $K=\C(t)$ that will be globalized. Let $[x:y:z]$ be coordinates on $\P^2_K$ and consider the pencil generated by
$$
f_0=tyz^2-(t^2-1)x^3,\qquad
f_1=y^3+(2t+1)y^2z-t(t^2-1)xz^2-t^2(t+2)z^3.
$$

\begin{Lemma}\label{lem:uniform-integrality}
	Let $T$ be a noetherian scheme, let $B$ be proper over the ground field,
	and let $f:\mathcal X\to T\times B$ be a proper flat morphism of finite
	presentation. Suppose that, for some $t_0\in T$, every geometric fiber of
	$f$ over $\{t_0\}\times B$ is integral. Then there is an open
	neighborhood $T^\circ\subset T$ of $t_0$ such that every geometric fiber
	over $T^\circ\times B$ is integral.
\end{Lemma}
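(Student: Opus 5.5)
The plan is to combine the constructibility of the geometrically-integral locus with properness of the projection to $T$. By EGA IV$_3$, Théorème~12.2.4, for a proper flat morphism of finite presentation $f:\mathcal X\to T\times B$, the set
\[
W=\{y\in T\times B : \mathcal X_y \text{ is geometrically integral}\}
\]
is open in $T\times B$. The geometric fibers over a point $y$ are integral exactly when $\mathcal X_y$ is geometrically integral over $\kappa(y)$, so the hypothesis says that $\{t_0\}\times B\subset W$. Here $t_0$ need not be closed; in that case we read $\{t_0\}\times B$ as the fiber $p^{-1}(t_0)$ of the projection $p:T\times B\to T$, and the argument is unchanged.

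Next I will set $Z=(T\times B)\setminus W$. This is a closed subset, and it is disjoint from $p^{-1}(t_0)$. Since $B$ is proper over the ground field, $p$ is proper, hence closed. Therefore $p(Z)$ is closed in $T$ and does not contain $t_0$. I will take $T^\circ=T\setminus p(Z)$. This is an open neighborhood of $t_0$ with $p^{-1}(T^\circ)\subset W$, which is exactly the claim that every geometric fiber over $T^\circ\times B$ is integral.

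The step needing the most care is the openness of $W$. EGA IV$_3$~12.2.4 requires $f$ to be proper, flat and of finite presentation, and these are precisely the hypotheses granted. If I want a more hands-on justification, I would argue in two parts. For geometric reducedness I would use openness of the geometrically reduced locus (EGA IV$_3$~12.2.4(v)) together with properness, which lets closed non-reduced loci push down to closed sets. For geometric irreducibility I would use Stein factorization and semicontinuity of the number of geometric connected components in the proper flat setting, plus the fact that a geometrically reduced, geometrically connected proper fiber with $h^0(\OO)=1$ varies openly. Citing 12.2.4 directly is cleaner, and that is what I will do. Noetherianity of $T$ makes $f$ of finite presentation automatically, and it ensures the complement $Z$ is handled without subtleties.
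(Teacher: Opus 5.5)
Your proof is correct and is essentially the paper's argument: you get openness of the geometrically integral locus from EGA IV$_3$, Th\'eor\`eme~12.2.4, and you use that the projection $T\times B\to T$ is proper, hence closed, so the image $p(Z)$ of the bad locus $Z$ is closed and misses $t_0$, giving $T^\circ=T\setminus p(Z)$. One caution about your optional \emph{hands-on} aside: geometric connectedness together with reducedness and $h^0(\OO)=1$ does not give irreducibility (a conic that splits into two lines is a counterexample), so that sketch could not replace the citation, but your actual proof does not rely on it.
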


\begin{proof}
	The locus in $T\times B$ over which the geometric fiber of $f$ is integral
	is open \cite[Th\'eor\`eme~12.2.4]{EGAIV3}. Let $Z$ be its closed
	complement. Since $B$ is proper, the projection $T\times B\to T$ is
	proper, so the image of $Z$ is closed. It does not contain $t_0$; its
	complement is the required open neighborhood.
\end{proof}

\begin{Lemma}\label{baseirr}
The base scheme $\Sigma$ of this pencil is reduced, irreducible over $K$, and has degree nine.
\end{Lemma}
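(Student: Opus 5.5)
We plan to work directly with the two cubics
$$
f_0=tyz^2-(t^2-1)x^3,\qquad f_1=y^3+(2t+1)y^2z-t(t^2-1)xz^2-t^2(t+2)z^3
$$
over $K=\C(t)$, and to reduce everything to a single univariate polynomial. Bézout bounds the length of $\Sigma=\{f_0=f_1=0\}$ by nine, with equality since the cubics share no component ($f_0$ is irreducible, being linear in $y$ with coprime coefficients $tz^2$ and $(t^2-1)x^3$, and $f_1\notin (f_0)$). We check the line $z=0$ first. There $f_0=-(t^2-1)x^3$ and $f_1=y^3$, so any common point has $x=y=0$, impossible. Hence $\Sigma$ lies in the affine chart $z=1$, and the degree is nine.

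On the chart $z=1$ we solve $f_0=0$ for $y$, namely $y=(t^2-1)x^3/t$. This exhibits the curve $\{f_0=0\}$, away from $z=0$, as the graph of a morphism $x\mapsto y$, so it is isomorphic to $\mathbb A^1_x$. Substituting into $f_1$ gives
$$
g(x)=\frac{(t^2-1)^3x^9}{t^3}+(2t+1)\frac{(t^2-1)^2x^6}{t^2}-t(t^2-1)x-t^2(t+2),
$$
a polynomial of degree exactly nine in $x$. The scheme $\Sigma$ is then isomorphic to $\Spec K[x]/(g)$. Consequently $\Sigma$ is reduced if and only if $g$ is separable, and irreducible over $K$ if and only if $g$ is irreducible in $K[x]$. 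Reducedness will follow from irreducibility, since we are in characteristic zero.

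To prove irreducibility, we first clear denominators by multiplying by $t^3$. The substitution $x=X/(t^2-1)$ then gives a polynomial in $X$ that is monic up to a unit. The planned tool is a Newton polygon, or Eisenstein-type, argument at a suitable place of $\C(t)$ (for instance $t=-1$, $t=1$, $t=0$ or $t=\infty$), looking for a place where the polygon has a single segment of slope with denominator nine. That would force every root to generate a totally ramified extension of degree nine, and hence force irreducibility.

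If no single place works, we would combine two places. Suppose the polygons at two places give factor-degree constraints, for instance local factorizations of types $(3,6)$ and $(1,8)$. Any global factorization must be compatible with both, and incompatible constraints exclude every nontrivial factorization. As a fallback, we would specialize $t$ to a rational value where the nine-point base locus stays reduced and check irreducibility of $g$ over $\QQ$ by reduction modulo suitable primes, recording this as a Magma check (Section~\ref{appendix:magma}). Note that specialization shows irreducibility over $\QQ(t)$; to reach $\C(t)$ we need geometric irreducibility of the curve $\{g=0\}\subset\mathbb A^2_{t,x}$, which is again an irreducibility statement.

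The main obstacle is this irreducibility step. Dealing with the point $z=0$ and the reduction to a single polynomial is routine. Establishing irreducibility of a degree-nine polynomial over $\C(t)$ needs a carefully chosen place with the right Newton polygon, and finding such a place is where the real work lies.
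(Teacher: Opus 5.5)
Your reduction is the same as the paper's. There are no base points on $z=0$, you eliminate $y$ on $z=1$ to get $\Sigma\simeq\Spec K[x]/(g)$ with $t^3g=P(t,x)=(t^2-1)^3x^9+t(2t+1)(t^2-1)^2x^6-t^4(t^2-1)x-t^5(t+2)$, and reducedness follows from irreducibility. The gap is that you never establish irreducibility over $K=\C(t)$, which is the actual content of the lemma. Moreover, your primary strategy fails at every place you name. At $t=0$ the Newton polygon has vertices $(0,5),(1,4),(6,1),(9,0)$, so over $\C((t))$ the polynomial splits into irreducible factors of degrees $1,5,3$. At $t=\pm1$ the points $(0,0),(6,2),(9,3)$ lie on a single segment of slope $1/3$. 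Its residual cubic ($8y^3+12y^2-3$, resp. $-8y^3+4y^2+1$) is squarefree, so the local degrees are $3,3,3$. At $t=\infty$ all four coefficients have degree six in $t$, so the polygon is horizontal. No slope with denominator nine occurs, and combining these places still allows a global splitting of type $3+6$. Your fallback gives irreducibility only over $\QQ(t)$, and you leave the passage to $\C(t)$ open. So the proposal stops exactly where the work is.

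The paper closes this step with a smooth rational point. Specializing at $t=3$ and reducing modulo $17$ gives $x^9-8x^6-x-4$, which is irreducible. Since the leading coefficient survives, $P$ is irreducible over $\QQ(t)$. It is then irreducible in $\QQ[t,x]$, because $(t^2-1)^3$ and $t^5(t+2)$ are coprime. The affine curve $P=0$ contains the $\QQ$-point $(t,x)=(-2,0)$, which is smooth since $P_x(-2,0)=-48\neq0$. The group $\Gal(\bar\QQ/\QQ)$ permutes the geometric components of this curve transitively, and only one component passes through this Galois-fixed smooth point. Hence the curve is geometrically integral, and $P$ is irreducible over $\C(t)$. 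Alternatively, your own ingredients suffice if you combine them as follows. Irreducibility over $\QQ(t)$ forces the irreducible factors over $\bar\QQ(t)$ to be Galois conjugate, hence all of one degree dividing nine. The degree-five local factor at $t=0$ must lie in one of them, so that common degree is nine. Irreducibility over $\bar\QQ(t)$ then persists over $\C(t)$.
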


\begin{proof}
There is no base point on $z=0$. On $z=1$, eliminating $y$ gives
$
P(t,x)=(t^2-1)^3x^9+t(2t+1)(t^2-1)^2x^6-t^4(t^2-1)x-t^5(t+2).
$
For $t=3$, reduction modulo $17$ gives, up to a nonzero scalar, $x^9-8x^6-x-4$, which is irreducible. Hence $P$ is irreducible over $\QQ(t)$. The curve $P(t,x)=0$ has the smooth rational point $(-2,0)$, since $P_x(-2,0)=-48$; therefore it is geometrically irreducible, so $P$ is irreducible over $K$. Thus $\Sigma\simeq\operatorname{Spec}K[x]/(P)$ is reduced, irreducible, and of degree nine.
\end{proof}

\begin{Lemma}\label{admissiblePencil}
The pencil $\langle f_0,f_1\rangle\subset|-K_{\P^2_K}|$ is admissible.
\end{Lemma}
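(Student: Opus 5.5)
To prove Lemma~\ref{admissiblePencil} I will check the three conditions in the definition of admissibility for a pencil in $|-K_{\P^2_K}|=|\OO(3)|$: reduced base scheme, smooth generic member, and irreducibility of every geometric member.

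\emph{Base scheme.} The first condition is exactly Lemma~\ref{baseirr}, which says $\Sigma$ is reduced of length nine.

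\emph{Smooth generic member.} I will show that the generic member $f_0+\mu f_1$, with $\mu$ transcendental over $K$, is smooth by finding one smooth member. It suffices to specialize $t$ and $\mu$ to numbers, say $t=3$ and a small integer $\mu$, and check that the discriminant of the resulting plane cubic is nonzero; it is convenient to work modulo a prime such as $17$, as in Lemma~\ref{baseirr}. Nonvanishing of the discriminant is an open condition, so the discriminant of the pencil is a nonzero polynomial in $\mu$ over $K$, and the generic member is smooth.

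\emph{Irreducibility of geometric members.} This is the main point, and I would handle it with Lemma~\ref{baseirr} and Galois theory rather than by computation. Suppose some member $D=f_0+\mu f_1$, with $\mu\in\bar K\cup\{\infty\}$, were reducible over $\bar K$. Then $D$ would be a line $\ell$ plus a conic $q$, or three lines, possibly repeated. Every member contains $\Sigma$, which is nine distinct geometric points forming a single $\Gal(\bar K/K)$-orbit because $P$ is irreducible over $K$.

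\emph{Ruling out three lines.} Two distinct cubics in the pencil meet in exactly the nine points of $\Sigma$, since the base scheme has length nine and the intersection is transverse at each point. If $D$ contains a line $\ell$, then $\ell\cap C$ has at most three points for any other member $C$, because $\ell\not\subset C$ when the base locus is finite. Hence $\ell$ contains at most three points of $\Sigma$, and the residual conic contains the rest. If $D$ is three lines, each contains at most three points of $\Sigma$, so each contains exactly three and the lines are distinct. The partition of $\Sigma$ into these three collinear triples is canonical only up to the choice of $\mu$. The set of reducible members is finite and $\Gal$-stable, so the orbit of this $\mu$ is finite, and the stabilizer of $\mu$ would act on $\Sigma$ preserving a partition into triples. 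This does not immediately contradict transitivity.

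\emph{Direct check instead.} I would therefore fall back on a direct argument. A line containing three points of $\Sigma$ gives three roots $x_i,x_j,x_k$ of $P$ whose points $(x,y(x))$ are collinear, where $y$ is expressed rationally via elimination. I would instead argue via the condition that $D$ contains a line: a line meets $f_0$ and $f_1$ in proportional cubic restrictions. Then I would specialize at $t=3$ modulo $17$, where the base scheme is a single irreducible degree-nine point over $\mathbb F_{17}$ with Frobenius acting as a $9$-cycle. A line through three of these points would be defined over $\mathbb F_{17^3}$ or $\mathbb F_{17^9}$, and the $9$-cycle would map it to three lines with collinear triples covering $\Sigma$. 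That forces a member to be a triangle, so those special members need to be checked finitely.

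\emph{Expected obstacle and fallback.} This last step is where I expect the difficulty. If it resists a clean argument, the fallback is a computer check in the spirit of Section~\ref{appendix:magma}: compute the variety of $(\mu,\ell)$ with $\ell\subset f_0+\mu f_1$, which is an elimination in the line coordinates, and show it is empty over $\bar K$ by specializing $t$ and using upper semicontinuity of fiber emptiness, in the style of Lemma~\ref{lem:uniform-integrality}.
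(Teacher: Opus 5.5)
\textbf{Verdict: there is a gap.} The base scheme and smooth generic member are handled correctly, but integrality of every geometric member is never established, and that is the substance of the lemma.

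You correctly see that a linear component of a member would contain three collinear points of $\Sigma$, since any two members meet transversally exactly in $\Sigma$. You also correctly see that transitivity of $\Gal(\bar K/K)$ on $\Sigma$ does not exclude this. The Frobenius step does not close the gap, for three reasons.
\begin{enumerate}
\item Frobenius at $t=3$ modulo $17$ is not an element of $\Gal(\bar K/K)$. To use it you need an explicit specialization step. For example: the locus of reducible cubics is closed and the pencil line is proper, so a reducible member over $\bar K$ would yield one over $\bar{\mathbb F}_{17}$.
\item More seriously, your claim that a collinear triple forces a triangle member holds only when the triple is stable under the cube of the $9$-cycle, i.e.\ when the line is defined over $\mathbb F_{17^3}$. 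If the triple has trivial stabilizer, its Frobenius orbit consists of nine distinct collinear triples, each point lying on three of them, as in the cyclic pattern $\{i,i+1,i+3\}\bmod 9$. This gives no triangle, and nothing you say rules it out.
\item Neither the ``finite check'' of the triangle members nor the fallback elimination is carried out. As written, irreducibility is asserted rather than proved.
\end{enumerate}

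The missing idea is the one the paper uses: look at the discriminant of the pencil. The paper computes
\[
c_4(g_\mu)^3-c_6(g_\mu)^2=-1728\mu^2t^3(t-1)^4(t+1)^4Q_8(t,\mu).
\]
It then checks directly that the members $\mu=0$ and $\mu=\infty$, where this vanishes to order two, are the integral cuspidal cubics $f_0=0$ and $f_1=0$. It shows $Q_8$ is squarefree over $K$ by specializing at $t=2$, where $Q_8$ keeps degree eight and is coprime to its derivative. Reducible and cuspidal cubics lie in the singular locus of the discriminant hypersurface, so any such member would give a multiple root of the restricted discriminant. Hence the remaining eight singular members are irreducible nodal cubics. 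This replaces all the configuration combinatorics by one squarefreeness check.

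If you want to keep your own reduction instead, a complete certificate would be the following. Make the specialization argument above, noting that at $t=3$ modulo $17$ the base scheme is still nine reduced points because $P(3,x)$ is irreducible of degree nine there. Then verify that none of the $84$ triples of those nine base points over $\mathbb F_{17^9}$ is collinear.
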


\begin{proof}
Write $g_\mu=f_0+\mu f_1$ on the affine chart $U=1$ of the parameter
line, where $\mu=V/U$. If $\mu\neq0$, a singular point of $g_\mu$ cannot
lie on $z=0$. After setting $z=1$, the first derivative equations reduce,
after eliminating $x$, to
$
E_1=\mu y(3y+4t+2)+t=0,
$
and
$
E_2=27\bigl(2y^3+(2t+1)y^2+t^2(t+2)\bigr)^2
     +4\mu t^3(t^2-1)^2=0.
$
Indeed, $g_x=0$ gives $3x^2+\mu t=0$, while $g_y=0$ is $E_1=0$;
squaring the equation $g_z=0$ and using these two relations gives
$\mu^2E_2=0$. Conversely, the same formulas reconstruct $x$ and give all
three first derivative equations. Thus, up to multiplication by an element
of $K^*$, the polynomial parametrizing the remaining singular members is
$
Q_8(t,\mu)=\frac{1}{27t^3}\operatorname{Res}_y(E_1,E_2),
$
which has degree eight in $\mu$.

An exact computation of Fisher's invariants \cite{Fis08,Fis12}, with a
compatible normalization of $Q_8$, gives
\begin{equation}\label{discriminant}
c_4(g_\mu)^3-c_6(g_\mu)^2
=-1728\mu^2t^3(t-1)^4(t+1)^4Q_8(t,\mu).
\end{equation}
At $t=2$, the polynomial $Q_8(2,\mu)$ is a nonzero scalar multiple of
$$
15552\mu^8+382496\mu^7+172992\mu^6-1099440\mu^5
 +1592544\mu^4-121056\mu^3+36435\mu^2-1200\mu+192.
$$
This polynomial is coprime to its derivative. Since its degree is still
eight, the resultant of $Q_8$ and $\partial Q_8/\partial\mu$ is nonzero,
and hence $Q_8$ is squarefree over $K$.

The member $f_0=0$ is an integral cuspidal cubic with unique singular point
$[0:1:0]$; on the chart $y=1$ its local equation begins with
$tz^2-(t^2-1)x^3$. The member $f_1=0$ is an integral cuspidal cubic with
unique singular point $[1:0:0]$; on the chart $x=1$ its quadratic term is
$-t(t^2-1)z^2$ and its cubic term contains $y^3$. The other singular
members correspond to the eight simple roots of $Q_8$. A simple point of
the discriminant of plane cubics represents an irreducible nodal cubic;
reducible cubics and cuspidal cubics lie in the singular locus of the
discriminant. Therefore these eight members are irreducible nodal cubics,
and all remaining members are smooth. Lemma~\ref{baseirr} gives the reduced
base scheme, so the pencil is admissible.
\end{proof}

\begin{Proposition}\label{prop:clop-p2}
The $\CLOP$ construction applied to this pencil gives a smooth del Pezzo surface $S/K$ of degree one with $\rho(S)=1$ and a dominant rational map $\P^2_K\dashrightarrow S$ of degree nine. A weighted sextic model of $S$ is
\begin{equation}\label{jacobianDP1}
y^2=4Cx^3+4d(-3eU^2V^2+c^2UV^3)x-4eU^5V+c^2U^4V^2-18ehcU^3V^3+(4hc^3-27e^2h^2)U^2V^4-4ed^2UV^5,
\end{equation}
where $e=t$, $d=t^2-1$, $c=2t+1$, $h=t+2$, $C=ed$, and $[U:V:x:y]$ have weights $1,1,2,3$.
\end{Proposition}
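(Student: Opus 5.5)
The plan has four steps: build the CLOP output, compute its model, determine $\rho(S)$, and read off the degree of the map.

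\emph{Construction.} By Lemma~\ref{admissiblePencil} the pencil $\langle f_0,f_1\rangle$ is admissible. Its base scheme $\Sigma$ is reduced of degree nine by Lemma~\ref{baseirr}. Let $Y=\Bl_\Sigma\P^2_K$, a genus-one fibration $\theta:Y\to\P^1_K$, and let $\vartheta:J\to\P^1_K$ be its relative Jacobian. By \cite[Section~3]{CLOP26}, $J$ is a rational elliptic surface with irreducible fibers. Contracting its zero section gives a smooth del Pezzo surface $S$ of degree one. Take $A=\OO(1)$ in \cite[Lemma~3.4]{CLOP26}. Then $n=A\cdot(-K_{\P^2})=3$, so the relative Abel--Jacobi map $\P^2_K\dashrightarrow S$ is dominant of degree $n^2=9$. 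Concretely, on the generic fiber it is $P\mapsto 3P-H|_{Y_\eta}$.

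\emph{Weierstrass model.} I would compute Fisher's invariants $c_4(g_\mu)$ and $c_6(g_\mu)$ of the ternary cubic $g_\mu=Uf_0+Vf_1$, viewed as forms in $[U:V]$ of degrees $4$ and $6$. The Jacobian of the generic member is then $y^2=x^3-27c_4x-54c_6$ \cite{Fis08,Fis12}. Homogenized in $[U:V]$ with weights $(1,1,2,3)$, this is a sextic in $\P(1,1,2,3)$. Such a sextic is exactly the contraction of the zero section of the minimal model, because the base point is $x=y=\infty$, i.e. the $[0:0:1:1]$-type point. A rescaling of $x,y$ by powers of $C=ed$ (note that $C$ is the $x^3$ coefficient) together with a shift in $x$ should bring it to \eqref{jacobianDP1}. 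This is a direct symbolic check that I would do by computer.

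It remains to confirm that \eqref{jacobianDP1} is quasismooth. This follows from the discriminant identity \eqref{discriminant}, since that identity shows $J$ has only irreducible fibers, and these are nodal or cuspidal. So the weighted surface has only the expected geometry, and smoothness agrees with the CLOP output.

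\emph{Picard rank.} This is the main obstacle. We have $\rho(S)=\rho(J)-1$, and $\Pic(J)$ is spanned over $K$ by the zero section, the fiber, and the Galois-invariant part of $\MW(J_{\bar K})$, because all fibers are irreducible. So it suffices to show $\MW(J/\P^1_K)$ has rank $0$ over $K$.

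My first attempt would be a counting argument on $Y$. Since $\Sigma$ is irreducible of degree nine (Lemma~\ref{baseirr}), $\Pic(Y)$ has rank two over $K$, generated by $H$ and the exceptional divisor. Every $K$-section of $J$ corresponds to a class of degree zero on the generic fiber. Thus the rational Mordell--Weil rank equals $\rho(Y)-2=0$, by the Shioda--Tate formula applied over $K$ to both $Y$ and $J$; these share $\Pic^0$ of the generic fiber up to torsion. Torsion does not affect $\rho$. Hence $\rho(J)=2$ and $\rho(S)=1$.

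If that descent comparison needs care, the fallback is to specialize at a good prime and a value of $t$. There I would show that the Galois orbit structure on the $240$ lines has no invariant vectors in $E_8$, by checking that the Frobenius-type monodromy acts without fixed nonzero vectors.
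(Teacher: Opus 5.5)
Your proposal is correct. For the construction, the degree-nine Abel--Jacobi map ($A=\OO_{\P^2}(1)$, $n=3$) and the weighted model, it coincides with the paper. The paper obtains \eqref{jacobianDP1} from Fisher's Jacobian $y_F^2=x_F^3-27c_4x_F-54c_6$ by the pure rescaling $x_F=36Cx$, $y_F=108Cy$. No shift in $x$ is needed, since neither equation has an $x^2$-term.

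The real difference is the step $\rho(S)=1$. The paper simply invokes \cite[Corollary~3.8]{CLOP26}, which applies because the base scheme is one Galois orbit. You instead reprove it via Shioda--Tate over $K$, and your argument goes through:
\begin{enumerate}
\item $\rho(Y)=2$, since the nine exceptional curves form a single orbit.
\item All fibers of $Y\to\P^1_K$ are geometrically integral, so the kernel of $\Pic(Y)\to\Pic(Y_\eta)$ has rank one. Hence $\Pic^0(Y_\eta)$ is torsion.
\item $\Pic^0(Y_\eta)$ has finite index in $J_\eta(K(\P^1))$, because a norm argument shows the cokernel is killed by the index of $Y_\eta$. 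So $\MW(J/\P^1_K)$ is finite.
\item Therefore $\rho(J)=2$ and $\rho(S)=\rho(J)-1=1$.
\end{enumerate}
This is exactly the mechanism behind the CLOP formula $\rho(S)=\rho(Z)+(\text{number of orbits})-1$. Your route makes the proof self-contained; the paper's citation is shorter.

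Two small points. First, derive smoothness of \eqref{jacobianDP1} from the irreducibility of the fibers of $J$, which comes from admissibility and CLOP, not from \eqref{discriminant} alone. The order-two zeros of the discriminant at $\mu=0$ and $\mu=\infty$ do not distinguish type $II$ from type $I_2$; you need that $f_0$ and $f_1$ are cuspidal. Second, your fallback would not work as stated. Specializing $t$ to a complex number kills the Galois action entirely. Frobenius elements obtained after reducing modulo a prime lie in the arithmetic monodromy group, not the geometric one. So their lack of fixed vectors does not control the invariants of $\Gal(\overline{\C(t)}/\C(t))$. Your main argument makes the fallback unnecessary anyway.
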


\begin{proof}
By Lemma~\ref{admissiblePencil}, \cite[Section~3]{CLOP26} applies. With $A=\OO_{\P^2}(1)$, the relative Abel--Jacobi map has degree $(A\cdot(-K_{\P^2}))^2=9$ \cite[Lemma~3.4 and Example~3.5]{CLOP26}. Since the base scheme is one Galois orbit, \cite[Corollary~3.8]{CLOP26} gives $\rho(S)=1$. Formula \eqref{jacobianDP1} follows from the classical Jacobian equation in terms of $c_4,c_6$ after the change $x_F=36Cx$, $y_F=108Cy$ \cite{Fis08}.
\end{proof}

Let $[r:s]$ be homogeneous coordinates on $\P^{1}$. Let
$\mathcal{P}$ be the simplicial toric variety with Cox ring
$\C[r,s,p,q,\allowbreak z,v]$
and irrelevant ideal $(r,s)\cap(p,q,z,v)$. In the basis $(F,H)$, $\deg(r)=\deg(s)=F$, $\deg(p)=H$, $\deg(q)=F+H$, $\deg(z)=F+2H$, and $\deg(v)=3F+3H$. The natural morphism $\mathcal{P}\rightarrow\P^{1}_{[r:s]}$ has fibers $\P(1,1,2,3)$. Set
$e=r$, $d=r^2-s^2$, $c=2r+s$, $h=r+2s$ and $C=ed=r(r^2-s^2)$.

\begin{Construction}\label{Yconstruction}
Let $\mathcal{Y}\subset\mathcal{P}$ be the hypersurface defined by
\begin{equation}\label{globalY}
v^2=4Cz^3+4d(-3eq^2p^2+c^2qp^3)z-4eq^5p+c^2q^4p^2-18ehcq^3p^3+(4hc^3-27e^2h^2)q^2p^4-4ed^2qp^5.
\end{equation}
Every term has class $6H+6F$. In particular, $\mathcal{Y}$ is a relative Cartier divisor, and since it contains no fiber of $\mathcal{P}\rightarrow\P^{1}$, the morphism $\pi:\mathcal{Y}\rightarrow\P^{1}$ is flat. On the chart $s=1$, the generic fiber of $\mathcal{Y}\rightarrow\P^{1}$ is equation \eqref{jacobianDP1}, with $t=r/s$, $U=q$ and $V=p$. Thus its generic fiber is the surface $S$ of Proposition~\ref{prop:clop-p2}.
\end{Construction}

\begin{Lemma}\label{fibersintegral}
Every fiber of $\pi:\mathcal{Y}\rightarrow\P^{1}$ is geometrically integral. In particular, $\mathcal{Y}$ is normal.
\end{Lemma}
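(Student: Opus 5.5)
Over $\P^1$ the fibers are sextic surfaces $\mathcal Y_b\subset\P(1,1,2,3)$. The plan is to prove geometric integrality fiber by fiber, treating the generic fiber separately from the finitely many special ones. The generic fiber is the smooth surface $S$ of Proposition~\ref{prop:clop-p2}, so it is geometrically integral. By the openness of integral geometric fibers \cite[Th\'eor\`eme~12.2.4]{EGAIV3}, which is the argument of Lemma~\ref{lem:uniform-integrality} with $T=\P^1$ and $B$ a point, the fibers are geometrically integral over a dense open subset of $\P^1$. It therefore suffices to treat the closed points $b\in\P^1$, and in practice only the finitely many where the fiber could degenerate: the zeros of $C=r(r^2-s^2)$, namely $r=0$ and $r=\pm s$, the zero of $h$, the point $s=0$, and the points where the discriminant of the fiber vanishes.

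The criterion for a fiber is elementary. A fiber has the form $v^2=g_b(p,q,z)$ in $\P(1,1,2,3)$. Such a surface is integral, and geometrically integral since we work over $\C$, as soon as $g_b$ is not a square in $\C[p,q,z]$. If $g_b$ is not a square, $v^2-g_b$ is irreducible, and a hypersurface in weighted projective space cut out by an irreducible polynomial is integral.

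To check this, note that a weight-$6$ square would be $(\alpha z p+\beta z q+\gamma(p,q))^2$ with $\gamma$ a binary cubic, and its $z^3$ coefficient is zero.
\begin{itemize}
\item Whenever $C(b)\neq0$, the term $4Cz^3$ survives, so $g_b$ is not a square.
\item At $r=0$, so $e=0$, and at $r=\pm s$, so $d=0$, the $z$-dependence partly disappears. There I restrict to $z=0$ and show that the binary sextic is not a square of a cubic.
\end{itemize}
For $e=0$ the binary sextic is $c^2q^4p^2+4hc^3q^2p^4$, with $c=s$ and $h=2s$. A square would have to be $(\alpha q^2p+\beta qp^2)^2\cdot$(unit), and comparing coefficients gives a contradiction because there is no $q^3p^3$ term while both extreme coefficients are nonzero. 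For $e=0$ I must also account for the surviving $z$-term $4d\,c^2qp^3z$, which is linear in $z$; a square of a weight-$3$ form contains no term linear in $z$ with a pure-$(p,q)$ partner of this shape unless a compatible cubic is present, and I will check this by comparing coefficients. For $d=0$ the $z$-terms vanish entirely, and I check directly that $-4eq^5p+c^2q^4p^2-18ehcq^3p^3+(4hc^3-27e^2h^2)q^2p^4$ is not a square. Its factor $qp$ appears to odd total power unless cancellation occurs, and I verify this explicitly for $r=\pm s$.

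The hard step is the enumeration of the special fibers. The fibers where $C\neq0$ but the surface is singular are still integral by the $z^3$ argument, so the real content lies only in the three fibers with $C=0$ and in checking that the coefficient comparisons there actually fail; I expect this to reduce to a few explicit coefficient checks.

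Normality then follows from the fiber conditions. The morphism $\pi$ is flat with geometrically integral, hence reduced, one-dimensional-base fibers, and $\mathcal Y$ is a hypersurface in the simplicial toric variety $\mathcal P$, so it is Cohen--Macaulay and $S_2$. For $R_1$: the singular locus of $\mathcal Y$ meets each fiber in a proper closed subset, because the generic point of each integral fiber is a smooth point. This holds since $\mathcal Y$ is flat over a smooth curve with a reduced fiber, so the fiber is a Cartier divisor with a generically smooth total space there. Hence the singular locus has codimension at least $2$, and Serre's criterion gives normality.
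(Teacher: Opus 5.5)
Your proposal is correct and follows essentially the paper's proof. The structure is the same: the branch sextic is not a square because the $z^3$ term survives when $C\neq0$; the three zeros of $C$ are checked by hand; and normality comes from Serre's criterion, with $S_2$ from the Cartier divisor and $R_1$ from regularity at the generic point of each integral fiber. At $r=0$, your restriction to $z=0$ is a valid substitute for the paper's odd $q$-valuation. At $r=\pm s$, the clean reason is that the $q^5p$ coefficient $-4e$ is nonzero, so $p$ divides the sextic exactly once.

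The openness preamble and the extra worry about the $z$-linear term at $r=0$ are unnecessary. In the $R_1$ step you should explicitly invoke smoothness of the generic fiber. Knowing only that the singular locus meets every closed fiber in a proper closed subset would not rule out a horizontal singular surface.
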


\begin{proof}
A fiber has equation $v^2=F_6(p,q,z)$ in $\P(1,1,2,3)$. If $C=r(r^2-s^2)\neq0$, then $F_6$ has a nonzero $z^3$ term. A weighted form of degree three in $p,q,z$, where $\deg p=\deg q=1$ and $\deg z=2$, is at most linear in $z$, so its square has no $z^3$ term. Hence $F_6$ is not a square.

It remains to consider the three zeros of $C$. On the chart $s=1$, after multiplying by a nonzero scalar, the right hand side of \eqref{globalY} specializes to $-p^2q(-8p^2q+4pz-q^3)$, $pq^2(81p^3-162p^2q+9pq^2-4q^3)$ and $-pq^2(31p^3+18p^2q-pq^2-4q^3)$ for $r=0,1,-1$, respectively. The first has odd valuation along $q=0$, while the last two have odd valuation along $p=0$. Thus none is a square. Since the ground field has characteristic different from two, $v^2-F_6$ is irreducible in every case.

The simplicial toric variety $\mathcal P$ is Cohen--Macaulay, and
$\mathcal Y$ is an effective Cartier divisor, so $\mathcal Y$ is
Cohen--Macaulay and in particular satisfies $S_2$. Its generic fiber is
smooth. At the generic point of a special fiber, flatness over the smooth
base curve and generic reducedness of that integral fiber show that the
one-dimensional local ring is a discrete valuation ring. Hence
$\mathcal Y$ is regular in codimension one. Serre's criterion gives
normality.
\end{proof}

\begin{Proposition}\label{ClY}
One has $\Cl(\mathcal{Y})=\ZZ H\oplus\ZZ F$. In particular, $\mathcal{Y}$ is $\QQ$-factorial and $\rho(\mathcal{Y}/\P^{1})=1$.
\end{Proposition}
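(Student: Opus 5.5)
The plan is to compute $\Cl(\mathcal Y)$ by restricting to the generic fiber and then accounting for vertical divisors. Since $\mathcal Y$ is normal by Lemma~\ref{fibersintegral}, we have the standard localization exact sequence
\[
\bigoplus_{b\in\P^1}\ZZ[\mathcal Y_b]\longrightarrow\Cl(\mathcal Y)\longrightarrow\Cl(S)\longrightarrow0,
\]
where $S=\mathcal Y_\eta$ is the generic fiber. The left-hand term is generated by the irreducible components of closed fibers. By Lemma~\ref{fibersintegral} every fiber $\mathcal Y_b$ is integral, so each vertical prime divisor is a full fiber. All fibers are linearly equivalent to $F$, the pullback of $\OO_{\P^1}(1)$. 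Hence the image of the left-hand map is $\ZZ F$.

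Next I would identify $\Cl(S)$. By Proposition~\ref{prop:clop-p2}, $S$ is a smooth del Pezzo surface of degree one with $\rho(S)=1$. Since $S$ is smooth, $\Cl(S)=\Pic(S)$. Over $K=\C(t)$ the Brauer obstruction vanishes by Tsen's theorem, and $S(K)\neq\varnothing$ since it contains the anticanonical base point. So $\Pic(S)=\Pic(S_{\bar K})^{\Gal}$, which has rank one. It contains $-K_S$, a class of self-intersection $1$. Because the intersection form is unimodular and $K_S^2=1$, the invariant lattice of rank one is generated by $K_S$, so $\Cl(S)=\ZZ K_S$. The class $H$ on $\mathcal Y$, namely $\OO_{\mathcal P}(H)|_{\mathcal Y}$ (for instance the divisor $\{p=0\}$), restricts to $\OO_S(1)=-K_S$. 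Hence $\Cl(\mathcal Y)\to\Cl(S)$ is split by $H$, and $\Cl(\mathcal Y)$ is generated by $H$ and $F$.

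It remains to show that $H$ and $F$ are independent. Suppose $aH+bF\sim0$. Restricting to the generic fiber gives $a=0$. Intersecting with a curve that is horizontal over $\P^1$ (for example a section, or $H^2$ as a $1$-cycle, which has $F\cdot H^2=H^2|_{S}=1$) then gives $b=0$. Alternatively, $F$ is nonzero because $\pi$ is nonconstant and $F$ is the pullback of an ample class. Thus $\Cl(\mathcal Y)=\ZZ H\oplus\ZZ F$.

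Since both $H$ and $F$ are restrictions of Cartier classes from the simplicial toric variety $\mathcal P$ (indeed $F$ is Cartier and $H$ is $\QQ$-Cartier), every Weil divisor is $\QQ$-Cartier, so $\mathcal Y$ is $\QQ$-factorial, with $\rho(\mathcal Y/\P^1)=2-1=1$.

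The main obstacle I expect is the justification that $\Pic(S)$ is generated by $K_S$ rather than merely of rank one. I would handle it via the unimodularity argument above, together with checking that the description of $\rho(S)=1$ in \cite[Corollary~3.8]{CLOP26} concerns $\Pic(S)$ over $K$. If needed, I would use the $K$-point $O$ to identify $\Pic(S)$ with the Galois invariants.
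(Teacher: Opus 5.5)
Your proposal is correct and follows essentially the same route as the paper. Both arguments reduce to vertical divisors via the generic fiber, and use integrality of the fibers to see that these contribute exactly $\ZZ F$. Both then use $\rho(S)=1$, primitivity of $-K_S$ (from $K_S^2=1$) and $H|_S=-K_S$ to get the $\ZZ H$ summand, check independence by degrees on vertical and horizontal curves, and deduce $\QQ$-factoriality from the toric ambient space. Two cosmetic points: primitivity of $K_S$ needs only integrality of the pairing, since $K_S=n\ell$ forces $n^2\ell^2=1$, not unimodularity. Also, the Tsen/rational-point identification with Galois invariants is more than needed, since the paper only uses that $\Pic(S)$ is torsion-free.
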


\begin{proof}
Let $S=\mathcal{Y}_\eta$. By Proposition~\ref{prop:clop-p2}, $\rho(S)=1$. Since $S$ is geometrically rational, $\Pic(S)$ is torsion-free, and $-K_S$ is primitive since $K_S^2=1$. Thus $\Pic(S)=\mathbb{Z}[-K_S]$. Moreover, $H|_S=-K_S$.

Let $D$ be a prime Weil divisor on $\mathcal{Y}$. There is an integer $m$ such that $D|_S\sim mH|_S$. After subtracting $mH$ and the divisor of a rational function on $\mathcal{Y}$, we may assume that $D$ is vertical. By Lemma \ref{fibersintegral}, every vertical prime divisor is a fiber of $\pi$, hence has class $F$. Therefore $\Cl(\mathcal{Y})$ is generated by $H,F$. They are independent, since $H$ has positive degree on curves contained in a general fiber, while $F$ has positive degree on any curve dominating the base. Hence $\Cl(\mathcal{Y})=\ZZ H\oplus\ZZ F$. Both classes are $\QQ$-Cartier as restrictions of toric divisor classes, so $\mathcal{Y}$ is $\QQ$-factorial. Moreover, $\rho(\mathcal{Y})=2$ and $\rho(\mathcal{Y}/\P^{1})=1$.
\end{proof}

By adjunction, the sum of the torus invariant divisors of $\mathcal{P}$ has class $7F+7H$. Since $\mathcal{Y}\sim6F+6H$, we have $-K_{\mathcal{Y}}=F+H$. Set $A=F+H$ and $L=F+2H$. The rays of the Cox coordinates, ordered by slope, are $F,A,L,H$.
The original quotient corresponds to the chamber $\langle F,A\rangle$. The two-ray game starts by crossing the wall generated by $A$ and then
the wall generated by $L$.

\begin{Proposition}\label{flop}
Let $\mathcal{P}^{+}$ be the toric variety with Cox ring
$\C[r,s,p,q,z,v]$, with the same grading as $\mathcal{P}$,
and irrelevant ideal
$(r,s,q,v)\cap(p,z).$
Thus $\mathcal{P}^{+}$ corresponds to the chamber
$\langle A,L\rangle$ of the secondary fan. Let
$\mathcal{Y}^{+}\subset\mathcal{P}^{+}$ be the hypersurface
\begin{equation}\label{globalYplus}
\begin{aligned}
v^2={}&4r(r^2-s^2)z^3
+4(r^2-s^2)
\big(-3rq^2p^2+(2r+s)^2qp^3\big)z-4rq^5p+(2r+s)^2q^4p^2
\\
&-18r(r+2s)(2r+s)q^3p^3+\big(4(r+2s)(2r+s)^3
-27r^2(r+2s)^2\big)q^2p^4-4r(r^2-s^2)^2qp^5.
\end{aligned}
\end{equation}
Then the wall crossing at $A$ induces a small crepant birational map
$\chi:\mathcal{Y}\dashrightarrow\mathcal{Y}^{+}.$
Its exceptional curves are
$
C_-=\{p=z=v=0\}\subset\mathcal{Y}
$
and
$
C_+=\{r=s=v=0\}\subset\mathcal{Y}^{+},
$
and both are isomorphic to $\P^{1}$.
\end{Proposition}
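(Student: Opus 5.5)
The plan is to treat $\chi$ as a variation of GIT quotient of a single affine hypersurface. Let $\widehat{\mathcal Y}\subset\mathbb A^6_{r,s,p,q,z,v}$ be the affine hypersurface cut out by the common equation \eqref{globalY}$=$\eqref{globalYplus}. It is invariant under the torus $(\C^*)^2$ acting through the grading by $(F,H)$. Then $\mathcal Y$ and $\mathcal Y^+$ are the quotients of $\widehat{\mathcal Y}$ by this torus, computed with the two irrelevant ideals $(r,s)\cap(p,q,z,v)$ and $(r,s,q,v)\cap(p,z)$. On the open subset of $\widehat{\mathcal Y}$ lying outside both unstable loci, the two quotients coincide. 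This gives the birational map $\chi$ as the identity on a common open subset. The key steps are therefore: compute the loci of $\mathcal Y$ and of $\mathcal Y^+$ that fall outside this common open set, check that they are curves, and check crepancy.

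\emph{Exceptional loci.} A point of $\mathcal Y$ is unstable for $\mathcal P^+$ exactly when $p=z=0$. Every monomial on the right of \eqref{globalY} is divisible by $p$ or by $z$, so $p=z=0$ forces $v^2=0$ on $\mathcal Y$. Stability on $\mathcal P$ then requires $q\neq0$ and $(r,s)\neq(0,0)$. Normalize $q=1$; this kills the character $F+H$, so on the residual $\C^*$ we have $H=-F$. The remaining coordinates $r,s$ both have weight $F$, which gives $C_-=\{p=z=v=0\}\simeq\P^1_{[r:s]}$.

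Symmetrically, a point of $\mathcal Y^+$ not in $\mathcal P$ has $r=s=0$. Every coefficient in \eqref{globalYplus} is a form of positive degree in $r,s$, so again $v=0$. Stability on $\mathcal P^+$ forces $q\neq0$ and $(p,z)\neq(0,0)$. After normalizing $q=1$ we again have $H=-F$, so $p$ and $z$ both acquire weight $H$. This gives $C_+=\{r=s=v=0\}\simeq\P^1_{[p:z]}$.

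Both loci are curves in threefolds, so $\chi$ is an isomorphism in codimension one. It is therefore small, and in particular it identifies $\Cl(\mathcal Y)$ with $\Cl(\mathcal Y^+)$. I also plan to check that $\mathcal Y^+$ is normal. This follows by the argument of Lemma~\ref{fibersintegral}: the hypersurface is Cohen--Macaulay, and it agrees with $\mathcal Y$ off the curve $C_+$, so it is regular in codimension one.

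\emph{Crepancy and the contraction.} By the adjunction computation preceding the proposition, $-K_{\mathcal Y}=A$. The same computation in $\mathcal P^+$, which has the same Cox ring and the same grading, gives $-K_{\mathcal Y^+}=A$. The wall $A$ corresponds to the common model
\[
\mathcal Y_A=\operatorname{Proj}\bigoplus_{m\geq 0}H^0(\widehat{\mathcal Y},\OO)_{mA}.
\]
Both $\mathcal Y$ and $\mathcal Y^+$ map to $\mathcal Y_A$ by the natural morphisms of VGIT. The degree-$A$ monomials are $q$, $rp$ and $sp$. On $C_-$ these restrict to $q$ (constant) and to $rp=sp=0$. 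On $C_+$ they restrict to $q$ and to $0$. Hence $A$ is trivial on both curves, and both curves are contracted to the same point $\{q\neq0,\ \text{all else }0\}$ of $\mathcal Y_A$.

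Since the anticanonical class $A$ on each side is the pull-back of the same $\QQ$-Cartier class from $\mathcal Y_A$, the two canonical divisors agree under $\chi$. Thus $\chi$ is crepant: a flop over $\mathcal Y_A$ of the $K$-trivial curve $C_-$, with flopped curve $C_+$.

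The step I expect to be the main obstacle is making the crepancy precise. One has to verify that $A$ is genuinely $\QQ$-Cartier on $\mathcal Y_A$, rather than only on the toric ambient. One also has to verify that $\mathcal Y_A$ is the ample model of $A$ for both hypersurfaces, i.e.\ that restriction from $\mathcal P$ does not destroy the relevant section ring. I would handle this by using the $\QQ$-factoriality from Proposition~\ref{ClY} and the smallness just proved, which transports $\QQ$-factoriality to $\mathcal Y^+$. Then $K_{\mathcal Y^+}$ and $\chi_*K_{\mathcal Y}$ are two Weil divisors, both equal to $-A$ and agreeing off a curve, hence equal; this is the crepancy statement.
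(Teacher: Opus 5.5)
Your route is the paper's: the same comparison of the two GIT quotients of the common Cox ring, the same lost and gained loci, and the same crepancy mechanism. That mechanism is $-K=A$ on both sides, with $A$ the class of the common wall contraction and hence trivial on $C_\pm$. Your computation of $C_\pm$ is correct and more explicit than the paper's: every term of \eqref{globalY} is divisible by $p$ or $z$, every coefficient of \eqref{globalYplus} has positive degree in $r,s$, and after setting $q=1$ the residual weights make $[r:s]$, resp.\ $[p:z]$, homogeneous coordinates on $\P^1$.

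The problem is your last paragraph, which is exactly the step you flagged. The agreement of $K_{\mathcal Y^+}$ and $\chi_*K_{\mathcal Y}$ off a curve, hence as Weil divisor classes, holds for \emph{every} small birational map of normal varieties, flips included, because canonical divisors are determined in codimension one. So it is not the crepancy statement, and $\QQ$-factoriality adds nothing to it. Crepancy means $f^*K_{\mathcal Y}=g^*K_{\mathcal Y^+}$ on a common resolution $f\colon W\to\mathcal Y$, $g\colon W\to\mathcal Y^+$, and the triviality of $A$ on $C_\pm$ has to enter precisely there.

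Separately, smallness alone does not transport $\QQ$-factoriality: a small resolution of a non-factorial node is a counterexample. On $\mathcal Y^+$ it holds for a different reason, namely $\Cl(\mathcal Y^+)=\ZZ H\oplus\ZZ F$ with $H,F$ restricted from $\QQ$-Cartier classes on the simplicial $\mathcal P^+$, as in Proposition~\ref{MFS}.

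The obstacle you worried about disappears if you use the ambient wall model $\mathcal P_A=\operatorname{Proj}\bigoplus_{m\geq0}\C[r,s,p,q,z,v]_{mA}$ instead of $\mathcal Y_A$. Because $A$ lies in the closure of both chambers, for $m$ divisible enough $mA$ is, on both $\mathcal P$ and $\mathcal P^+$, the pullback of a very ample Cartier divisor $D$. The two morphisms to $\mathcal P_A$ realizing this agree on the common stable locus. If $W$ resolves the graph of $\chi$, the two composites $W\to\mathcal P_A$ therefore agree on a dense open set, hence everywhere. Consequently $f^*(-mK_{\mathcal Y})$ and $g^*(-mK_{\mathcal Y^+})$ are both the pullback of $D$. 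This is crepancy, and it needs neither $A$ to be $\QQ$-Cartier on $\mathcal Y_A$ nor $\mathcal Y_A$ to be an ample model. It is the precise form of the paper's one-line argument that $A$ defines the common wall contraction.
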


\begin{proof}
The two GIT quotients $\mathcal{P}$ and $\mathcal{P}^{+}$ have the
same Cox ring and grading. Their irrelevant ideals are
$(r,s)\cap(p,q,z,v) \qquad\text{and}\qquad (r,s,q,v)\cap(p,z),$
respectively. Hence the induced birational map is the identity on
Cox coordinates on the common stable locus.

The equation defining $\mathcal{Y}$ is homogeneous of class
$6F+6H=6A$, so the proper transform $\mathcal{Y}^{+}$ is defined by
the same Cox equation, namely \eqref{globalYplus}.

The locus lost when crossing the wall is $p=z=0$. On
$\mathcal{Y}$, equation \eqref{globalYplus} gives $v=0$, while
stability gives $q\neq0$ and $(r,s)\neq(0,0)$. Hence the exceptional
locus on $\mathcal{Y}$ is
$C_-=\{p=z=v=0,\ q\neq0,\ (r,s)\neq(0,0)\}\cong\P^{1}.$
On the other side the new exceptional locus is
$C_+=\{r=s=v=0,\ q\neq0,\ (p,z)\neq(0,0)\}\cong\P^{1}.$
Thus the wall crossing is small.

Moreover, the sum of the torus invariant divisors has class $7A$
and $\mathcal{Y}\sim6A$, so adjunction gives
$-K_{\mathcal{Y}}=A$. The same computation gives
$-K_{\mathcal{Y}^{+}}=A$. Since $A$ defines the common wall
contraction, both canonical divisors are numerically trivial on the
contracted curves. Hence $\chi$ is crepant.
\end{proof}

\begin{Proposition}\label{divcontract}
The toric variety $\mathcal{P}^{+}$ is the weighted blow-up
$\Phi:\mathcal{P}^{+}\longrightarrow \P(1_{x_0},1_{x_1},1_{x_2},1_{x_3},3_w)$
of the point
$P=[0:0:0:1:0]$
with weights $(2,2,1,3)$ in the local coordinates $(x_0,x_1,x_2,w)$.
The morphism is given by
\begin{equation}\label{ambientcontraction}
[x_0:x_1:x_2:x_3:w]
=
[rp^2:sp^2:qp:z:vp^3].
\end{equation}

Its restriction to $\mathcal{Y}^{+}$ is a divisorial contraction
$\varphi:\mathcal{Y}^{+}\longrightarrow X,$
where
\begin{equation}\label{sexticEq}
X=\{w^2=g_6(x_0,x_1,x_2,x_3)\}
\subset\P(1_{x_0},1_{x_1},1_{x_2},1_{x_3},3_w)
\end{equation}
and
$$
\begin{aligned}
g_6={}&4x_0(x_0^2-x_1^2)x_3^3
+4(x_0^2-x_1^2)\big((2x_0+x_1)^2x_2-3x_0x_2^2\big)x_3-4x_0x_2^5+(2x_0+x_1)^2x_2^4
\\
&-18x_0(x_0+2x_1)(2x_0+x_1)x_2^3+\big(4(x_0+2x_1)(2x_0+x_1)^3
-27x_0^2(x_0+2x_1)^2\big)x_2^2-4x_0(x_0^2-x_1^2)^2x_2.
\end{aligned}
$$
The exceptional divisor is
$E=\{p=0\}\cap\mathcal{Y}^{+} = \{p=0,\ v^2=4r(r^2-s^2)z^3\},$
and $\varphi(E)=P$.

The point $P\in X$ is of type $cD_4$, and
$\varphi:\mathcal{Y}^{+}\rightarrow X$ is the divisorial extraction
given locally by the weighted blow-up of $(x_0,x_1,x_2,w)$ with weights
$(2,2,1,3)$. Its discrepancy is one.
\end{Proposition}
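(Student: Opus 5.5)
The plan is to verify the claims in order: the toric description of $\Phi$, the identification of the image of $\mathcal Y^+$, the structure of $E$, and finally the local analysis at $P$.

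\textbf{The toric morphism $\Phi$.} The chamber $\langle A,L\rangle$ has the nef cone generated by $A=F+H$ and $L=F+2H$. Beyond the wall at $L$, the next chamber $\langle L,H\rangle$ is cut off by the ray of $z$ ($L=\deg z$). So the contraction of $\mathcal P^+$ is defined by the linear system $|3L|$, or more conveniently by the sections of class $L$, namely
\[
rp^2,\quad sp^2,\quad qp,\quad z,
\]
together with $vp^3$ in class $3L$. Each of these monomials has the stated class: for instance $\deg(rp^2)=F+2H$ and $\deg(vp^3)=3F+6H=3L$. This gives \eqref{ambientcontraction}.

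To see that $\Phi$ is a weighted blow-up, I would work on the chart $z\neq0$ and use the $\mathbb C^*$ with weight vector determined by $p$. Setting $z=1$ leaves a one-dimensional torus acting on $(r,s,q,v,p)$. Solving for the $\deg$-relations, $p$ acquires weight $-1$ relative to $(r,s,q,v)$, which have weights $(2,2,1,3)$. This is exactly the toric description of the $(2,2,1,3)$ weighted blow-up of $\mathbb A^4_{x_0,x_1,x_2,w}$ at the origin, with exceptional divisor $\{p=0\}$. Away from $p=0$, setting $p=1$ shows $\Phi$ is an isomorphism onto the complement of $P$.

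\textbf{The image of $\mathcal Y^+$.} Substituting $x_0=rp^2$, $x_1=sp^2$, $x_2=qp$, $x_3=z$, $w=vp^3$ into $w^2-g_6$ gives $p^6$ times the equation \eqref{globalYplus}. I would check this term by term: each monomial of \eqref{globalYplus} of class $6A$ acquires exactly $p^6$ once its $(r,s,q,z,v)$-part is rewritten via the weights. Hence $\mathcal Y^+$ is the strict transform of $X$, and $\varphi=\Phi|_{\mathcal Y^+}$ is birational onto $X$.

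\textbf{The exceptional divisor $E$.} Setting $p=0$ in \eqref{globalYplus} kills every term except $v^2=4r(r^2-s^2)z^3$. This equation is irreducible, so $E$ is a prime divisor contracted to $P$, and $\varphi$ is divisorial. The relative Picard number is one because $\rho(\mathcal Y^+)=2$ (by Proposition~\ref{ClY} and smallness of $\chi$), while $X$ has $\rho=1$. The equation also shows $w$ has multiplicity three with respect to these weights.

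\textbf{The singularity at $P$ and the discrepancy.} At $P$ (where $x_3=1$), the local equation is
\[
w^2=4x_0(x_0^2-x_1^2)+(\text{terms of higher weighted order}).
\]
Its quadratic part is $w^2$ and its cubic part $4x_0(x_0^2-x_1^2)$ is a product of three distinct linear forms, so a general hyperplane section through $P$ is a $D_4$ singularity, and $P$ is of type $cD_4$. For the discrepancy, set $\operatorname{wt}(x_0,x_1,x_2,w)=(2,2,1,3)$. Then the weighted blow-up of $\mathbb A^4$ has discrepancy $2+2+1+3-1=7$, and $\operatorname{wt}(f)=6$, since $w^2$ and $x_0x_1^2$ have weight $6$ and $x_2^6$ does as well. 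This gives
\[
a(E,X)=7-6=1.
\]

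\textbf{Main obstacle.} The delicate point is showing that every monomial of $g_6$ has weight at least $6$ (e.g.\ $x_0x_2^5$ has weight $7$ and $x_0^3$ has weight $6$). One also needs $E$ to be reduced, irreducible and normal enough that the weighted-blow-up discrepancy formula applies, i.e.\ that the weight-$6$ part defines $E$ correctly; the explicit equation $v^2=4r(r^2-s^2)z^3$ settles this. I would also confirm that $\mathcal Y^+$ has only terminal singularities along $E$ by checking the quotient points of $\mathcal P^+$ on $E$.
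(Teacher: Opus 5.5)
Your proposal is correct and follows essentially the same route as the paper. The section ring of $L=F+2H$ gives $\Phi$. The chart $z\neq0$ is the $(2,2,1,3)$ weighted blow-up with exceptional divisor $\{p=0\}$. The substitution identity $g_6(rp^2,sp^2,qp,z)=p^6F$, with $F$ the right-hand side of \eqref{globalYplus}, identifies $\mathcal Y^+$ as the strict transform. Finally, the initial form $w^2-4x_0(x_0^2-x_1^2)$ yields both the $cD_4$ type and $a(E,X)=8-1-6=1$.

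There are two harmless slips. First, $g_6$ has no $x_2^6$ monomial; the weight-six part is exactly $w^2-4x_0(x_0^2-x_1^2)$. Second, relative Picard number one is not part of the statement, and it should not rest on $\rho(X)=1$, which the paper proves only later using this proposition. In any case $\rho(\mathcal Y^+)=2$ together with projectivity of $X$ already gives it.
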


\begin{proof}
In the basis $(L,H)$ the Cox degrees of
$r,s,p,q,z,v$ are
$
(1,-2),\, (1,-2),\, (0,1),\, (1,-1),\, (1,0),\, (3,-3).
$
Set
$
x_0=rp^2,\, x_1=sp^2,\, x_2=qp,\, x_3=z,\, w=vp^3.
$
These monomials have degrees
$
(1,0),\, (1,0),\, (1,0),\, (1,0),\, (3,0).
$
They generate the section ring of $L$, and therefore define the
toric morphism
$\Phi:\mathcal{P}^{+}\longrightarrow \P(1,1,1,1,3).$
Outside $p=0$ we can use the second torus action to set $p=1$,
and the above coordinates recover $r,s,q,z,v$. Thus $\Phi$ is an
isomorphism away from $p=0$.

On the divisor $p=0$, stability implies $z\neq0$, and
\eqref{ambientcontraction} gives
$
x_0=x_1=x_2=w=0.
$
Hence the whole divisor $p=0$ is contracted to
$P=[0:0:0:1:0]$.

On the affine chart $x_3=1$ the coordinates of the target satisfy
$x_0=rp^2,\qquad x_1=sp^2,\qquad x_2=qp,\qquad w=vp^3.$
Therefore the valuation defined by $p=0$ has weights
$
(2,2,1,3).
$
Thus $\Phi$ is the weighted blow-up of $P$ with these weights.

A direct substitution gives
$
g_6(rp^2,sp^2,qp,z)
=
p^6\,F(r,s,p,q,z),
$
where $F$ is the right hand side of \eqref{globalYplus}. Since
$w^2=v^2p^6$, the proper transform of $X$ is precisely
\eqref{globalYplus}. Hence
$\mathcal{Y}^{+}$ is the strict transform of $X$ and
$\varphi=\Phi|_{\mathcal{Y}^{+}}$.

The exceptional divisor is obtained by setting $p=0$ in
\eqref{globalYplus}, which gives
$
E=\{p=0,\ v^2=4r(r^2-s^2)z^3\}.
$
Its right hand side is not a square, so $E$ is prime. Hence
$\varphi$ is divisorial.

On the chart $x_3=1$, the equation of $X$ at $P$ is
$
w^2=4x_0(x_0^2-x_1^2)+g_{\geq5}(x_0,x_1,x_2),
$
where every monomial of $g_{\geq5}$ has ordinary degree at least
five. A general hyperplane section through $P$ has equation
$
w^2-4x_0(x_0-x_1)(x_0+x_1)+H_{\geq4}=0.
$
After splitting the nondegenerate $w$-direction, the residual plane
curve has an ordinary triple point. Hence $P$ is of type $cD_4$.

Finally, with respect to the weights $(2,2,1,3)$ the initial form of
the local equation is
$
w^2-4x_0(x_0^2-x_1^2),
$
of weighted degree six. Therefore, the discrepancy is
$
a(E,X)=2+2+1+3-1-6=1.
$
\end{proof}

We determine the singularities of the sextic double solid in \eqref{sexticEq}. 
\begin{Proposition}\label{singX}
The singular locus of $X$ is finite. Its singularity basket is
$cD_4+2cA_3+13cA_1,$
where twelve of the $cA_1$ points are ordinary double points. In particular, $X$ is terminal.
\end{Proposition}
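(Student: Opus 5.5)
The plan is to compute the singular locus of $X=\{w^2=g_6\}$ directly and then classify each singular point by a local normal form. Since $X$ is a double cover of $\P^3$ branched along the sextic surface $B=\{g_6=0\}$, a point of $X$ is singular exactly when it lies over a singular point of $B$ with $w=0$. The point $[0:0:0:0:1]$ does not lie on $X$, because $w^2=0$ there. So $\Sing(X)$ is identified with $\Sing(B)$, and the local type of $X$ at a point is that of the suspension $w^2=g_6$ of the surface singularity of $B$. The first step is to show that $\Sing(B)$ is zero-dimensional. I would compute the ideal $(g_6,\partial g_6/\partial x_i)$ exactly, by a Gröbner basis over $\QQ$ as in the computer checks of Section~\ref{appendix:magma}, and check that its Hilbert polynomial is constant. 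I would then split it into primary components, recording the length and the field of definition of each component.

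Second, I would classify the points. The point $P=[0:0:0:1:0]$ is already known to be $cD_4$ by Proposition~\ref{divcontract}. For each remaining point, I would move it to the origin of an affine chart. Where the Hessian of $g_6$ has corank zero, the surface $B$ has an $A_1$ point, and the threefold point $w^2=g_6$ is an ordinary double point. Twelve points should be of this kind; I expect several Galois orbits of them, detected as reduced primary components whose Hessian determinant is a unit modulo the component. For the other points the Hessian of $g_6$ has corank one, and I would use the splitting lemma to write the local equation as $w^2+x^2+y^2+\phi(z)$. A point is $cA_n$ when, after completing the squares, $\phi$ has order $n+1$. This gives the two $cA_3$ points, where $\phi$ has order $4$, and one further $cA_1$ point. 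That last point has rank three quadratic part but is not ordinary, or possibly is ordinary in a degenerate way that I must pin down. Such a point is still $cA_1$ because the quadratic part of $w^2+g_6$ has rank at least two. The Milnor or Tjurina numbers of the primary components serve as a cross-check: the total length should match $\mu(cD_4)+2\mu(cA_3)+12+\mu$ of the remaining point.

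Terminality then follows. A point $w^2=g(x,y,z)$ with isolated singularity is a compound Du Val point precisely when a general hyperplane section through it is Du Val. Isolated cDV points are terminal by Reid's criterion. Since the singular locus is finite and every point is of type $cA$ or $cD$, $X$ is terminal.

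I expect the main obstacle to be the exact identification of the non-reduced primary components. I must distinguish the $cA_3$ points from the one exceptional $cA_1$ point. The latter may be a point where the Tjurina number exceeds one even though the quadratic part has rank at least two, for example a $cA_1$ point of the form $xy+z^2+w^k$. To do this I would localize at each component over its residue field, compute the local Milnor number $\mu$ and the rank of the Hessian, and, where necessary, carry the splitting lemma out to order four. The components may be defined only over a number field, so these computations should be done exactly over that field.
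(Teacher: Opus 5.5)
You have a genuine gap in the classification step: your criterion for $cA_n$ is the wrong one. You write a non-nodal point as $w^2+x^2+y^2+\phi(z)$ and call it $cA_n$ when $\operatorname{ord}\phi=n+1$. That criterion identifies the threefold $A_n$ singularity, equivalently an $A_n$ point of the branch surface $B$. It does not identify the compound type. The type $cA_n$ is defined by a general hyperplane section. A general hyperplane section of $w^2+x^2+y^2+\phi(z)=0$ has nondegenerate quadratic part in three variables, so it is $A_1$, and the point is $cA_1$ whatever the order of $\phi$.

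The correct test is the rank of the quadratic part of $w^2-g_6$; this is what the paper uses via \cite[Corollary~2.6]{Pae24}. Rank four gives an ordinary double point and rank three a non-ordinary $cA_1$ point. Rank exactly two gives $cA_n$, where $n+1$ is the order of the residual function in the \emph{two} remaining variables after splitting off the nondegenerate directions. Your claim that the last point ``is still $cA_1$ because the quadratic part has rank at least two'' has the same flaw: rank two only gives $cA_n$ for some $n$. It also contradicts your own normal form for the $cA_3$ points, whose quadratic part has rank three as well.

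This is not harmless, because it reverses the roles of the points in this example. Besides $P$ and the twelve nodes, the singular points are $P_0=[0:1:0:0:0]$ and $P_\pm=[1:\pm1:0:0:0]$.

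At $P_0$ the quadratic part is $w^2+4x_0x_2-8x_2^2+4x_2x_3$, of rank three, so the affine Hessian of $g_6$ has corank one. Its kernel is the line $x_2=0$, $x_3=-x_0$, on which $g_6(x_0,1,0,-x_0)=4x_0^4-4x_0^6$, and the splitting corrections only start in degree five. So your $\phi$ has order four there, and your rule would declare $P_0$ a $cA_3$ point. In fact it is the thirteenth $cA_1$ point.

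At $P_+$, on the chart $x_0=1$, $x_1=1+a$, the quadratic part of $w^2-g_6$ is $w^2-81x_2^2$, of rank two. So the Hessian of $g_6$ has corank \emph{two}, and your one-variable normal form does not apply. One must split off $w$ and $x_2$ and check that the residual in $(a,x_3)$ has a nonzero quartic leading form; that is what makes $P_\pm$ of type $cA_3$. Hence the pattern you predict (two corank-one $cA_3$ points plus one extra $cA_1$) does not occur, and carrying out your recipe would give a wrong basket.

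The rest of your plan agrees with the paper. This covers reducing to $\Sing(B)$ on $w=0$, detecting the twelve nodes by a Hessian determinant that is a unit in the critical algebra, quoting Proposition~\ref{divcontract} for $P$, and deducing terminality from isolated cDV points. Separately, your Milnor-number cross-check is ill-posed: $cD_4$ and $cA_3$ do not determine $\mu$, and the local length of $(g_6,\nabla g_6)$ is a Tjurina number of $B$, not a Milnor number.
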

\begin{proof}
Set
$F=w^2-g_6(x_0,x_1,x_2,x_3)$, so that
$X=\{F=0\}\subset\P(1,1,1,1,3)$.
The only singular point of the ambient weighted projective space is
$[0:0:0:0:1]$, and this point does not belong to $X$. Hence the
Jacobian criterion applies at every point of $X$.

Let $Q\in\Sing(X)$. Since $F_w=2w$, we have $w(Q)=0$. The equation
$F(Q)=0$ then gives $g_6(Q)=0$. Moreover,
$F_{x_0}=-(g_6)_{x_0},\, F_{x_1}=-(g_6)_{x_1},\, F_{x_2}=-(g_6)_{x_2},\, F_{x_3}=-(g_6)_{x_3}.$
Thus the image of $Q$ under the double cover
$X\rightarrow\P^3_{[x_0:x_1:x_2:x_3]}$ is a singular point of the
branch surface
$B=\{g_6=0\}\subset\P^3$. Conversely, every singular point
of $B$ lifts to a singular point of $X$ with $w=0$. Therefore
$\Sing(X)= \{[x_0:x_1:x_2:x_3:0]\mid [x_0:x_1:x_2:x_3]\in\Sing(B)\}.$

We first determine the singular points lying on $x_2=0$. A direct
differentiation gives
$$
\begin{aligned}
(g_6)_{x_0}|_{x_2=0}&=4(3x_0^2-x_1^2)x_3^3,\qquad
(g_6)_{x_1}|_{x_2=0}=-8x_0x_1x_3^3,\qquad (g_6)_{x_3}|_{x_2=0}&=12x_0(x_0^2-x_1^2)x_3^2,\\
(g_6)_{x_2}|_{x_2=0}&=-4(x_0^2-x_1^2)
\big(x_0^3-x_0x_1^2-4x_0^2x_3-4x_0x_1x_3-x_1^2x_3\big).
\end{aligned}
$$
Since $g_6$ is homogeneous of degree six, Euler's identity gives
$6g_6=x_0(g_6)_{x_0}+x_1(g_6)_{x_1}+x_2(g_6)_{x_2}+x_3(g_6)_{x_3}.$
Hence $g_6=0$ at every common zero of its first derivatives, and it is enough to solve the equations above.

Assume first that $x_3\neq0$. From $(g_6)_{x_1}=(g_6)_{x_0}=0$ we obtain
$x_0x_1=0$ and $3x_0^2=x_1^2$. Hence $x_0=x_1=0$, and we get
$P=[0:0:0:1:0].$
This is the $cD_4$ point described in Proposition \ref{divcontract}.

Now assume $x_3=0$. The last equation becomes
$-4x_0(x_0^2-x_1^2)^2=0$. Hence
$x_0=0$ or $x_0^2=x_1^2$, and the remaining singular points on $x_2=0$ are
$P_0=[0:1:0:0:0],\, P_+=[1:1:0:0:0],\, P_-=[1:-1:0:0:0].$

We determine their analytic types. At $P_0$, work on the chart $x_1=1$.
The quadratic part of the local equation $w^2-g_6=0$ is
$
w^2+4x_0x_2-8x_2^2+4x_2x_3,
$
which has rank three. By the splitting criterion
\cite[Corollary 2.6]{Pae24}, the germ is of type $cA_1$.

At $P_+$, work on the chart $x_0=1$ and set $x_1=1+a$. The quadratic part
of $w^2-g_6$ has rank two and is nondegenerate in the variables
$w,x_2$. By the analytic splitting lemma these variables can be split
off. Solving the $x_2$-critical equation to order two gives
$
x_2=\frac{4a(2a+9x_3)}{81}+O(3).
$
After substitution, the first nonzero homogeneous part of the
residual function in $a,x_3$ is
$
\frac{8}{81}a
\big(8a^3+72a^2x_3+162ax_3^2+81x_3^3\big),
$
which has degree four. Hence $P_+$ is of type $cA_3$ by
\cite[Corollary 2.6]{Pae24}.

The computation at $P_-$ is analogous. On the chart $x_0=1$, set
$x_1=-1+a$. After splitting the two nondegenerate directions, the first
nonzero homogeneous part of the residual function is
$
-\frac{8}{31}a
\big(8a^3-8a^2x_3+2ax_3^2+31x_3^3\big).
$
It has degree four, so $P_-$ is also of type $cA_3$.

It remains to determine the singular points with $x_2\neq0$. Work on
the affine chart $x_2=1$ and set
$
g=g_6(x_0,x_1,1,x_3),\,
I=(g,g_{x_0},g_{x_1},g_{x_3})\subset\QQ[x_0,x_1,x_3].
$
A Gr\"obner basis computation for degree reverse lexicographic
order with $x_0>x_1>x_3$ gives
$
\operatorname{in}(I)=
(x_0^3,x_0^2x_1,x_0x_1^2,x_1^3,x_0^2x_3,x_0x_3^2,x_1x_3^2,x_3^3).
$
The standard monomials are
$1,x_3,x_3^2,x_1,x_1x_3,x_1^2,x_1^2x_3,x_0,x_0x_3,x_0x_1,x_0x_1x_3,x_0^2,$ and therefore
$
\dim_{\QQ}\QQ[x_0,x_1,x_3]/I=12.
$
In particular, the singular scheme of $B$ on the chart $x_2=1$ is
zero-dimensional of length twelve.

Set $h=\det(\operatorname{Hess}(g))$. With respect to the standard
monomial basis above, the determinant of multiplication by $h$ on
$\QQ[x_0,x_1,x_3]/I$ is
$
\frac{2^{56}3^{24}13^4 31^2}{29^{14}}\,
712331^2\,540913480586273^2\neq0.
$
Hence multiplication by $h$ is an isomorphism, so $h$ is a unit in
the critical algebra. In particular, the Hessian of $g$ is
nondegenerate at every point of the singular scheme.

It follows that each critical point is reduced. Indeed, locally at
such a point the Jacobian matrix of $(g_{x_0},g_{x_1},g_{x_3})$ is the Hessian
matrix of $g$, which is invertible. Thus the three partial derivatives
generate the maximal ideal in the completed local ring. Since the
total length is twelve, there are exactly twelve distinct singular
points of $B$ on $x_2=1$.

Let $Q$ be one of these points and choose affine coordinates
$u_1,u_2,u_3$ centered at its image in $\P^3$. Since
$g(Q)=dg(Q)=0$ and $\operatorname{Hess}(g)(Q)$ is nondegenerate, the
quadratic part of the local equation
$
w^2-g(u_1,u_2,u_3)=0
$
is a nondegenerate quadratic form in the four variables
$w,u_1,u_2,u_3$. Hence the corresponding point of $X$ is an ordinary
double point. Thus $X$ has twelve ordinary double points on the chart
$x_2=1$.

We have therefore found all singular points of $X$: the point $P$ of
type $cD_4$, the two points $P_+,P_-$ of type $cA_3$, the point $P_0$
of type $cA_1$, and twelve ordinary double points, which are also of
type $cA_1$. Hence the singularity basket is
$
cD_4+2cA_3+13cA_1.
$
In particular, $\Sing(X)$ is finite. All these singularities are
isolated cDV singularities, hence they are terminal by the
Gorenstein terminal criterion
\cite[Section 2.1]{Pae24}, \cite{Rei87}.
\end{proof}

\begin{Lemma}\label{terminalYplus}
The variety $\mathcal{Y}^{+}$ is terminal along the exceptional divisor of $\varphi$. More precisely, over $P$ it has one $cA_1$ point and three quotient singularities of type $\frac12(1,1,1)$.
\end{Lemma}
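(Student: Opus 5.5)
We study $\mathcal{Y}^{+}$ along $E=\{p=0\}\cap\mathcal{Y}^{+}=\{p=0,\ v^2=4r(r^2-s^2)z^3\}$ directly in the Cox coordinates of $\mathcal{P}^{+}$. On $p=0$ the irrelevant ideal $(r,s,q,v)\cap(p,z)$ forces $z\neq0$. So $E$ lies in the open set $\{z\neq0\}$, and we may work chart by chart in $\{z\neq0\}$ and in $\{z\neq0,\ r\neq0\}$, $\{z\neq0,\ s\neq0\}$, and so on. The first step is to identify the non-free points of the torus action on $\{z\ne0\}$, that is, the points with nontrivial stabilizer. In the basis $(L,H)$ the degrees of $r,s,p,q,z,v$ are $(1,-2),(1,-2),(0,1),(1,-1),(1,0),(3,-3)$. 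Setting $z=1$ leaves a residual one-dimensional torus acting with weights $-2,-2,1,-1,-3$ on $r,s,p,q,v$. The point $r=s=q=v=0$ is unstable, so generically the stabilizers are trivial. Nontrivial stabilizers occur only on loci where every coordinate with odd residual weight vanishes, namely $p=q=v=0$. These are points with isotropy $\mu_2$ acting on the remaining directions.

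Next we intersect this locus with $\mathcal{Y}^{+}$. On $p=q=v=0$, equation \eqref{globalYplus} reduces to $0=4r(r^2-s^2)z^3$, which gives the three points $[r:s]\in\{[0:1],[1:1],[1:-1]\}$. At each of them we take the $\mu_2$-quotient chart, for instance $s=1$, $z=1$ with local coordinates $r,p,q,v$. There we check that $\partial F/\partial r\neq0$: the coefficient of $r$ in $4r(r^2-s^2)$ is nonzero at $r=0$ and likewise at $r=\pm1$. So the hypersurface is smooth upstairs and $r$ can be eliminated. What remains is $\C^3_{p,q,v}/\mu_2$ with weights $(1,1,1)$ mod $2$, which is a $\tfrac12(1,1,1)$ point. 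This gives exactly three quotient singularities, and each is terminal.

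Finally, we look for Gorenstein singularities at free points of $E$. On the free locus we set $z=1$ and apply the Jacobian criterion to \eqref{globalYplus} along $p=0$. There $F_v=2v$ forces $v=0$, and $\partial_r,\partial_s$ of $4r(r^2-s^2)$ must vanish, which gives $r=s=0$. Since $r=s=q=v=0$ is unstable, we need $q\neq0$, so the candidate is the single point $r=s=p=v=0$, $q=1$, $z=1$, which lies on $C_+$. It remains to check that $F_p=-4r q^5+\dots$ vanishes there, so that the point is really singular, and to compute its quadratic part in local coordinates $r,s,p,v$. We get $v^2+4rp+(\text{terms in }r,s,p)$, where the $p^2$ coefficient comes from $(2r+s)^2q^4p^2$ and vanishes at $r=s=0$. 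We expect the rank to be $\ge 3$, which by \cite[Corollary 2.6]{Pae24} makes the point of type $cA_1$; this rank computation is the main obstacle, and we will carry it out by expanding the terms linear in $r,s$ in \eqref{globalYplus}. Everything in this description is isolated cDV or a terminal cyclic quotient, so $\mathcal{Y}^{+}$ is terminal along $E$. As a consistency check, the discrepancy $1$ computed in Proposition~\ref{divcontract} agrees with this picture.
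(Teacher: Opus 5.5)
Your route is the paper's computation carried out in the Cox coordinates of $\mathcal{P}^{+}$ rather than in the weighted blow-up charts of $X$. On $\{z=1\}$, your slices $q=1$, $s=1$ (or $r=1$) and $v=1$ are exactly the paper's $x_2$-, $x_1$- (or $x_0$-) and $w$-charts. Your check $\partial_rF\neq0$ at $r\in\{0,\pm1\}$ and your quadratic part $v^2+4rp$ are the paper's $4-12r^2\neq0$ and $v^2+4rt$.

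There is one concrete hole: your stabilizer analysis is wrong. With residual weights $(-2,-2,1,-1,-3)$ on $(r,s,p,q,v)$, a stable point has nontrivial stabilizer exactly when the weights of its nonzero coordinates share a common factor. Besides the $\mu_2$-locus $\{p=q=v=0\}$, this also happens at the $\mu_3$-point $r=s=p=q=0$, $v\neq0$. That point is stable (since $v\neq0$) and lies in $\{p=0\}$; it is the origin of the paper's $w$-chart. As written, you have not excluded a $\tfrac13$-type quotient point on $E$, so your count of exactly three quotient singularities is not yet justified. The fix is one line: every term on the right of \eqref{globalYplus} vanishes at $r=s=p=q=0$, so the equation becomes $v^2=0$ and the point does not lie on $\mathcal{Y}^{+}$.

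The rank computation you defer as the ``main obstacle'' is immediate. At $q=z=1$, every term on the right of \eqref{globalYplus} other than $-4rq^5p$ has degree at least three in $(r,s,p)$; the lowest are $4r(r^2-s^2)$ and then $(2r+s)^2p^2$. So the quadratic part is exactly $v^2+4rp$, of rank three. Your own observations already force rank $\geq3$: there is no $p^2$ term, the $rp$-coefficient is nonzero, and $v$ occurs only in $v^2$.

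Finally, to pass from ``cDV'' to ``terminal'' you need the $cA_1$ point to be isolated in $\mathcal{Y}^{+}$, not merely on $E$. This holds because $\mathcal{Y}^{+}\setminus E\cong X\setminus\{P\}$ and $\Sing(X)$ is finite by Proposition~\ref{singX}.
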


\begin{proof}
We use the local equation of $X$ on $x_3=1$ and the weighted blow-up of Proposition \ref{divcontract}. On the $x_2$-chart set $x_0=t^2r$, $x_1=t^2s$, $x_2=t$, $w=t^3v$ and divide the equation by $t^6$. On the exceptional divisor $t=0$ the strict transform has equation
$v^2-4r^3+4rs^2=0$. Its derivatives with respect to $v,r,s,t$, restricted to $t=0$, are
$2v,\, -4(3r^2-s^2),\, 8rs,\, 4r.$
Hence the origin is the unique singular point of this chart on the exceptional divisor. Its quadratic part is $v^2+4rt$, of rank three, so it is of type $cA_1$.

On the $x_0$-chart of the index-one cover set $x_0=u^2$, $x_1=u^2s$, $x_2=ut$, $w=u^3v$. The residual $\mu_2$-action has weights $(1,0,1,1)$ on $(u,s,t,v)$, and the exceptional equation is $v^2+4s^2-4=0$. Its fixed locus on the hypersurface consists of $s=\pm1$, $u=t=v=0$. At these points the derivative with respect to $s$ is $8s\neq0$, so the index-one cover is smooth and $s$ can be eliminated analytically. The induced action on the tangent coordinates $(u,t,v)$ has weights $(1,1,1)$, and therefore both quotient points are of type $\frac12(1,1,1)$.

On the $x_1$-chart of the index-one cover set $x_0=u^2r$, $x_1=u^2$, $x_2=ut$, $w=u^3v$. The residual $\mu_2$-action again has weights $(1,0,1,1)$ on $(u,r,t,v)$, and the exceptional equation is $v^2-4r^3+4r=0$. Its fixed locus consists of $r=0,\pm1$, $u=t=v=0$. Since the derivative with respect to $r$ is $4-12r^2$, the index-one cover is smooth at all three points. The points $r=\pm1$ are the two quotient points already seen on the $x_0$-chart, while $r=0$ gives a third point of type $\frac12(1,1,1)$.

Finally, on the $w$-chart of the index-one cover set $x_0=u^2r$, $x_1=u^2s$, $x_2=ut$, $w=u^3$. The residual $\mu_3$-action has weights $(1,1,1,2)$ on $(u,r,s,t)$, and the exceptional equation is $1-4r^3+4rs^2=0$. Its only possible fixed point is the origin, which does not lie on the hypersurface. Hence there are no further singularities over $P$.
\end{proof}

\begin{Proposition}\label{factorialX}
The sextic double solid $X$ is a terminal factorial Fano threefold and
$\Cl(X)=\Pic(X)=\ZZ[\OO_X(1)]$. In particular, $\rho(X)=1$ and $X\rightarrow\operatorname{Spec}(\C)$ is a Mori fiber space.
\end{Proposition}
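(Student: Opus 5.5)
Terminality of $X$ is Proposition~\ref{singX}, and $-K_X=\OO_X(3+\dots)$ adjunction gives $-K_X=\OO_X(1)$ since the sum of weights is $1+1+1+1+3=7$ and $X$ has degree six; $\OO_X(1)$ is ample. So the content is the computation of $\Cl(X)$. The plan is to transport the class group of $\mathcal Y$ through the birational maps $\mathcal Y\dashrightarrow\mathcal Y^+\to X$ already constructed. By Proposition~\ref{ClY}, $\Cl(\mathcal Y)=\ZZ H\oplus\ZZ F$. The map $\chi$ of Proposition~\ref{flop} is small, hence an isomorphism in codimension one, so it induces $\Cl(\mathcal Y^+)\simeq\Cl(\mathcal Y)=\ZZ H\oplus\ZZ F$, where the classes are the restrictions of the same Cox degrees on $\mathcal P^+$.

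Next I use the divisorial contraction $\varphi:\mathcal Y^+\to X$ of Proposition~\ref{divcontract}, with prime exceptional divisor $E=\{p=0\}\cap\mathcal Y^+$. Since $\varphi$ is an isomorphism off $E$, there is an exact sequence
\[
\ZZ[E]\longrightarrow\Cl(\mathcal Y^+)\longrightarrow\Cl(X)\longrightarrow0 .
\]
The class of $E$ is $\deg(p)=H$. Hence $\Cl(X)\simeq(\ZZ H\oplus\ZZ F)/\ZZ H\simeq\ZZ$, generated by the image of $F$. It remains to identify this generator with $\OO_X(1)$: the pull-back $\varphi^*\OO_X(1)$ is the class of $x_3=z$, namely $L=F+2H$, so $\OO_X(1)=\varphi_*(F+2H)=\varphi_*F$, since $\varphi_*H=0$. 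Thus $\Cl(X)=\ZZ[\OO_X(1)]$, and the map is surjective onto a torsion-free group, which also shows the first arrow is injective (consistent with $E$ being exceptional).

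Finally, since $\OO_X(1)$ is Cartier (it is the restriction of $\OO_{\P(1,1,1,1,3)}(1)$, which is locally free away from the $w$-point, and $X$ avoids that point), every Weil divisor on $X$ is Cartier, i.e. $X$ is factorial and $\Cl(X)=\Pic(X)=\ZZ[\OO_X(1)]$, giving $\rho(X)=1$. Together with ampleness of $-K_X$ and terminality, $X$ is a terminal factorial Fano threefold of Picard rank one, so $X\to\Spec\C$ is a Mori fiber space.

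The step I expect to need most care is the first one: that Proposition~\ref{ClY} indeed gives the integral group $\ZZ H\oplus\ZZ F$ (not merely up to finite index), and that the small map $\chi$ preserves integral classes with the same Cox labels, so that $[E]=H$ holds integrally. I would check this by noting that on both sides the classes are defined by the common Cox grading and that $\chi$ is the identity on Cox coordinates on the common stable locus, whose complement in each of $\mathcal Y,\mathcal Y^+$ is the curve $C_\mp$, of codimension two.
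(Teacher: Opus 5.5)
Your proposal is correct and is essentially the paper's proof: you transport $\Cl(\mathcal Y)=\ZZ H\oplus\ZZ F$ across the small map $\chi$, quotient by the class $H=[E]$ of the exceptional divisor of $\varphi$, and identify the generator with the image of $L=F+2H$, namely $\OO_X(1)$, which is Cartier because $X$ misses the singular point of $\P(1,1,1,1,3)$. The only difference is that the paper first proves $\mathcal Y^+$ is normal (Cohen--Macaulay plus $R_1$, then Serre's criterion) before invoking invariance of $\Cl$ under small maps, whereas you use directly that $\chi$ is an isomorphism away from the curves $C_\mp$; this already gives $R_1$ and is enough for the class group computation.
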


\begin{proof}
We first note that $\mathcal Y^+$ is normal. It is an effective Cartier
divisor in the Cohen--Macaulay toric variety $\mathcal P^+$, hence it
satisfies $S_2$. The small birational map
$\mathcal Y\dashrightarrow\mathcal Y^+$ is an isomorphism outside subsets
of codimension at least two, and $\mathcal Y$ is normal by
Lemma~\ref{fibersintegral}. Therefore $\mathcal Y^+$ is regular in
codimension one and satisfies $R_1$. Serre's criterion gives normality.

A small birational map between normal varieties induces an isomorphism of
class groups, hence Proposition~\ref{ClY} gives
$\Cl(\mathcal{Y}^{+})=\ZZ H\oplus\ZZ F$. The morphism $\varphi$ contracts only the prime divisor $E=(p=0)$, whose class is $H$. The standard exact sequence for class groups of a birational contraction therefore yields
$
\Cl(X)\cong(\ZZ H\oplus\ZZ F)/\ZZ H\cong\ZZ.
$
The class $L=F+2H$ defining $\varphi$ descends to $\OO_X(1)$ and generates this quotient. The only singular point of $\P(1,1,1,1,3)$ is $[0:0:0:0:1]$, and it does not belong to $X$ since the left hand side of \eqref{sexticEq} is then nonzero. Thus $\OO_X(1)$ is Cartier, and therefore
$\Cl(X)=\Pic(X)=\ZZ[\OO_X(1)]$. Hence $X$ is factorial and $\rho(X)=1$. By adjunction, $-K_X=\OO_X(1)$, so $X$ is Fano. Terminality follows from Proposition~\ref{singX}, and the last assertion follows.
\end{proof}

\begin{Proposition}\label{MFS}
The morphism $\pi:\mathcal{Y}\rightarrow\P^{1}$ is a Mori fiber space of degree one, the map $\mathcal{Y}\dashrightarrow\mathcal{Y}^{+}$ is a flop, and
$
X\longleftarrow\mathcal{Y}^{+}\dashrightarrow\mathcal{Y}
\longrightarrow\P^{1}
$
is a Sarkisov link of type I centered at the $cD_4$ point $P\in X$.
\end{Proposition}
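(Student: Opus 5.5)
I will verify the three assertions in order: $\pi$ is a Mori fiber space, $\chi$ is a flop, and the diagram is a type I link. Everything reduces to results already proved above, plus a check of singularities of $\mathcal Y$ away from the flopping locus.

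\emph{Step 1: $\pi$ is a Mori fiber space.} By Proposition~\ref{ClY}, $\mathcal Y$ is $\QQ$-factorial with $\rho(\mathcal Y/\P^1)=1$. We have $-K_{\mathcal Y}=A=F+H$, whose restriction to a fiber is $H|_{\mathcal Y_b}$. This is ample on every fiber, being the restriction of $\OO(1)$ on $\P(1,1,2,3)$, so $-K_{\mathcal Y}$ is $\pi$-ample. The generic fiber is the degree-one del Pezzo surface $S$ of Proposition~\ref{prop:clop-p2}, so the degree is $K_S^2=1$.

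Terminality of $\mathcal Y$ is the main obstacle. I would argue as follows.
\begin{itemize}
\item Outside $C_-$, the map $\chi$ is an isomorphism onto $\mathcal Y^+\setminus C_+$.
\item $\mathcal Y^+$ is terminal along $E$ by Lemma~\ref{terminalYplus}.
\item Away from $E$, $\varphi$ is an isomorphism onto $X\setminus\{P\}$, which is terminal by Proposition~\ref{singX}.
\end{itemize}
Hence $\mathcal Y^+$ is terminal, and in particular terminal near $C_+$, which lies in $\{r=s=0\}\not\subset E$. So $\mathcal Y\setminus C_-$ is terminal.

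It remains to check $\mathcal Y$ along $C_-=\{p=z=v=0\}$. Here I would work in the chart $q=1$ with local coordinates $p,z,v$ and base coordinate $t$. The equation reads $v^2=-4tp+(\text{higher order})$, so for $t\neq0$ the hypersurface is smooth at points of $C_-$, since $\partial/\partial p$ is nonzero there. At $t=0$ (that is, $r=0$) and at $r=\pm s$, I would inspect the quadratic part. For example, at $r=0$ the coefficient $-4eq^5p$ vanishes to first order in $r$ while $c^2q^4p^2$ survives. I expect these points to be either smooth or $cA$ points, hence terminal. If the hand computation is messy, I would instead use crepancy: $\chi$ is a crepant small map, so discrepancies of divisors over $C_-$ computed on $\mathcal Y$ and on $\mathcal Y^+$ agree. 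Every exceptional divisor with center on $C_-$ has center on $\mathcal Y^+$ contained in $C_+$, and $\mathcal Y^+$ is terminal there. This crepant-transfer argument is cleaner and is what I would try first.

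\emph{Step 2: $\chi$ is a flop.} Proposition~\ref{flop} gives small and crepant. Both sides are terminal by Step 1, and $\QQ$-factorial: $\mathcal Y$ by Proposition~\ref{ClY}, and $\mathcal Y^+$ because its class group is generated by restrictions of $\QQ$-Cartier toric classes. The relative Picard number over the common wall contraction is one on each side, since it is a wall crossing of a rank-two Cox ring. Since $K$ is trivial on $C_\pm$, $\chi$ is a flop.

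\emph{Step 3: the Sarkisov link.} By Proposition~\ref{divcontract}, $\varphi$ is a divisorial extraction with discrepancy one. It is extremal, since $\rho(\mathcal Y^+/X)=1$ as $\Cl(\mathcal Y^+)$ has rank two and $\Cl(X)$ has rank one. The variety $X$ is a terminal $\QQ$-factorial Mori fiber space over a point by Proposition~\ref{factorialX}. Composing the extraction, the flop, and the fibration $\pi$ gives exactly the shape of a type I link: divisorial extraction followed by isomorphisms in codimension one, ending in a Mori fiber space over a base of dimension one more than $\Spec\C$. The link is centered at $P=\varphi(E)$, which is the $cD_4$ point.
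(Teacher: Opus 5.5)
Your proposal is correct and follows essentially the same route as the paper. You get terminality of $\mathcal Y^+$ from Proposition~\ref{singX} and Lemma~\ref{terminalYplus} and transfer it to $\mathcal Y$ through the small crepant map $\chi$. You take $\QQ$-factoriality and $\rho(\mathcal Y/\P^1)=1$ from Proposition~\ref{ClY}, use $-K_{\mathcal Y}\equiv_\pi H$ for relative ampleness, and assemble the flop and the type I link from Propositions~\ref{flop}, \ref{divcontract} and \ref{factorialX}. The crepant-transfer argument you prefer is exactly the paper's. The alternative hand computation along $C_-$ is not needed; note also that the points with $r=\pm s$ are already smooth there, since the coefficient $-4e$ of $p$ does not vanish.
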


\begin{proof}
By Proposition \ref{singX}, $X$ is terminal. By Proposition
\ref{divcontract}, the morphism
$\varphi:\mathcal{Y}^{+}\rightarrow X$ is an isomorphism away from its
exceptional divisor $E=(p=0)$. Hence $\mathcal{Y}^{+}$ is terminal away
from $E$, while Lemma \ref{terminalYplus} shows that it is terminal
along $E$. Thus $\mathcal{Y}^{+}$ is terminal.

By Proposition \ref{flop}, the map
$\chi:\mathcal{Y}\dashrightarrow\mathcal{Y}^{+}$ is small. Hence it
induces an isomorphism
$\Cl(\mathcal{Y})\simeq\Cl(\mathcal{Y}^{+})$. Proposition \ref{ClY}
gives
$
\Cl(\mathcal{Y})
=\ZZ H\oplus\ZZ F.
$
The classes $H$ and $F$ are restrictions of $\QQ$-Cartier
toric divisor classes in both GIT chambers. Therefore
$\mathcal{Y}^{+}$ is $\QQ$-factorial. Moreover,
Proposition \ref{ClY} already gives that $\mathcal{Y}$ is
$\QQ$-factorial and
$\rho(\mathcal{Y}/\P^{1})=1$.

By Proposition \ref{flop}, the small birational map $\chi$ is
crepant. Since $\mathcal{Y}^{+}$ is terminal, discrepancies on a
common resolution show that $\mathcal{Y}$ is terminal as well.
Furthermore,
$
-K_{\mathcal{Y}}=F+H
$
and $-K_{\mathcal{Y}}\equiv_{\pi}H$. The class $H$ is the tautological
relatively ample class of the weighted projective bundle
$\mathcal{P}\rightarrow\P^{1}$, so
$-K_{\mathcal{Y}}$ is $\pi$-ample. By Lemma \ref{fibersintegral}, all
fibers of $\pi$ are integral, hence connected. The generic fiber is a
del Pezzo surface of degree one by Construction~\ref{Yconstruction}.
Therefore
$
\pi:\mathcal{Y}\longrightarrow\P^{1}
$
is a Mori fiber space of degree one.

Since $\chi$ is small and crepant and both
$\mathcal{Y}$ and $\mathcal{Y}^{+}$ are terminal and
$\QQ$-factorial, $\chi$ is a flop.

By Proposition \ref{divcontract},
$\varphi:\mathcal{Y}^{+}\rightarrow X$ is a divisorial contraction
whose exceptional divisor is $E=(p=0)$ and whose image is the $cD_4$
point $P\in X$. Moreover, $\varphi$ is locally the weighted blow-up of
$P$ with weights $(2,2,1,3)$ in the coordinates $(x_0,x_1,x_2,w)$ and has
discrepancy one. Finally, Proposition
\ref{factorialX} shows that $X\rightarrow\operatorname{Spec}(\C)$
is a Mori fiber space. Hence
$
X\longleftarrow\mathcal{Y}^{+}\dashrightarrow\mathcal{Y}
\longrightarrow\P^{1}
$
is a Sarkisov link of type I centered at $P$.
\end{proof}

\begin{proof}[Proof of Theorem~\ref{intro:sextic}]
For $\lambda=2$, Propositions~\ref{singX}, \ref{MFS}, and
\ref{factorialX} give terminality, factoriality, Picard rank one, the
stated singularity basket, and the Sarkisov link. By
Proposition~\ref{prop:clop-p2}, the generic fiber $S$ of
$\mathcal{Y}\to\P^1$ admits a dominant rational map
$\P^2_{\C(t)}\dashrightarrow S$ of degree nine. Spreading it over the
base and composing with the birational map to $X_2$ gives a dominant
rational map $\P^3\dashrightarrow X_2$ of degree nine.

For arbitrary $\lambda$, replace $c=2t+1$ by
$c_\lambda=\lambda t+1$ in the cubic pencil and in
\eqref{jacobianDP1}. The invariant calculation and the toric two-ray game
are polynomial in $\lambda$ and give precisely
\eqref{eq:sextic-family}. On the chart $z=1$, elimination of $y$ from the
base scheme gives
$
P_\lambda(t,x)=(t^2-1)^3x^9+t(\lambda t+1)(t^2-1)^2x^6
-t^4(t^2-1)x-t^5(t+2).
$
Take the projective closure of $P_\lambda(t,x)=0$ in a fixed product of
projective lines. After shrinking the parameter line around $2$, this is a
flat projective family. Its fiber at $2$ is geometrically integral by
Lemma~\ref{baseirr}; openness of geometric integrality therefore gives an
open neighborhood on which every fiber is geometrically integral. Hence
$P_\lambda$ is irreducible over $\C(t)$ for every constant parameter in
this neighborhood. After a further shrinking, its degree is nine and it is
separable, so the base scheme is a single Galois orbit of nine reduced
points.

Over $K=\C(t)$, the universal family of members of the cubic pencil is a
proper flat family over $\mathbb A^1_\lambda\times\P^1_{[U:V]}$. By
Lemma~\ref{admissiblePencil} and
Lemma~\ref{lem:uniform-integrality}, after shrinking around $2$ every
geometric member of every pencil is integral. Reducedness of the base
scheme and smoothness of the generic member are open conditions. Thus all
these pencils are admissible. The resulting degree-one del Pezzo surface
$S_\lambda/K$ has Picard rank one by
\cite[Corollary~3.8]{CLOP26}. Since it is geometrically rational and
$-K_{S_\lambda}$ is primitive,
$
\Pic(S_\lambda)=\ZZ[-K_{S_\lambda}].
$
The open subset obtained over $K$ is defined by finitely many nonzero
rational functions in the constant parameter $\lambda$; excluding the
common zeros of their coefficients gives a nonempty Zariski open subset of
$\mathbb A^1_\C$ containing $2$.

The global equations define a flat family
$\mathfrak Y\to\mathbb A^1_\lambda\times\P^1$. Applying
Lemma~\ref{lem:uniform-integrality} to the fiber at $2$ and shrinking once
more, every fiber of every $\mathcal Y_\lambda\to\P^1$ is geometrically
integral. The normality argument of Lemma~\ref{fibersintegral} is uniform in
this family. The intersections of the relative GIT unstable strata with
the hypersurfaces form projective families, and fiber dimension is upper
semicontinuous. Since at $\lambda=2$ the wall crossing has only curves as
exceptional loci, it remains small after shrinking. Finally, the final
hypersurfaces form a flat Gorenstein family, and terminality is open in
such a family. We therefore obtain a nonempty Zariski open subset
$U_6\subset\mathbb A^1$ containing $2$ on which all these properties hold.

The proof of Proposition~\ref{ClY} now applies to
$\mathcal Y_\lambda$ and gives
$
\Cl(\mathcal Y_\lambda)=\ZZ H\oplus\ZZ F.
$
The small wall crossing preserves the class group. The final contraction
has exceptional divisor
$
E_\lambda=\{p=0,\ v^2=4r(r^2-s^2)z^3\},
$
which is independent of $\lambda$, prime, and has class $H$. Hence
$
\Cl(X_\lambda)\cong
\frac{\ZZ H\oplus\ZZ F}{\ZZ H}\cong\ZZ.
$
The image of $L=F+2H$ is $\OO_{X_\lambda}(1)$ and generates this group.
The hypersurface avoids the singular point of $\P(1,1,1,1,3)$, so this
line bundle is Cartier. Consequently
$
\Cl(X_\lambda)=\Pic(X_\lambda)
=\ZZ[\OO_{X_\lambda}(1)],
$
and $X_\lambda$ is factorial with $\rho(X_\lambda)=1$. By adjunction it
is Fano. The CLOP map on the generic fiber has degree nine for every
$\lambda\in U_6$; spreading it over the base gives a dominant rational map
$\P^3\dashrightarrow X_\lambda$ of degree nine. This proves
unirationality and the asserted degree-one del Pezzo fibration.
\end{proof}

\section{Quartic threefolds}\label{quarticSec}
In this section we carry out an analogous construction for quartic
threefolds. Starting from an explicit one-parameter family of degree-two
del Pezzo surfaces over $\C(t)$, we compute the associated relative
Jacobians and globalize them to obtain a family of quartic threefolds.
We analyze in detail the distinguished member corresponding to
$\kappa=1$, proving that it is terminal, factorial, and unirational, and
then extend these properties to a nonempty open subset of the family.

Fix $\iota\in\C$ such that $\iota^2=-3$. For $\kappa\in\mathbb A^1$ set
\begin{equation}\label{eq:quartic-source-family}
Z_\kappa=\left\{
\omega^2=t y^4+\frac{t}{3}x^2y^2-\frac{1}{t}x^3y
-\kappa y^3z+(t^2y^2-x^2-xy)z^2+t z^4
\right\}
\subset\P_{\C(t)}(1_x,1_y,1_z,2_\omega).
\end{equation}
The invariant calculation below gives the family $X_{4,\kappa}$ of
\eqref{eq:quartic-family}. We first treat $\kappa=1$. Set $K=\C(t)$ and
write
$$
Z=\left\{
\omega^2=t y^4+\frac{t}{3}x^2y^2-\frac{1}{t}x^3y-y^3z
+(t^2y^2-x^2-xy)z^2+t z^4
\right\}
\subset\P_K(1_x,1_y,1_z,2_\omega).
$$

\begin{Lemma}\label{quarticSourceSmooth}
The surface $Z$ is a smooth del Pezzo surface of degree two.
\end{Lemma}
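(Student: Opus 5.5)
The plan follows the pattern of Proposition~\ref{prop:auxiliary}. The surface $Z$ is a quartic hypersurface in $\P_K(1,1,1,2)$ of the form $\omega^2=q_4(x,y,z)$. It is a smooth degree-two del Pezzo surface exactly when the plane quartic $\{q_4=0\}\subset\P^2_K$ is smooth, i.e.\ geometrically smooth over $\bar K$.

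First I check that the singular point $[0:0:0:1]$ of the ambient space is not on $Z$. There the equation reads $\omega^2=0$ with $\omega\neq 0$, which is impossible. Next comes quasismoothness. Setting $G=\omega^2-q_4$, the condition $G_\omega=0$ forces $\omega=0$. The remaining equations $\partial_xq_4=\partial_yq_4=\partial_zq_4=0$ then imply $q_4=0$ by Euler's identity. Hence the affine cone is smooth away from the origin if and only if the plane quartic $q_4$ has no singular point over $\bar K$. Adjunction then gives $-K_Z=\OO_Z(1)$, and $H^2=4/2=2$, exactly as in Proposition~\ref{prop:auxiliary}. So $Z$ is a del Pezzo surface of degree two once the quartic is smooth.

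The main step is to prove that $q_4$ is smooth over $K=\C(t)$. I would not analyze the singular locus symbolically in $t$; instead I would specialize. Clear the denominators by multiplying $q_4$ by $3t$. This gives a quartic $\tilde q_4\in\C[t][x,y,z]$ with the same zero locus for $t\neq0$. It suffices to find one value $t_0\in\C^*$, or even a prime $p$ and $t_0\in\mathbb F_p$ with good reduction, at which the specialized quartic is smooth. Smoothness of the fibres of a flat projective family over an open subset of $\mathbb A^1_t$ is an open condition, and the discriminant of ternary quartics is a polynomial in the coefficients. So nonvanishing at one point implies nonvanishing at the generic point, and $q_4$ is smooth over $K$.

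To certify smoothness at the chosen $t_0$, I would compute a Gröbner basis of the ideal $(\partial_x q,\partial_y q,\partial_z q)$ and show that it is irrelevant, i.e.\ that it contains a power of $(x,y,z)$. This amounts to checking on each chart $x=1$, $y=1$, $z=1$ that the ideal of partials is the unit ideal. The convenient choice is $t_0=1$ reduced modulo a small prime $p\neq2,3$; if one prime fails, I would try another. This is the one genuinely computational step, and I expect it to be the only obstacle. It should be handled the same way as the accompanying Magma checks, by showing that the relevant resultant or discriminant is a nonzero integer.
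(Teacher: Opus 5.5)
Your proposal is correct and follows essentially the same route as the paper: check that $Z$ misses $[0:0:0:1]$, reduce smoothness to smoothness of the branch quartic, and certify that smoothness at one specialization by showing the Jacobian ideal is the unit ideal on the three affine charts. That forces the discriminant over $\C(t)$ to be nonzero, and adjunction with $H^2=2$ then finishes; the paper specializes at $t=3$ over $\QQ$, while your reduction-mod-$p$ variant is also valid (for $p\neq2$ and $t_0\not\equiv0$, since the discriminant is an integral polynomial in $t$), except that if $t_0=1$ fails for every prime, the discriminant vanishes at $t=1$ and you must change $t_0$ rather than $p$.
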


\begin{proof}
It is enough to prove that the branch quartic is smooth over $K$.
Specializing $t=3$ gives
$
3y^4+x^2y^2-\frac13x^3y-y^3z+(9y^2-x^2-xy)z^2+3z^4.
$
On each of the affine charts $x=1$, $y=1$, and $z=1$, exact
Gr\"obner-basis reduction shows that the ideal generated by this equation
and its first derivatives in the two affine variables is the unit ideal.
Thus the specialized quartic is smooth. The discriminant of the branch
quartic over $\C(t)$ is therefore nonzero, so the generic branch quartic is
smooth.

The hypersurface $Z$ does not contain the singular point
$[0:0:0:1]$ of $\P(1,1,1,2)$. Hence $Z$ is smooth. Adjunction gives
$-K_Z=\OO_Z(1)$, and, if $H=\OO_Z(1)$, then
$
H^2=\frac{4}{1\cdot1\cdot1\cdot2}=2.
$
Therefore $Z$ is a del Pezzo surface of degree two.
\end{proof}

Consider the anticanonical pencil induced by the lines through
$P_0=[0:0:1]\in\P^2_K$. Writing such a line as $px+qy=0$ and setting
$x=qu$, $y=-pu$, $z=v$, its inverse image is
\begin{equation}\label{quarticBinary}
\omega^2=a u^4+p^3u^3v+c u^2v^2+t v^4,
\end{equation}
where
$
a=tp^4+\frac{t}{3}p^2q^2+\frac{1}{t}pq^3,
\, c=t^2p^2-q^2+pq.
$
The base locus is the degree-two closed point over $P_0$ defined by
$\omega^2=t$.

\begin{Lemma}\label{quarticAdmissible}
The pencil above is admissible.
\end{Lemma}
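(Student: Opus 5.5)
Admissibility has three parts: the base scheme is reduced, the generic member is smooth, and every geometric member is irreducible. I will check them in that order, following the model of Lemma~\ref{admissiblePencil} and Proposition~\ref{prop:auxiliary}.

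\emph{Base scheme.} The base scheme lies over $P_0$, where $x=y=0$, so it is cut out by $\omega^2=tz^4$. Since $t\neq0$ in $K$, this consists of two reduced geometric points, exchanged over $K(\sqrt t)$. They are conjugate, because $t$ is not a square in $K$, so the base scheme is reduced of length two.

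\emph{Smooth generic member.} For a point $[p:q]$ of the parameter line, the member is the genus-one double cover \eqref{quarticBinary}, given by the binary quartic
\[
q_{p,q}(u,v)=au^4+p^3u^3v+cu^2v^2+tv^4 .
\]
I will compute the invariants $\mathcal I$ and $\mathcal J$ with the formulas used in Proposition~\ref{prop:auxiliary}, namely
\[
\mathcal I=12at-3p^3\cdot0+c^2,\qquad
\mathcal J=72act+0-27a\cdot0-27p^6t-2c^3 .
\]
These give $\Delta=(4\mathcal I^3-\mathcal J^2)/27$ as a form in $p,q$ over $K$. It suffices to show that $\Delta$ is not identically zero, and one specialization, say $t=3$ and $[p:q]=[1:0]$, should settle this. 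Hence the generic member is smooth.

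\emph{Irreducibility of geometric members.} A member $\omega^2=q_{p,q}(u,v)$ is reducible exactly when $q_{p,q}$ is a square in $\bar K[u,v]$. The $v^4$-coefficient is $t\neq0$, so a square root must have the form $\alpha u^2+\beta uv+\gamma v^2$ with $\gamma^2=t$. Comparing the $uv^3$-coefficients gives $2\beta\gamma=0$, so $\beta=0$. The $u^3v$-coefficient is then $2\alpha\beta=0$, which forces $p^3=0$, that is $p=0$. At $p=0$ we have $a=0$ and $c=-q^2$, so the quartic is $-q^2u^2v^2+tv^4$. This is not a square, because its $u^4$-coefficient vanishes while its $u^2v^2$-coefficient $-q^2$ is nonzero; a square would then need $\alpha=0$, hence a vanishing $u^2v^2$-term. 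So every geometric member is irreducible. This argument is purely coefficient-wise and should be clean.

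The step I expect to be the main obstacle is the discriminant: confirming that $\Delta$ is a nonzero form, with care taken over the normalization of the invariants. If that specialization happens to be degenerate, I would try another rational point $(t,[p:q])$. I would also record $\Delta$ itself, since its factorization will be needed later to identify the singular fibers when globalizing the Jacobian.
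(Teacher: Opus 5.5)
Your proposal is correct and follows essentially the paper's proof. The base point over $P_0$ is reduced, irreducibility comes from the same coefficient comparison against $(\alpha u^2+\beta uv+\gamma v^2)^2$, and smoothness comes from the same specialization $t=3$, $[p:q]=[1:0]$, which with your normalization gives $\mathcal I=189$, $\mathcal J=4293$ and $\Delta=317601\neq0$, exactly the value quoted in the paper. At $p=0$, note that the vanishing of the $u^2v^2$-coefficient $\beta^2+2\alpha\gamma$ needs both $\alpha=0$ and your earlier $\beta=0$; the paper instead just factors the quartic as $v^2(tv^2-q^2u^2)$.
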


\begin{proof}
The base scheme is reduced since $t$ is not a square in $K$. To prove
geometric irreducibility, use \eqref{quarticBinary}. If $p\neq0$, the
coefficient of $u^3v$ is nonzero. A binary quartic
$
a u^4+b u^3v+c u^2v^2+e v^4
$
with $b\neq0$, no $uv^3$ term, and $e\neq0$ cannot be a square: if it
were $(\alpha u^2+\beta uv+\gamma v^2)^2$, then the vanishing of the
$uv^3$ coefficient and $\gamma\neq0$ would give $\beta=0$, contradicting
$b\neq0$. If $p=0$, the right-hand side is
$v^2(tv^2-q^2u^2)$, which is also not a square. Hence every geometric
member is integral.

The generic member is smooth because the specialization $p=1$, $q=0$,
$t=3$ gives the binary quartic
$
3u^4+u^3v+9u^2v^2+3v^4,
$
whose discriminant is $317601\neq0$. Thus the pencil is admissible.
\end{proof}

Let $S/K$ be the degree-one del Pezzo surface obtained from this pencil by
the construction of \cite[Section~3]{CLOP26}. Taking $A_Z=-K_Z$ in
\cite[Lemma~3.4]{CLOP26} gives a dominant rational map
\begin{equation}\label{quarticDegree4}
Z\dashrightarrow S
\end{equation}
of degree four.

Set
$
L=\iota\bigl(-18t^2p^2q-36pq^2+6q^3\bigr).
$
Let $X_W,Y_W$ denote the Weierstrass coordinates of the Jacobian of
\eqref{quarticBinary}. Computing its invariants and making the changes
$X_W=z+Q$ and $Y_W=w+L$, where
$Q=3t^2p^2+18pq+3q^2$, gives
\begin{equation}\label{quarticDP1eq}
\begin{aligned}
w^2+2Lw={}&z^3+(9t^2p^2+54pq+9q^2)z^2
-27p(12p^3t^2-10p^2qt^2-35pq^2-2q^3)z\\
&+(729t-2916t^4)p^6-7776t^2p^5q+2997t^2p^4q^2
+5400p^3q^3-243p^2q^4.
\end{aligned}
\end{equation}

Let $\mathcal T$ be the simplicial toric variety with Cox ring
$\C[r,s,p,q,z,w]$, irrelevant ideal $(r,s)\cap(p,q,z,w)$, and grading
$$
\begin{pmatrix}
1&1&0&1&1&1\\
0&0&1&1&2&3
\end{pmatrix},
$$
where the rows are denoted by $F,H$. Thus
$\mathcal T\to\P^1_{[r:s]}$ is a $\P(1,1,2,3)$-bundle. Set
$
\mathcal L=\iota(-18s^2p^2q-36rpq^2+6q^3).
$

\begin{Construction}\label{quarticGlobalModel}
Let $\mathcal V\subset\mathcal T$ be the hypersurface
\begin{equation}\label{quarticGlobal}
\begin{aligned}
r^2w^2+2\mathcal Lw={}&rz^3+(9s^2p^2+54rpq+9q^2)z^2-27p(12rs^2p^3-10s^2p^2q-35rpq^2-2q^3)z\\
&+(729sr^3-2916s^4)p^6-7776rs^2p^5q
+2997s^2p^4q^2+5400rp^3q^3-243p^2q^4.
\end{aligned}
\end{equation}
Every term has class $4F+6H$. Thus $\mathcal V$ is a relative Cartier
divisor. It contains no fiber of $\mathcal T\to\P^1$, so
$\mathcal V\to\P^1$ is flat. On the chart $r=1$, with $t=s$, its generic
fiber is \eqref{quarticDP1eq}, and hence is $S$.
\end{Construction}
Set
$A=F+H,\, B=F+2H,\, M=F+3H.$ Let $\mathcal T_A$ be the model corresponding to the chamber
$\langle A,B\rangle$, with irrelevant ideal
$(r,s,q)\cap(z,w,p)$, and let $\mathcal V_A$ be the transform of
$\mathcal V$. Since $4F+6H=2A+2B$, the hypersurface $\mathcal V_A$ is
ample in the projective simplicial toric fourfold $\mathcal T_A$.

\begin{Lemma}\label{quarticH2VA}
Restriction induces an isomorphism
$
H^2(\mathcal T_A,\QQ)\xrightarrow{\sim}H^2(\mathcal V_A,\QQ).
$
In particular, $H^2(\mathcal V_A,\QQ)=\QQ F\oplus\QQ H$.
\end{Lemma}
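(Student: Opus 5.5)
This is a Lefschetz hyperplane statement for a quasi-smooth ample hypersurface in a projective simplicial toric fourfold. The plan is to use the Lefschetz theorem for quasi-smooth ample hypersurfaces of simplicial toric varieties, in the version of Bott--Danilov--Batyrev--Cox. If $\mathcal V_A\subset\mathcal T_A$ is an ample and quasi-smooth hypersurface in a projective simplicial toric variety of dimension $n=4$, then restriction $H^i(\mathcal T_A,\QQ)\to H^i(\mathcal V_A,\QQ)$ is an isomorphism for $i<n-1=3$. This holds in particular for $i=2$.

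Ampleness was observed just before the statement: the class is $4F+6H=2A+2B$, which lies in the interior of the chamber $\langle A,B\rangle$ defining $\mathcal T_A$. It is therefore ample on $\mathcal T_A$.

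Since $\mathcal V_A$ is in fact singular (it is birational to a quartic with $cD_4$ and $cA_2$ points), quasi-smoothness will fail and the naive form of Lefschetz is not enough. I would instead use the rational-coefficient version valid for any ample effective Cartier divisor, or ample $\QQ$-Cartier divisor. The complement $\mathcal T_A\setminus\mathcal V_A$ is affine of dimension $4$; more precisely, it is a quotient of an affine variety by a finite group, hence affine. By Artin vanishing, together with Poincaré--Lefschetz duality for the $\QQ$-homology manifold $\mathcal T_A$ (simplicial toric varieties are rational homology manifolds), we get
\[
H^i(\mathcal T_A,\mathcal V_A;\QQ)\cong H_{8-i}(\mathcal T_A\setminus\mathcal V_A;\QQ)=0\qquad\text{for } 8-i>4,
\]
that is, for $i<4$. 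The long exact sequence of the pair then gives an isomorphism $H^i(\mathcal T_A,\QQ)\to H^i(\mathcal V_A,\QQ)$ for $i\le 2$ and an injection for $i=3$. This is the route I would take, since it requires no smoothness of $\mathcal V_A$. The duality step needs some care because $\mathcal T_A$ is only an orbifold, but rational Poincaré duality holds there. Hamm--Lê or Goresky--MacPherson stratified Morse theory provide an alternative if needed.

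Finally, $H^2$ of a projective simplicial toric variety with rational coefficients is $\Pic(\mathcal T_A)\otimes\QQ=\Cl(\mathcal T_A)\otimes\QQ$. The Cox ring has $6$ variables on a $4$-dimensional variety, so this has rank $2$ and is generated by $F$ and $H$ through the grading. Hence $H^2(\mathcal V_A,\QQ)=\QQ F\oplus\QQ H$. The main point to check is that $\mathcal V_A$ is really the full hypersurface, i.e. that it contains no component of the unstable locus. This follows from irreducibility of the defining equation, which one would check by a specialization as in Lemma~\ref{fibersintegral}.
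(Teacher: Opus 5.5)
Your argument is correct, but it does not follow the paper's route. The paper never applies a Lefschetz theorem to $\mathcal V_A$ itself. It first checks, by explicit Jacobian computations, that the singularities of $\mathcal V_A$ not inherited from $\mathcal T_A$ are isolated hypersurface singularities: two points over $Q_\pm$ and a $cD_4$ orbit on $p=0$. It then applies the Batyrev--Cox theorem to a general quasi-smooth member $\mathcal V'\in|2A+2B|$. Finally it transports $H^2$ back along the degeneration, using Mayer--Vietoris and the fact that Milnor fibers of isolated threefold hypersurface singularities are bouquets of $3$-spheres.

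You instead prove the Lefschetz statement directly for the singular hypersurface. You use three ingredients: the complement is affine, Artin (Andreotti--Frankel) vanishing holds for affine varieties, and Lefschetz duality holds in the rational homology manifold $\mathcal T_A$. This needs only ampleness of $2A+2B$ and the orbifold structure of $\mathcal T_A$. You need no singularity analysis and no control of how the degeneration behaves along the orbifold strata. Your argument also applies verbatim to every $\mathcal V_{A,\kappa}$, where the paper has to invoke openness of isolatedness, and it gives injectivity on $H^3$ as well. The paper's route, in exchange, yields in passing the local singularity information it uses elsewhere.

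Two justifications in your write-up should be corrected, though neither affects the argument. First, $\mathcal T_A\setminus\mathcal V_A$ is affine because it is the complement of the support of an ample $\QQ$-Cartier divisor: a multiple of $2A+2B$ is very ample Cartier, and a power of the equation cuts out the same set. It is not affine because it is a finite quotient of an affine variety. Second, being birational to a singular quartic does not by itself make $\mathcal V_A$ singular, although it is singular and your argument does not need this.

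Your final worry is also unnecessary. The argument applies to the zero locus of the equation whatever it is. In any case, the new locus $\{r=s=0\}\subset\mathcal T_A$ is a surface, so it cannot contain a component of the threefold.
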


\begin{proof}
Exact Jacobian calculations on local uniformizing covers show that the
singularities of $\mathcal V_A$ not inherited from the simplicial toric
ambient space are isolated hypersurface singularities. On $p\neq0$ there
are two such points, corresponding to the points $Q_\pm$ in
Proposition~\ref{quarticBasket}. On $p=0$ there is one further orbit,
$
r=q=z=p=0,\, sw\neq0.
$
On the uniformizing chart $s=\widetilde w=1$, where
$\widetilde w=\iota w$, its quadratic and cubic terms are
$
r^2+108p^2q-36q^3,
$
so it is an isolated compound $D_4$ hypersurface singularity.

Choose a general hypersurface $\mathcal V'\in|2A+2B|$. It is
quasi-smooth. Since $2A+2B$ is ample, the toric Lefschetz theorem
\cite[Proposition~10.8]{BC94} gives
$
H^2(\mathcal T_A,\QQ)\xrightarrow{\sim}H^2(\mathcal V',\QQ).
$
A deformation from $\mathcal V_A$ to $\mathcal V'$ smooths only finitely
many isolated hypersurface singularities on the local uniformizing covers.
The Milnor fiber of a three-dimensional isolated hypersurface singularity
has the homotopy type of a bouquet of three-spheres
\cite[Theorem~6.5]{Mil68}. Mayer--Vietoris therefore shows that replacing
the cone on the link by the Milnor fiber does not change $H^2$. Taking
invariants under the finite local quotient groups is exact over $\QQ$, so
the same conclusion holds on the toric orbifold. Hence
$
H^2(\mathcal V_A,\QQ)\simeq H^2(\mathcal V',\QQ).
$
Finally, $\mathcal T_A$ has Picard number two and its degree-two cohomology
is generated by $F$ and $H$.
\end{proof}

The rational map $[r:s]$ on $\mathcal V_A$ has base curve
$
C_+=\{r=s=0\}\cap\mathcal V_A\simeq\P^1.
$
Indeed, stability gives $q\neq0$, and after setting $q=1$ its equation is
$
4\iota w=3z^2+18pz-81p^2.
$
The nonzero derivative with respect to $w$ also shows that $C_+$ is
contained in the smooth locus of $\mathcal V_A$. Let
$
\beta:\mathcal W=\Bl_{C_+}(\mathcal V_A)\longrightarrow\mathcal V_A
$
be the blow-up, with exceptional divisor $E$. The Rees construction for
the ideal $(r,s)$ resolves the rational map $[r:s]$ and gives a morphism
$\pi:\mathcal W\to\P^1$. This is the common resolution of the first wall
crossing; on the original side it is the blow-up of
$
C_-=\{p=z=w=0\}\cap\mathcal V\simeq\P^1.
$
Consequently the generic fiber of $\pi$ is
\begin{equation}\label{quarticBlownFiber}
\widetilde S=\Bl_P(S),\qquad P=[0:1:0:0]\in S(K).
\end{equation}

\begin{Lemma}\label{quarticPicardZ}
One has $\rho(S)=\rho(Z)=1$.
\end{Lemma}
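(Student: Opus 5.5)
Both equalities follow from a single Galois computation on $Z_{\bar K}$. The route is to compute $\rho(Z)$ directly from the Galois action on the $56$ lines of $Z$ over $\bar K$, and then transfer the answer to $S$ through the CLOP construction. The transfer step should go as in \cite[Corollary~3.8]{CLOP26}, which relates $\Pic(S)$ to the Galois-invariant part of $\Pic(Z_{\bar K})$ orthogonal to the base scheme.

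\textbf{Step 1: $\rho(S)=\rho(Z)$.} We have $\rho(S)=1+\operatorname{rk}(K_S^\perp)^{\Gal}$, and $K_S^\perp\simeq\MW(J_{\bar K}/\P^1)$ by Section~\ref{sec:roots}. Over $L=K(\sqrt t)$ the two base points $Q_1,Q_2$ split. The relative Jacobian $J_L$ is then identified with $Y_L=\Bl_{Q_1,Q_2}Z_L$, with $Q_1$ as zero section. Under this identification $\MW(J_{\bar K})$ becomes $\langle F,C_1,C_2\rangle^\perp\subset\Pic(Y_{\bar K})$. On $Z_{\bar K}$ this lattice is
\[
\{D\in K_Z^\perp : D\ \text{orthogonal to}\ Q_2-Q_1\},
\]
that is, the orthogonal complement of the root $P=[C_2]-[C_1]$ inside $E_7$. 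It is a sublattice of rank seven in $K_Z^\perp\oplus\ZZ P$, and it is Galois-stable up to the twist by $\sigma$. The action on $K_S^\perp$ is the action on $Y$ composed with the translation $t_P$, which acts trivially modulo torsion on $\Pic^0$. So the invariant rank of $K_S^\perp$ equals that of $K_Z^\perp$, provided the $\pm$ twist on $P$ is handled; $\sigma$ sends $P$ to $-P$, so $P$ contributes no invariant. Hence $\rho(S)=\rho(Z)$, and it suffices to prove $\rho(Z)=1$. I would simply cite \cite[Corollary~3.8]{CLOP26} for this if its hypotheses (a base scheme forming one Galois orbit) match.

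\textbf{Step 2: $\rho(Z)=1$.} The plan is a specialization argument. $\Pic(Z_{\bar K})^{\Gal}\otimes\QQ$ has rank $1$ exactly when the Galois group of the $28$ bitangents of the branch quartic acts on $E_7\otimes\QQ$ without nonzero invariants. The Galois group over $\C(t)$ contains the geometric monodromy. A cleaner computable route is to reduce at a good place and use Frobenius elements, as done earlier with the factorization of $P(t,x)$ modulo $17$. Concretely:
\begin{itemize}
\item specialize $t$ to a value $t_0\in\QQ$ and reduce modulo primes $p$ of good reduction;
\item count points to obtain the characteristic polynomial of Frobenius on $\Pic(Z_{\bar{\mathbb F}_p})$, equivalently its trace, via $\#Z(\mathbb F_{p^n})=p^{2n}+p^n\operatorname{tr}(\mathrm{Frob}^n)+1$;
\item find two Frobenius elements whose fixed spaces on $E_7\otimes\QQ$ intersect trivially.
\end{itemize}
The simplest way to realize the last point is one Frobenius with no eigenvalue $1$ on $E_7$. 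Since the Galois group of the specialization embeds into the generic one (as a decomposition group), and the specialization map on Picard groups is injective and Galois-compatible, invariants over $K$ have rank at most the rank over the specialization.

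\textbf{Main obstacle.} The hard part is Step 2: exhibiting an explicit prime and specialization where Frobenius has no eigenvalue $1$ on $E_7$. This requires $\mathbb F_{p^n}$-point counts for $n\le 7$, a computer check of the same kind as those in Section~\ref{appendix:magma}. A fallback is to compute the Galois group of the bitangent (degree-$28$) resolvent of the branch quartic at $t=3$ and verify that it has no invariant vectors in $E_7$, for instance because it contains an element of order $7$ or $9$ acting fixed-point-freely.
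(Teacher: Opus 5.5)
The genuine gap is in Step~2: it controls the wrong Galois group. Over $K=\C(t)$, $\rho(Z)-1$ is the rank of the invariants on $E_7\otimes\QQ$ of the geometric monodromy group $G_{\mathrm{geom}}$, the image of $\Gal(\bar K/K)$ in $W(E_7)$.

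Your Frobenius elements come from specializing the model over $\QQ(t)$ at $t_0\in\QQ$, and so does the Galois group of the bitangents at $t=3$. They therefore lie in decomposition groups inside the arithmetic group $G_{\mathrm{arith}}$, the image of $\Gal(\overline{\QQ(t)}/\QQ(t))$. This group contains $G_{\mathrm{geom}}$ as a normal subgroup, with quotient $\Gal(k'/\QQ)$, where $k'$ is the algebraic closure of $\QQ$ in the splitting field of the $56$ lines. In general the two groups differ. Your specialization inequality thus gives $\rho(Z_{\QQ(t)})\le\rho(Z_{t_0,\mathbb F_p})$, whereas $\rho(Z_{\C(t)})\ge\rho(Z_{\QQ(t)})$, so nothing follows over $\C(t)$.

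For instance, consider a constant family $Z_0\times_{\QQ}\QQ(t)$, with $Z_0$ chosen so that some Frobenius has no eigenvalue $1$ on $E_7$. It passes your test, yet it is split over $\C(t)$ and has $\rho=8$ there.

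The fallback has further problems as stated. An element of order $7$ or $9$ in $W(E_7)$ always has eigenvalue $1$ on $E_7\otimes\QQ$, since its characteristic polynomial is $\Phi_7\Phi_1$, resp.\ $\Phi_9\Phi_1$. Also, the $28$ bitangents only see the image in $W(E_7)/\{\pm1\}$, which does not determine the invariants.

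To repair Step~2 you need elements known to lie in $G_{\mathrm{geom}}$. One option is Picard--Lefschetz reflections at values $t_0\in\C$ where the branch quartic acquires a node, checking that the vanishing roots span $E_7\otimes\QQ$. Another is Frobenius data forcing $G_{\mathrm{arith}}=W(E_7)$, combined with $G_{\mathrm{geom}}\neq1$; this suffices because every nontrivial normal subgroup of $W(E_7)$ ($\{\pm1\}$, $W(E_7)^+$, $W(E_7)$) has no invariants on $E_7\otimes\QQ$.

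The paper avoids the issue by working topologically and in the opposite order. The Lefschetz theorem for the ample hypersurface $\mathcal V_A$ gives $H^2(\mathcal V_A,\QQ)=\QQ F\oplus\QQ H$. After blowing up $C_+$ and resolving, the global invariant cycle theorem bounds the monodromy invariants of $H^2(\widetilde S,\QQ)$ by rank two. This yields $\rho(S)=1$ directly over $\C(t)$, and $\rho(Z)=\rho(S)$ then follows from \cite[Corollary~3.7]{CLOP26}.

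That last step is your Step~1, which is fine in substance, but its bookkeeping is off. The Mordell--Weil lattice is $\langle F,C_1\rangle^\perp\simeq E_8$, of rank eight, not rank seven. Inside it, $K_Z^\perp\simeq E_7$ is the orthogonal complement of the root $\phi(P)=K_Z+2C_2$, on which $\sigma$ acts by $-1$.
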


\begin{proof}
Since $C_+$ is a smooth curve in the smooth locus, the blow-up formula and
Lemma~\ref{quarticH2VA} give
$
H^2(\mathcal W,\QQ)=\QQ F\oplus\QQ H\oplus\QQ E.
$
Both $r$ and $s$ vanish to first order along $C_+$, so
$
\pi^*\OO_{\P^1}(1)=F-E.
$
Therefore $F|_{\widetilde S}=E|_{\widetilde S}$, and the image of
$H^2(\mathcal W,\QQ)\to H^2(\widetilde S,\QQ)$ is generated by
$H|_{\widetilde S}$ and $E|_{\widetilde S}$.

Choose a resolution
$\widetilde{\mathcal W}\to\mathcal W$ which is an isomorphism above an
open subset of the base on which $\pi$ is smooth. It may be chosen as a
sequence of blow-ups along smooth centers contained in the singular fibers.
The blow-up formula expresses $H^2(\widetilde{\mathcal W},\QQ)$ as the
pullback of $H^2(\mathcal W,\QQ)$ together with the classes of the
exceptional divisors. All these exceptional divisors are vertical and
restrict trivially to the generic fiber. Hence the image of
$
H^2(\widetilde{\mathcal W},\QQ)\longrightarrow
H^2(\widetilde S,\QQ)
$
is still generated by $H|_{\widetilde S}$ and $E|_{\widetilde S}$.
Applying the global invariant cycle theorem to the smooth projective family
over the smooth locus of the base shows that this image is the
monodromy-invariant part of $H^2(\widetilde S,\QQ)$
\cite[Corollary~1.40]{PS08}. Every divisor class defined over $K$ is
monodromy invariant. Thus $\rho(\widetilde S)\leq2$. The classes
$H|_{\widetilde S}$, the pullback of $-K_S$, and
$E|_{\widetilde S}$, the exceptional curve, are independent and defined
over $K$, so $\rho(\widetilde S)=2$.

By \eqref{quarticBlownFiber},
$\rho(\widetilde S)=\rho(S)+1$, and hence $\rho(S)=1$. Finally, the base
scheme of the admissible pencil on $Z$ is one Galois orbit. Therefore
\cite[Corollary~3.7]{CLOP26} gives
$
\rho(S)=\rho(Z)+1-1=\rho(Z),
$
so $\rho(Z)=1$.
\end{proof}

\begin{Lemma}\label{quarticClV}
Every fiber of $\mathcal V\to\P^1$ is geometrically integral,
$\mathcal V$ is normal, and
$
\Cl(\mathcal V)=\ZZ H\oplus\ZZ F.
$
In particular, $\mathcal V$ is $\QQ$-factorial and
$\rho(\mathcal V/\P^1)=1$.
\end{Lemma}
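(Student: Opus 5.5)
The plan mirrors Lemma~\ref{fibersintegral} and Proposition~\ref{ClY}, with $\mathcal V$ in place of $\mathcal Y$ and Lemma~\ref{quarticPicardZ} supplying the Picard rank of the generic fiber.

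\emph{Step 1: integrality of the fibers.} Each fiber of $\mathcal V\to\P^1$ is a hypersurface in $\P(1,1,2,3)$ of the form $\alpha w^2+2\mathcal L w=G_6(p,q,z)$, with $\alpha=r^2$.
\begin{itemize}
\item If $r\neq0$, complete the square in $w$. The fiber becomes $w'^2=G_6+\mathcal L^2/r^2$, and the right-hand side has a nonzero $z^3$ term.
\item A weighted cubic in $p,q,z$ is at most linear in $z$, so its square has no $z^3$ term. Hence the right-hand side is not a square, and $w'^2-(\cdots)$ is irreducible, exactly as in Lemma~\ref{fibersintegral}.
\item If $r=0$, the equation is linear in $w$: $2\mathcal L w=G_6$ with $\mathcal L=\iota(-18s^2p^2q+6q^3)\not\equiv0$. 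A polynomial $A w-B$ with $A\neq0$ is irreducible as soon as $\gcd(A,B)=1$.
\item So the only check at $r=0$ is that $q$ and $3s^2p^2-q^2$ do not divide $G_6|_{r=0}$. Setting $q=0$ leaves $rz^3+9s^2p^2z^2-324rs^2p^4z+\dots$, which at $r=0$ is $9s^2p^2z^2-2916s^4p^6\neq0$ since $s\neq0$. The quadratic factor is handled by substituting $q^2=3s^2p^2$ and seeing that a nonzero $z^2$ term survives.
\end{itemize}
This gives geometric integrality of every fiber.

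\emph{Step 2: normality.} $\mathcal V$ is a Cartier divisor in a Cohen--Macaulay simplicial toric variety, so it is $S_2$. It is $R_1$ because the generic fiber is smooth, and at the generic point of each special fiber flatness plus generic reducedness of that fiber give a DVR. Serre's criterion then gives normality.

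\emph{Step 3: class group.} Lemma~\ref{quarticPicardZ} gives $\rho(S)=1$. Since $S$ is geometrically rational, $\Pic(S)$ is torsion-free, and $-K_S$ is primitive because $K_S^2=1$. Hence $\Pic(S)=\ZZ[-K_S]=\ZZ H|_S$, using adjunction: $-K_{\mathcal V}=A|$ restricts to $H|_S$.

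Now take a prime divisor $D$. Subtract $mH$ and the divisor of a rational function so that $D$ becomes vertical. By Step 1, vertical prime divisors are whole fibers, of class $F$. So $H$ and $F$ generate $\Cl(\mathcal V)$. They are independent: $H$ has positive degree on curves in fibers, and $F$ has positive degree on horizontal curves. Both classes are $\QQ$-Cartier as restrictions of toric classes, so $\mathcal V$ is $\QQ$-factorial and $\rho(\mathcal V/\P^1)=1$.

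The main obstacle is the $r=0$ fiber in Step 1, where the quadratic term in $w$ degenerates. I would first try the coprimality check explicitly, using the displayed coefficients and specializing $s=1$.
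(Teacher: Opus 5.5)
Your proposal is correct and follows the paper's proof essentially step by step. Like the paper, you complete the square for $r\neq0$, use the linear-in-$w$ coprimality argument for $r=0$ (with coefficient $12\iota q(q^2-3s^2p^2)$), apply Serre's criterion for normality, and reduce to vertical divisors via $\Pic(S)=\ZZ[H|_S]$ from Lemma~\ref{quarticPicardZ}. Your explicit coprimality check at $r=0$, which the paper only asserts, is valid: it handles each linear factor $q\pm\sqrt3\,sp$ over $\C$ separately, because the $z^2$-coefficient $9(s^2p^2+q^2)$ of the right-hand side becomes $36s^2p^2\neq0$ on either line.
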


\begin{proof}
For $r\neq0$, completing the square in $w$ gives a double-cover equation
whose branch form has a nonzero $z^3$ term. A weighted form of degree three
in $p,q,z$, with $\deg p=\deg q=1$ and $\deg z=2$, is at most linear in
$z$, so its square has no $z^3$ term. Thus the branch form is not a square,
and the fiber is geometrically integral.

For $r=0$, equation \eqref{quarticGlobal} is linear in $w$, with
coefficient
$
12\iota q(q^2-3s^2p^2),
$
which is coprime to the remaining term. Hence this fiber is geometrically
integral as well.

The toric variety $\mathcal T$ is Cohen--Macaulay and $\mathcal V$ is an
effective Cartier divisor, so $\mathcal V$ satisfies $S_2$. The generic
fiber is smooth, and at the generic point of a special integral fiber the
one-dimensional local ring is regular by flatness over the smooth base
curve. Thus $\mathcal V$ satisfies $R_1$ and is normal.

Since $S$ is geometrically rational, $\Pic(S)$ is torsion-free. Moreover,
$-K_S$ is primitive because $K_S^2=1$. Lemma~\ref{quarticPicardZ}
therefore gives
$
\Pic(S)=\ZZ[-K_S]=\ZZ[H|_S].
$
Let $D$ be a prime Weil divisor on $\mathcal V$. There is an integer $m$
such that $D|_S\sim mH|_S$. After subtracting $mH$ and a principal
divisor, we may assume that $D$ is vertical. Every vertical prime divisor
is a fiber, since all fibers are integral, and hence has class $F$. Thus
$H,F$ generate $\Cl(\mathcal V)$. They are independent: $H$ has positive
degree on curves in a general fiber, whereas $F$ has positive degree on a
curve dominating the base. Both are restrictions of toric
$\QQ$-Cartier classes, which proves the last assertions.
\end{proof}

The rays of the Cox coordinates are ordered as
$
F,\, A,\, B,\, M,\, H.
$
Let $\mathcal T_B$ be the model corresponding to the chamber
$\langle B,M\rangle$, with irrelevant ideal
$(r,s,q,z)\cap(w,p)$, and let $\mathcal V_B$ be the transform of
$\mathcal V_A$.

\begin{Proposition}\label{quarticWalls}
The wall crossings
$
\mathcal V\dashrightarrow\mathcal V_A\dashrightarrow\mathcal V_B
$
are small. Their exceptional loci on the two sides are the curves
$$
\begin{aligned}
C_-&=\{p=z=w=0\}\subset\mathcal V,
& C_+&=\{r=s=0\}\subset\mathcal V_A,\\
D_-&=\{p=w=0,\ rz+9q^2=0\}\subset\mathcal V_A,
& D_+&=\{r=s=q=0\}\subset\mathcal V_B.
\end{aligned}
$$
All four are rational. In particular,
$
\Cl(\mathcal V_B)=\Cl(\mathcal V)=\ZZ H\oplus\ZZ F.
$
\end{Proposition}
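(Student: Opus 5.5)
The plan follows the model of Proposition~\ref{flop}, with one toric GIT wall crossing at a time. Both crossings are variations of GIT quotients of the same Cox ring, and $\mathcal V$, $\mathcal V_A$, $\mathcal V_B$ are cut out by the same Cox equation \eqref{quarticGlobal}. That equation has class $4F+6H=2A+2B$, which is homogeneous with respect to every chamber, so each birational map is the identity on Cox coordinates over the common stable locus. The exceptional locus on each side is the intersection of the hypersurface with the unstable stratum of the opposite chamber. Smallness amounts to showing that these intersections are curves.

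\emph{First wall, at $A$.} The irrelevant ideals are $(r,s)\cap(p,q,z,w)$ and $(r,s,q)\cap(z,w,p)$.
\begin{itemize}
\item On $\mathcal V$, the lost locus is $p=z=w=0$, with $q\neq0$ and $(r,s)\neq(0,0)$ forced by stability. Setting $p=z=w=0$ in \eqref{quarticGlobal} kills every term, since each term on the right carries $p$ or $z$ and the left side carries $w$. So $C_-\simeq\P^1_{[r:s]}$, in agreement with the description before Lemma~\ref{quarticPicardZ}.
\item On $\mathcal V_A$, the new locus is $r=s=0$ with $q\neq0$. This is the curve $C_+$, already shown to be $\simeq\P^1$ via $4\iota w=3z^2+18pz-81p^2$ after setting $q=1$.
\end{itemize}

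\emph{Second wall, at $B$.} The irrelevant ideals are $(r,s,q)\cap(z,w,p)$ and $(r,s,q,z)\cap(w,p)$.
\begin{itemize}
\item On $\mathcal V_A$, the lost locus is $p=w=0$ with $z\neq0$ and $(r,s,q)\neq0$. Substituting $p=w=0$ leaves $rz^3+9q^2z^2=z^2(rz+9q^2)$. Since $z\neq0$, this gives $D_-=\{p=w=0,\ rz+9q^2=0\}$. After scaling $z=1$, it is the curve $r=-9q^2$ in the coordinates $(r,s,q)$ of a weighted $\P^2$. It is parametrized rationally by $(s,q)$, hence rational.
\item On $\mathcal V_B$, the new locus is $r=s=q=0$ with $(w,p)\neq0$ and $z\neq0$. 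Substituting, every term of the right side vanishes except those with $s$, $r$ or $q$; checking term by term, all of them vanish. On the left, $r^2w^2+2\mathcal Lw$ also vanishes, since $\mathcal L$ is divisible by $q$ or $r$. So $D_+$ is the full locus $\{r=s=q=0\}$. This is a weighted $\P^1$ with coordinates $(p,w)$ after fixing $z$, hence rational.
\end{itemize}
I expect the main obstacle to be verifying that this locus is a curve and not a surface. The ambient stratum $r=s=q=0$ has dimension one after the two torus quotients: four remaining coordinates, minus two for the torus, minus one for the nonvanishing of $z$ used as a chart. So the only real check is the equation substitution, which I would do carefully.

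\emph{Conclusion.} Since both wall crossings are isomorphisms outside curves, they are small. Normality of $\mathcal V_A$ and $\mathcal V_B$ then follows as in Proposition~\ref{factorialX}: they satisfy $S_2$ as Cartier divisors in Cohen--Macaulay toric varieties, and $R_1$ is transported from $\mathcal V$ through isomorphisms in codimension one. A small birational map between normal varieties preserves class groups. Lemma~\ref{quarticClV} then gives $\Cl(\mathcal V_B)=\Cl(\mathcal V)=\ZZ H\oplus\ZZ F$.
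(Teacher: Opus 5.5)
Your proposal is correct and follows the paper's proof essentially step by step. You use the same unstable strata for both wall crossings, the same reduced equations $4\iota w=3z^2+18pz-81p^2$ and $z^2(rz+9q^2)=0$, the same observation that \eqref{quarticGlobal} vanishes identically on $\{r=s=q=0\}$, and the same $S_2$-plus-$R_1$ normality argument to transport the class group from Lemma~\ref{quarticClV}.

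One harmless slip: the stratum $\{r=s=q=0\}$ has only three remaining Cox coordinates $p,z,w$, and $z\neq0$ is an open condition, so its dimension is $3-2=1$ rather than the count $4-2-1$ you give. Your explicit identification of this locus with $\P^1_{[p:w]}$ after setting $z=1$ already settles the point.
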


\begin{proof}
For the first wall, the locus which is stable in the original chamber and
unstable in the chamber $\langle A,B\rangle$ is $p=z=w=0$.
Stability gives $q\neq0$ and $(r,s)\neq(0,0)$, so its intersection with
$\mathcal V$ is $C_-\simeq\P^1$. In the opposite direction the new locus
is $r=s=0$. Stability gives $q\neq0$ and $(p,z,w)\neq(0,0)$. Setting
$q=1$, equation \eqref{quarticGlobal} becomes
$
4\iota w=3z^2+18pz-81p^2,
$
so the quotient is $C_+\simeq\P^1$. Hence the first wall crossing is
small.

For the second wall, the locus lost from $\mathcal T_A$ is $p=w=0$.
Stability gives $z\neq0$, and equation \eqref{quarticGlobal} reduces to
$
z^2(rz+9q^2)=0.
$
Thus its intersection is $D_-$; after setting $z=1$, its quotient is
$\P(1,2)$ and hence is a rational curve. On the other side the new locus is
$r=s=q=0$. Stability gives $z\neq0$ and $(p,w)\neq(0,0)$, while the
hypersurface equation vanishes identically there. After setting $z=1$, the
residual weights of $p$ and $w$ are both one, so
$D_+\simeq\P^1$. The second wall crossing is small.

The transforms are effective Cartier divisors in Cohen--Macaulay toric
varieties, hence satisfy $S_2$. They are isomorphic to the normal variety
$\mathcal V$ outside subsets of codimension at least two, so they satisfy
$R_1$ and are normal. A small birational map between normal varieties
induces an isomorphism of class groups. Lemma~\ref{quarticClV} gives the
last assertion.
\end{proof}

We write $\mathcal V^+=\mathcal V_B$.

\begin{Proposition}\label{quarticContraction}
The toric variety $\mathcal T_B$ is the weighted blow-up
$
\Phi:\mathcal T_B\longrightarrow\P^4
$
of $P=[0:0:0:0:1]$ with weights $(3,3,2,1)$ in the affine coordinates
$(x_0,x_1,x_2,x_3)$. It is given by
\begin{equation}\label{quarticAmbientContraction}
[x_0:x_1:x_2:x_3:x_4]
=[rp^3:sp^3:qp^2:zp:w].
\end{equation}
Its restriction
$
\varphi:\mathcal V^+\longrightarrow X_4=X_{4,1}
$
is a divisorial contraction. Its exceptional divisor is
$
E=\{p=0,\ r^2w^2+12\iota q^3w-rz^3-9q^2z^2=0\},
$
and $\varphi(E)=P$.
\end{Proposition}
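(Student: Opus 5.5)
I would follow the template of Proposition~\ref{divcontract}, and first check that the monomials in \eqref{quarticAmbientContraction} define the toric morphism. In the basis $(M,H)$, or equivalently by direct computation with $M=F+3H$, the Cox degrees of $r,s,p,q,z,w$ are $F=M-3H$, $F=M-3H$, $H$, $M-2H$, $M-H$ and $M$. The monomials $rp^3$, $sp^3$, $qp^2$, $zp$ and $w$ therefore all have class $M$, and they span the degree-$M$ part of the Cox ring that is relevant to the chamber $\langle B,M\rangle$. Since $M$ lies on the boundary ray of the movable cone adjacent to this chamber, these sections define the contraction $\Phi:\mathcal T_B\to\P^4$. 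This is the final wall crossing of the toric two-ray game.

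Away from $p=0$, I would use the second torus factor to normalize $p=1$. The coordinates $r,s,q,z,w$ are then recovered from $x_0,\dots,x_4$, so $\Phi$ is an isomorphism there. On $p=0$, stability in the chamber $\langle B,M\rangle$ (irrelevant ideal $(r,s,q,z)\cap(w,p)$) forces $w\neq0$. The formula then gives $x_0=x_1=x_2=x_3=0$, so the whole divisor $\{p=0\}$ is contracted to $P=[0:0:0:0:1]$. On the chart $x_4=1$, the valuation of $p$ on $x_0,x_1,x_2,x_3$ is $(3,3,2,1)$. This identifies $\Phi$ with the weighted blow-up of $P$ with those weights. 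The same analysis as in Proposition~\ref{divcontract} applies: the exceptional divisor $\{p=0\}$ is a weighted projective space, and the residual torus acts on it with weights $(3,3,2,1)$.

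Next I would substitute $x_0=rp^3$, $x_1=sp^3$, $x_2=qp^2$, $x_3=zp$, $x_4=w$ into $f_1$ and show that
$$
f_1(rp^3,sp^3,qp^2,zp,w)=p^{k}\,G(r,s,p,q,z,w)
$$
with $-G$ equal to the equation \eqref{quarticGlobal} moved to one side. Comparing weighted degrees should give $k=6$: $x_0^2x_4^2$ becomes $r^2p^6w^2$, and $x_0x_3^3$ becomes $rp^6z^3$. This has to be checked monomial by monomial. For example, $-36\iota x_1^2x_2x_4$ gives $-36\iota s^2p^8qw$, which should match $2\mathcal Lw$ with $2\mathcal L\ni-36\iota s^2p^2q$, after dividing by $p^6$. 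The $\kappa^2$-term $-729\kappa^2x_0^3x_1$ gives $-729\,r^3sp^{12}$, which should match $729sr^3p^6$ up to the sign convention. I expect this bookkeeping to be the main obstacle, so the real work is this routine but error-prone identification. Once it holds, $\mathcal V^+$ is the strict transform of $X_{4,1}$ and $\varphi=\Phi|_{\mathcal V^+}$.

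Finally, setting $p=0$ in $G$ leaves exactly the terms of \eqref{quarticGlobal} not divisible by $p$, namely $r^2w^2+12\iota q^3w-rz^3-9q^2z^2$. This gives the stated $E$. To see that $E$ is prime, I would note that the equation is quadratic in $w$ and has discriminant $-36q^6+r^2(rz^3+9q^2z^2)$. This is not a square because of the odd power $r^3z^3$. Hence $E$ is an irreducible divisor, and $\varphi$ is divisorial with $\varphi(E)=P$.
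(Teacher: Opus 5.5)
Your proposal is correct and follows the paper's proof essentially step by step: the $(M,H)$-degrees and the five degree-$M$ monomials defining $\Phi$, normalizing $p=1$ away from the exceptional divisor, stability forcing $w\neq0$ on $\{p=0\}$, the exact identity $f_1(rp^3,sp^3,qp^2,zp,w)=p^6\mathcal F$ (it does hold term by term), and primality of $E$ via its discriminant as a quadratic in $w$. The only slip is arithmetic: since $\iota^2=-3$, the discriminant is $4(r^3z^3+9r^2q^2z^2-108q^6)=4(rz-3q^2)(rz+6q^2)^2$ rather than your expression with $-36q^6$; your non-square conclusion still holds if you phrase it as ``odd degree $3$ in $r$'' (an odd-exponent monomial alone is no obstruction), whereas the paper uses the simple factor $rz-3q^2$.
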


\begin{proof}
In the basis $(M,H)$ the degrees of $r,s,q,z,w,p$ are
$
(1,-3),\ (1,-3),\ (1,-2),\ (1,-1),\ (1,0),\ (0,1).
$
The monomials in \eqref{quarticAmbientContraction} have degrees
$(1,0),(1,0),(1,0),(1,0),(1,0)$ and generate the section ring of $M$.
They therefore define $\Phi$. Away from $p=0$, the second torus action
sets $p=1$ and recovers $r,s,q,z,w$, so $\Phi$ is an isomorphism. On
$p=0$, stability gives $w\neq0$, and the whole divisor is contracted to
$P$. On the chart $x_4=1$, the valuation of $p=0$ on
$(x_0,x_1,x_2,x_3)$ is $(3,3,2,1)$; hence $\Phi$ is the stated weighted
blow-up.

Let $f_1$ be the quartic polynomial in \eqref{eq:quartic-family} with
$\kappa=1$, and let $\mathcal F$ be the left-hand side minus the right-hand
side of \eqref{quarticGlobal}. Direct substitution gives the exact identity
\begin{equation}\label{quarticStrictTransform}
f_1(rp^3,sp^3,qp^2,zp,w)=p^6\mathcal F(r,s,p,q,z,w).
\end{equation}
Thus $\mathcal V^+$ is the strict transform of $X_4$ and
$\varphi=\Phi|_{\mathcal V^+}$. Setting $p=0$ gives the displayed equation
of $E$. As a quadratic polynomial in $w$, its discriminant is
$
4(r^3z^3+9r^2q^2z^2-108q^6)=4(rz-3q^2)(rz+6q^2)^2,
$
which is not a square in $\C(r,q,z)$. Hence $E$ is prime and the
restriction is divisorial.
\end{proof}

\begin{Proposition}\label{quarticBasket}
The singularity basket of $X_4$ is
$
cD_4+2cA_2.
$
In particular, $X_4$ is terminal and normal.
\end{Proposition}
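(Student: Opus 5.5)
We follow the pattern of Proposition~\ref{singX}. Set $f=f_1$, the quartic of \eqref{eq:quartic-family} with $\kappa=1$. The first step is to find $\Sing(X_4)$ from the Jacobian ideal $(f,f_{x_0},\dots,f_{x_4})$. This splits into the point $P=[0:0:0:0:1]$ and the chart $x_4=0$ together with the charts $x_i=1$ for $i\le 3$.

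At $P$ we work on the chart $x_4=1$. The local equation is
\[
x_0^2-36\iota x_1^2x_2-72\iota x_0x_2^2+12\iota x_2^3+\dots
\]
Its quadratic part is $x_0^2$, of rank one. We split off $x_0$ by completing the square. In the residual function of $(x_1,x_2,x_3)$, the term $-x_0x_3^3$ contributes only in order at least four after substitution. So the cubic part is
\[
-36\iota x_1^2x_2+12\iota x_2^3=12\iota x_2(x_2^2-3x_1^2),
\]
which is three distinct linear forms in $x_1,x_2$ and carries no $x_3$. This is not yet an ordinary triple point, since $x_3$ is absent. However, a general hyperplane section through $P$ has its residual cubic with three distinct factors, so by \cite[Corollary 2.6]{Pae24} the point $P$ is $cD_4$. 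We can cross-check with the weighted blow-up of Proposition~\ref{quarticContraction}, which has weights $(3,3,2,1)$ and initial form of weighted degree $6$. That gives discrepancy $3+3+2+1-1-6=2$, compatible with a terminal $cD_4$ point.

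Away from $P$, we run a Gröbner basis computation over $\QQ(\iota)$ on each affine chart. We expect it to show that the singular scheme outside $P$ is supported at exactly two points $Q_\pm$; these are the points already seen on $p\ne0$ in the proof of Lemma~\ref{quarticH2VA}. Since $\mathcal V^+\to X_4$ is an isomorphism off $E$, we may equivalently compute on $\mathcal V^+$ with $p=1$. At each $Q_\pm$ the tasks are:
\begin{enumerate}
\item compute the rank of the Hessian, which should be three;
\item split off the three nondegenerate directions by the analytic splitting lemma;
\item solve the critical equations to order two, as was done for $P_\pm$ in Proposition~\ref{singX};
\item check that the first nonzero term of the residual one-variable function has degree three.
\end{enumerate}
Together these give type $cA_2$. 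A length count of the localized Jacobian algebra (Tjurina number $2$ at each point) provides a consistency check that no other singular points exist.

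Terminality then follows because all three points are isolated cDV singularities \cite{Rei87}. Normality follows because a hypersurface is Cohen--Macaulay, and a finite singular locus gives $R_1$, so Serre's criterion applies.

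The main obstacle is the global Gröbner computation showing there are no further singularities, in particular no singular curves. The sizable integer coefficients make this heavy, and the plan is to do it exactly over $\QQ(\iota)$, as in Section~\ref{appendix:magma}. The second delicate point is the order-two elimination at $Q_\pm$, needed to certify the degree-three residual term.
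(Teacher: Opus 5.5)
Your route is the paper's own. You split off $x_0$ at $P$ and read the residual cubic $x_2(x_2^2-3x_1^2)$ to get $cD_4$, locate $Q_\pm$ on $x_0=0$ and treat them via a rank-three Hessian and a nonzero cubic term along its kernel, dispose of the remaining charts with unit-ideal Gr\"obner bases, and deduce terminality from isolated cDV. Your Serre-criterion argument for normality fills in a line the paper leaves implicit.

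The one step that does not hold as stated is the conclusion that $Q_\pm$ are of type $cA_2$; the paper's proof draws the same conclusion from the same data. Steps (i)--(iv) show that the germ at $Q_\pm$ is analytically $y_1^2+y_2^2+y_3^2+t^3=0$, which is the $A_2$ hypersurface singularity and is consistent with your Tjurina number $2$. However, a general hyperplane through $Q_\pm$ does not contain the kernel line of the rank-three quadratic form. The restricted form is therefore nondegenerate, and the general hyperplane section is an $A_1$ surface point. You say you follow Proposition~\ref{singX} and \cite[Corollary~2.6]{Pae24}, and in that convention $P_0$ is declared $cA_1$ from rank three alone. Under that convention these are $cA_1$ points; a $cA_2$ point has quadratic part of rank exactly two. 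So ``together these give type $cA_2$'' does not follow. What your computation establishes is $\Sing(X_4)=\{P,Q_+,Q_-\}$, with $P$ of type $cD_4$ and $Q_\pm$ of type $cA_1$ with analytic type $A_2$. The basket should therefore read $cD_4+2cA_1$, unless $cA_2$ is explicitly redefined to mean the analytic $A_2$ germ. The terminality and normality conclusions are unaffected.
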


\begin{proof}
Set $\widetilde x_4=\iota x_4$, so that, after multiplication by a nonzero
scalar, the equation has rational coefficients. At
$P=[0:0:0:0:1]$, the quadratic part on the chart
$\widetilde x_4=1$ is $x_0^2$. After splitting the nondegenerate
$x_0$-direction, the cubic part of the residual function is
$
-36x_2(x_2^2-3x_1^2),
$
a product of three distinct linear forms. Hence $P$ is of type $cD_4$. A standard computation on $x_0=0$ gives, besides $P$, exactly
the two points
$
Q_\pm=\left[0:\pm\frac1{\sqrt3}:1:-6:\frac{27}{4}\right]
$
in the coordinates
$[x_0:x_1:x_2:x_3:\widetilde x_4]$. At each point the Hessian has rank three. After choosing a generator of
its one-dimensional kernel, the cubic directional coefficient is nonzero
(an exact computation over $\QQ(\sqrt3)$ verifies this). By the analytic
splitting lemma, both points are of type $cA_2$.

There are no further singularities. On the chart $x_0=1$, the derivative
with respect to $\widetilde x_4$ eliminates this variable; the exact
Gr\"obner basis of the resulting Jacobian ideal in
$\QQ[x_1,x_2,x_3]$ is $\{1\}$. The charts on
$x_0=\widetilde x_4=0$ give the unit ideal as well. Therefore
$
\Sing(X_4)=\{P,Q_+,Q_-\}.
$
All three points are isolated cDV singularities. An isolated Gorenstein
threefold singularity is terminal precisely when it is cDV
\cite[Section~2.1]{Pae24}, \cite{Rei87}; hence $X_4$ is terminal.
\end{proof}

\begin{Proposition}\label{quarticFactorial}
The quartic $X_4$ is factorial and
$
\Cl(X_4)=\Pic(X_4)=\ZZ[\OO_{X_4}(1)].
$
\end{Proposition}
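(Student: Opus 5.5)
The proof will follow the sextic double solid case, Proposition~\ref{factorialX}, using the chain of small modifications and the divisorial contraction already constructed. First, by Proposition~\ref{quarticWalls}, the variety $\mathcal V^+=\mathcal V_B$ is normal. It is connected to $\mathcal V$ by small birational maps, so
\[
\Cl(\mathcal V^+)=\Cl(\mathcal V)=\ZZ H\oplus\ZZ F,
\]
where the second equality is Lemma~\ref{quarticClV}. Next, Proposition~\ref{quarticContraction} gives a divisorial contraction $\varphi:\mathcal V^+\to X_4$ with a single prime exceptional divisor $E=\{p=0\}\cap\mathcal V^+$. Since $X_4$ is normal by Proposition~\ref{quarticBasket}, the standard exact sequence
\[
\ZZ[E]\to\Cl(\mathcal V^+)\to\Cl(X_4)\to0
\]
gives $\Cl(X_4)\cong(\ZZ H\oplus\ZZ F)/\ZZ[E]$.

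The class of $E$ is the Cox degree of $p$, namely $H$, so $\Cl(X_4)\cong\ZZ$. To identify a generator, I would use that $\varphi$ is induced by the linear system of $M=F+3H$, with $\varphi^*\OO_{X_4}(1)=M$ by \eqref{quarticAmbientContraction}. The image of $M$ in $\ZZ H\oplus\ZZ F/\ZZ H\cong\ZZ F$ is $F$, which is a generator. Hence $\OO_{X_4}(1)$ generates $\Cl(X_4)$.

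It then remains to note that $\OO_{X_4}(1)$ is the restriction of the hyperplane bundle of $\P^4$, so it is Cartier. Every Weil divisor is therefore Cartier, and we get $\Cl(X_4)=\Pic(X_4)=\ZZ[\OO_{X_4}(1)]$ and factoriality.

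The only delicate step is $[E]=H$. This needs $p=0$ to cut out $E$ on $\mathcal V^+$ with multiplicity one, which holds because $E$ is prime and reduced by Proposition~\ref{quarticContraction}. One also has to be careful that the class of the toric divisor $(p=0)$ restricts to the integral class $H$ in $\Cl(\mathcal V^+)$, and is not merely a rational multiple of it. I would check this by noting that the identification of class groups in Lemma~\ref{quarticClV} was set up via Cox degrees, so the divisor of the homogeneous coordinate $p$ has degree exactly $H$.
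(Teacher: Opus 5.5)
Your proposal is correct and follows the paper's proof essentially step for step. You transport $\Cl(\mathcal V)=\ZZ H\oplus\ZZ F$ to $\mathcal V^+$ through the small wall crossings, quotient by the class $H$ of the unique contracted prime divisor $E=(p=0)$ using normality of $X_4$, note that $M=F+3H\equiv F$ modulo $H$ descends to the Cartier class $\OO_{X_4}(1)$, and conclude factoriality. Your extra justification of $[E]=H$ makes explicit a point the paper simply asserts: $p$ cuts out $E$ with multiplicity one because the equation of $E$ is irreducible.
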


\begin{proof}
By Proposition~\ref{quarticWalls},
$\Cl(\mathcal V^+)=\ZZ H\oplus\ZZ F$. Proposition~\ref{quarticContraction}
shows that the only contracted prime divisor is $E=(p=0)$, whose class is
$H$. Since $X_4$ is normal by Proposition~\ref{quarticBasket}, the exact
sequence for class groups of a birational contraction gives
$
\Cl(X_4)\simeq
\frac{\ZZ H\oplus\ZZ F}{\ZZ H}\simeq\ZZ.
$
The class $M=F+3H$ defines the contraction and descends to
$\OO_{X_4}(1)$. Modulo $H$ it agrees with $F$, so it generates
$\Cl(X_4)$. Since $\OO_{X_4}(1)$ is Cartier, $X_4$ is factorial and the
displayed equality follows.
\end{proof}

\begin{proof}[Proof of Theorem~\ref{intro:quartic}]
For $\kappa=1$, Propositions~\ref{quarticBasket} and
\ref{quarticFactorial} give terminality, the singularity basket,
factoriality, and Picard rank one. By
Proposition~\ref{prop:dp2-unirational}, the degree-two del Pezzo surface
$Z$ is $K$-unirational. Composing a dominant rational map
$\P^2_K\dashrightarrow Z$ with \eqref{quarticDegree4} shows that $S$ is
$K$-unirational. Spreading over the base gives a dominant rational map
$\P^2\times\P^1\dashrightarrow\mathcal V$, so $X_{4,1}$ is unirational.

For arbitrary $\kappa$, the pencil of lines through $[0:0:1]$ on
\eqref{eq:quartic-source-family} has binary quartic
$
\omega^2=a u^4+\kappa p^3u^3v+c u^2v^2+t v^4,
$
with the same $a,c$ as in \eqref{quarticBinary}. The invariant calculation
differs only by the term $-27\kappa^2p^6t$ in the cubic invariant. The same
changes of variables and toric contraction replace
$-729x_0^3x_1$ by $-729\kappa^2x_0^3x_1$, giving
\eqref{eq:quartic-family}.

Smoothness of $Z_\kappa$ is open in $\kappa$. Over $K=\C(t)$, apply
Lemma~\ref{lem:uniform-integrality} to the universal family of members of
the anticanonical pencil over the parameter line and its proper pencil
line. Together with openness of reducedness of the base scheme and
smoothness of the generic member, this gives, after shrinking around $1$,
a smooth degree-two del Pezzo surface with an admissible pencil for every
constant parameter under consideration. As in the sextic case, the open
conditions obtained over $K$ exclude only a proper closed set of constant
parameters.

The global equations form a flat family over
$\mathbb A^1_\kappa\times\P^1$. Lemma~\ref{lem:uniform-integrality},
applied to $\kappa=1$, makes every fiber of every global model
geometrically integral after shrinking. Isolatedness of the hypersurface
singularities on the local uniformizing covers and smoothness of the first
wall center are open. Upper semicontinuity of the dimensions of the
relative exceptional loci keeps both wall crossings small. Terminality of
the final quartic is open in this flat Gorenstein family. We obtain a
nonempty Zariski open subset $U_4\subset\mathbb A^1$ containing $1$ on
which all these properties hold.

For every $\kappa\in U_4$, the transform $\mathcal V_{A,\kappa}$ is an
ample hypersurface of class $2A+2B$ in the same simplicial toric fourfold.
The Lefschetz and smoothing argument of Lemma~\ref{quarticH2VA} gives
$
H^2(\mathcal V_{A,\kappa},\QQ)=\QQ F\oplus\QQ H.
$
Resolving the first wall crossing by blowing up its smooth rational center
and applying the argument of Lemma~\ref{quarticPicardZ} gives
$\rho(S_\kappa)=1$. Since $S_\kappa$ is geometrically rational and
$-K_{S_\kappa}$ is primitive,
$
\Pic(S_\kappa)=\ZZ[-K_{S_\kappa}].
$
The proof of Lemma~\ref{quarticClV} then gives
$
\Cl(\mathcal V_\kappa)=\ZZ H\oplus\ZZ F,
$
and the two small wall crossings preserve this class group.

The exceptional divisor of the final contraction is independent of
$\kappa$, since the parameter-dependent term is divisible by $p^6$ after
removing the common factor in \eqref{quarticStrictTransform}. Thus
$
E_\kappa=\{p=0,\ r^2w^2+12\iota q^3w-rz^3-9q^2z^2=0\}.
$
Its discriminant is
$4(rz-3q^2)(rz+6q^2)^2$, so it is prime. Therefore
$
\Cl(X_{4,\kappa})\cong
\frac{\ZZ H\oplus\ZZ F}{\ZZ H}\cong\ZZ.
$
The class $M=F+3H$ descends to the Cartier generator
$\OO_{X_{4,\kappa}}(1)$, and hence
$
\Cl(X_{4,\kappa})=\Pic(X_{4,\kappa})
=\ZZ[\OO_{X_{4,\kappa}}(1)].
$
Thus $X_{4,\kappa}$ is factorial, $\rho(X_{4,\kappa})=1$, and it is Fano
by adjunction. Finally, Proposition~\ref{prop:dp2-unirational} shows that
$Z_\kappa$ is $K$-unirational. Composing with the degree-four CLOP map and
spreading over the base proves that $X_{4,\kappa}$ is unirational and
birational to the asserted degree-one del Pezzo fibration.
\end{proof}

\section{Smooth double covers of quadrics}\label{sec:double-quadric}

We conclude by giving a relative Abel--Jacobi interpretation of the Roth--Iskovskikh construction for smooth double covers of a quadric threefold \cite{Roth50,Isk80}. The construction in \cite[Chapter~III, Section~2.3]{Isk80} associates to a point $x$ and a line through its image a second point on the corresponding genus-one curve. The following argument identifies this correspondence with the relative Abel--Jacobi map.

\begin{Lemma}\label{lem:correspondence-growth}
Let $X$ be an irreducible complex threefold, let $p:W\to X$ be a
morphism which, over a dense open subset of $X$, is birational over $X$ to
the projection $X\times\P^1\to X$, and let $r:W\dashrightarrow X$ be
dominant. For general $x\in X$, set $C_x=\overline{r(p^{-1}(x))}$. Assume that $C_x$ is a nonconstant rational curve containing $x$ and that
\begin{equation}\label{eq:no-invariant}
\{g\in\C(X)\mid p^*g=r^*g\}=\C.
\end{equation}
Then $X$ is unirational.
\end{Lemma}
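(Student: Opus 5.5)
The plan is to iterate the correspondence starting from a general point and show that after three steps the swept locus fills $X$. Since $p$ is birational over a dense open subset of $X$ to $X\times\P^1\to X$, I will replace $W$ by $X\times\P^1$ and view $r$ as a dominant rational map $R:X\times\P^1\dashrightarrow X$. For general $x\in X$ the map $s\mapsto R(x,s)$ is then a rational parametrization $\P^1\dashrightarrow C_x$, and it is nonconstant by hypothesis. I will then define the rational maps
\[
\psi_{x,1}(s_1)=R(x,s_1),\quad
\psi_{x,k}(s_1,\dots,s_k)=R\bigl(\psi_{x,k-1}(s_1,\dots,s_{k-1}),s_k\bigr),
\]
from $(\P^1)^k$ to $X$. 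These are well defined for general $x$, because each intermediate image passes through general points of $X$: this uses that $R$ is dominant, and I may shrink open sets as needed. Let $\Omega_k(x)$ be the closure of the image of $\psi_{x,k}$. It is irreducible and unirational, and it contains $\Omega_{k-1}(x)$ because $y\in C_y$ for general $y$. Hence $1=\dim\Omega_1(x)\le\dim\Omega_2(x)\le\dim\Omega_3(x)\le 3$. If $\Omega_3(x)=X$, then $\psi_{x,3}:(\P^1)^3\dashrightarrow X$ is dominant and $X$ is unirational.

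So the key step is to exclude stabilization: I claim that if $\dim\Omega_{k+1}(x)=\dim\Omega_k(x)<3$ for general $x$, then \eqref{eq:no-invariant} fails. In that case irreducibility gives $\Omega_{k+1}(x)=\Omega_k(x)$. Hence for a general point $y\in\Omega_k(x)$ the curve $C_y$ lies in $\Omega_k(x)$, and iterating gives $\Omega_k(y)\subseteq\Omega_k(x)$. Equality of dimensions then forces $\Omega_k(y)=\Omega_k(x)$. The subvarieties $\{\Omega_k(x)\}$ therefore form a family of proper subvarieties of $X$, one through each general point, and any two of them are either equal or meet only in a proper closed subset. Using the Chow variety (or Rosenlicht's theorem on rational quotients), I will obtain a dominant rational map $\Phi:X\dashrightarrow B$ with $\dim B=3-\dim\Omega_k>0$ whose general fibers are the $\Omega_k(x)$. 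For a nonconstant $h\in\C(B)$ set $g=\Phi^*h$. Since $C_x\subseteq\Omega_k(x)$, we get $g(R(x,s))=g(x)$ for general $(x,s)$, which means $r^*g=p^*g$ with $g$ nonconstant, contradicting \eqref{eq:no-invariant}. Applying the claim with $k=1$ and $k=2$ forces the dimensions to be $1,2,3$, which completes the proof.

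I expect the main obstacle to be constructing the quotient $\Phi$ rigorously, that is, passing from ``the $\Omega_k(x)$ coincide for general points of each of them'' to an honest rational fibration with a nonconstant invariant function. The first thing I would try is to take the closure of the graph $\{(x,y):y\in\Omega_k(x)\}$ in $X\times X$, check that it is generically an equivalence relation with equidimensional classes, and map $x\mapsto[\Omega_k(x)]$ into the Chow variety. Its image $B$ has positive dimension because the classes are proper subvarieties, and the constancy of the map along the curves $C_x$ gives the required invariant function. A secondary point is to track the open sets so that all the compositions defining $\psi_{x,k}$ make sense. This is handled by choosing $x$ outside countably many, in fact finitely many, proper closed subsets at each step.
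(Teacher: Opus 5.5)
Your proposal is correct and follows essentially the same route as the paper. You iterate the correspondence from a general point and rule out stabilization below dimension $3$ with a Chow-variety quotient that is constant along the curves $C_x$; this yields a nonconstant invariant function contradicting \eqref{eq:no-invariant}, so the dimensions must be $1,2,3$. The only difference is the last step: you get unirationality directly from the composed map $(\P^1)^3\dashrightarrow X$, whereas the paper argues that $p^{-1}(V_1(x))$ and $p^{-1}(V_2(x))$ are rational, invoking Castelnuovo for the unirational surface $V_2(x)$.
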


\begin{proof}
For general $x\in X$, set $V_0(x)=\{x\}$ and
$V_{i+1}(x)=\overline{r(p^{-1}(V_i(x)))}$. Since $y\in C_y$ for general $y$, we have $V_i(x)\subset V_{i+1}(x)$.

Suppose that $\dim V_i(x)=\dim V_{i+1}(x)<3$. Then $V_i(x)=V_{i+1}(x)$. After restricting to a dense open subset of $X$, the cycles $V_i(x)$ define a rational map $\psi_i:X\dashrightarrow\operatorname{Chow}(X)$. If $y\in C_x$ is general, every point reachable from $y$ in $i$ steps is reachable from $x$ in $i+1$ steps, so $V_i(y)\subset V_{i+1}(x)=V_i(x)$. The two varieties have the same dimension, hence $V_i(y)=V_i(x)$. Thus $\psi_i$ is constant along $C_x$. It is not constant on $X$, since $x\in V_i(x)$ and $\dim V_i(x)<3$. A nonconstant rational function on its image gives $g\in\C(X)\setminus\C$ with $p^*g=r^*g$, contradicting \eqref{eq:no-invariant}. Therefore
$
\dim V_1(x)=1,\, \dim V_2(x)=2,\, V_3(x)=X.
$
The curve $V_1(x)$ is rational. By the assumption on $p$,
$p^{-1}(V_1(x))$ is rational, so $V_2(x)$ is a unirational surface and
hence rational. Again $p^{-1}(V_2(x))$ is rational, and it dominates $X$
through $r$.
\end{proof}

\begin{proof}[Proof of Theorem~\ref{intro:double-quadric}]
Set $H=-K_X=\pi^*\OO_Q(1)$ and let $M=F_1(Q)\simeq\P^3$. Consider
$
W=\{(z,\ell)\in X\times M\mid\pi(z)\in\ell\},
$
with projections $p:W\to X$ and $f:W\to M$. The incidence variety of
points and lines on $Q$ is a projective-line bundle over $Q$; hence its
pullback $p:W\to X$ satisfies the hypothesis of
Lemma~\ref{lem:correspondence-growth}. For a general $\ell\in M$, the
fiber $E_\ell=f^{-1}(\ell)=\pi^{-1}(\ell)$ is a smooth genus-one curve and
$\deg H|_{E_\ell}=2$. On the generic fiber of $f$, apply the Abel--Jacobi construction to $A=2H|_{E_\ell}$. Thus
$
\Phi_A:E_\ell\longrightarrow\Pic^0(E_\ell),\,
z\longmapsto\OO_{E_\ell}(4z)\otimes A^{-1}.
$
Using the canonical action of $\Pic^0(E_\ell)$ on $\Pic^1(E_\ell)\simeq E_\ell$, set $\mu(z)=(-\Phi_A(z))\cdot z$. Then
\begin{equation}\label{eq:double-quadric-AJ}
3z+\mu(z)\sim2H|_{E_\ell}.
\end{equation}
After choosing an origin, $\mu$ is multiplication by $-3$ followed by a translation, hence has degree nine. It globalizes to a dominant rational map $\mu:W\dashrightarrow W$ over $M$. Set $r=p\circ\mu$.

We claim that \eqref{eq:no-invariant} holds. If $p^*g=r^*g$ and $h=p^*g$, then $\mu^*h=h$. Restricting to the generic fiber of $f$, a nonconstant $h$ would satisfy $\deg(h\circ\mu)=9\deg h$, a contradiction. Hence $h=f^*b$ for some $b\in\C(M)$. It follows that $b$ is constant on the curve $M_q$ of lines through every general $q\in Q$. Given two general lines $\ell_1,\ell_2\subset Q$, the quadric surface $Q\cap\langle\ell_1,\ell_2\rangle$ contains a line meeting both. Thus $b(\ell_1)=b(\ell_2)$, so $b$ and $g$ are constant.

It remains to study $C_x$ for general $x\in X$. Set $q=\pi(x)$ and let $M_q\simeq\P^1$ be the lines through $q$. A smooth model of $f^{-1}(M_q)$ is the double cover $\rho:T_x\to\mathbb F_2$ obtained by resolving the quadric cone $Q\cap T_qQ$. If $e$ and $f$ are the negative section and a fiber of $\mathbb F_2$, the branch divisor has class
$
4(e+2f)=-2K_{\mathbb F_2}.
$
For general $q$ it is smooth, so $T_x$ is a K3 surface. Since $q\notin B$, the branch divisor is disjoint from $e$ and $\rho^{-1}(e)=O\sqcup P$, where $O$ and $P$ correspond to $x$ and its conjugate point.

For general $q$, every fiber of $T_x\to\P^1$ is irreducible. Indeed, a reducible fiber contains a component mapped isomorphically onto a line of $Q$, hence a line on $X$. The lines on a smooth double quadric form a one-dimensional family \cite[Chapter~II, Proposition~3.4]{Isk80}, so their images do not cover $Q$.

Use $O$ as zero section. Since $T_x$ is a K3 surface, $\chi(\OO_{T_x})=2$, while $O\cdot P=0$. Shioda's height formula and the irreducibility of the fibers give
$
\langle P,P\rangle=4,
\,
\langle2P,2P\rangle=16=4+2(2P\cdot O),
$
so $2P\cdot O=6$. On a fiber $E_\ell$ we have $H|_{E_\ell}\sim O+P$. Setting $z=O$
in \eqref{eq:double-quadric-AJ} gives $\mu(O)=2P$. Thus $C_x$ is the
image in $X$ of the section $2P$.

It remains to prove that this image is not a point. Positivity of the
height alone is not sufficient, since the section $P$ also has positive
height but is contracted to the point conjugate to $x$. Suppose that a
section $R$ of $T_x\to M_q$ is contracted to a point $y\in X$. Then
$\pi(y)$ belongs to every line through $q$, and hence $\pi(y)=q$.
Therefore $y$ is either $x$ or its conjugate point, and $R$ is either $O$
or $P$. On the other hand,
$
\langle O,O\rangle=0,\,
\langle P,P\rangle=4,\,
\langle2P,2P\rangle=16.
$
Thus $2P$ is neither $O$ nor $P$, so its image is a nonconstant rational
curve. Moreover, $2P\cdot O=6$ shows that $2P$
meets $O$. Since $O$ is contracted to $x$, the image of $2P$ contains
$x$. Lemma~\ref{lem:correspondence-growth} now applies.
\end{proof}

\begin{Remark}
For $m\geq2$, applying the same construction to $A=mH|_{E_\ell}$ gives the fiberwise map characterized by
$
(2m-1)z+\mu_m(z)\sim mH|_{E_\ell},
$
of degree $(2m-1)^2$. This is precisely the point $x'$ in the Roth--Iskovskikh construction \cite[Chapter~III, Section~2.3]{Isk80}; the proof above uses $m=2$.
\end{Remark}

\section{Computational verification}\label{appendix:magma}

The computational supplement consists of the files
\path{Parametrize_smooth_V1.m}, \path{X6_random.m}, and
\path{Exact_certificates.py}. The source archive distributed with this
manuscript contains the versions used here; the public repository is
\begin{center}
	\begin{small}
	\url{https://github.com/msslxa/Unirational_Threefolds}
	\end{small}
\end{center}
The first script uses Magma \cite{BosmaCannonPlayoust97} and performs exact
symbolic computations over $\QQ$. It verifies the smoothness of the sextic
threefold \eqref{eq:explicit-X}, implements the two successive Manin
constructions of Section~\ref{sec:computations}, checks the identities on
the auxiliary degree-two del Pezzo surface \eqref{eq:auxiliary}, computes
the Jacobian of the genus-one pencil, and constructs the rational map
\eqref{eq:parametrization}. It also verifies its dominance.

The second Magma script gives an independent check of the explicit
parametrization. It evaluates the formulas at rational triples $(t,m,r)$
for which all denominators are nonzero, verifies that the resulting points
lie on \eqref{eq:explicit-X}, and computes the differential using dual
numbers. By default it performs $10^4$ valid tests with parameters of
absolute value at most $100$. These random tests are supplementary; the
proofs in the text use exact identities and an exact nonzero Jacobian
minor.

The Python file \path{Exact_certificates.py} contains only exact integer,
rational, and finite-field computations. It verifies the rank-$51$
tangent-space certificate in Proposition~\ref{prop:family-dimension}, the
resultant and squarefreeness calculation in Lemma~\ref{admissiblePencil},
the Gr\"obner and Hessian certificates for the twelve nodes of the sextic
double solid, the smoothness of the quartic branch surface used in
Lemma~\ref{quarticSourceSmooth}, the homogenized quartic identity
\eqref{quarticStrictTransform}, and the singularity calculations in
Proposition~\ref{quarticBasket}.

\bibliographystyle{amsalpha}
\bibliography{Biblio}

\end{document}